\documentclass[11pt, twoside, leqno]{article}

\usepackage{amssymb}
\usepackage{amsmath}
\usepackage{amsthm}
\usepackage{amsfonts}
\usepackage{mathrsfs}
\usepackage{color}

\allowdisplaybreaks

\pagestyle{myheadings}\markboth{\footnotesize\rm\sc
Ciqiang Zhuo and Dachun Yang}
{\footnotesize\rm\sc Maximal Function Characterizations}

\textwidth=15cm
\textheight=21.146cm
\oddsidemargin 0.35cm
\evensidemargin 0.35cm

\parindent=13pt

\def\rr{{\mathbb R}}
\def\rn{{\mathbb{R}^n}}
\def\urn{\mathbb{R}_+^{n+1}}
\def\zz{{\mathbb Z}}
\def\cc{{\mathbb C}}

\def\nn{{\mathbb N}}
\def\hh{{\mathbb H}}

\def\ca{{\mathcal A}}
\def\cb{{\mathcal B}}

\def\cd{{\mathcal D}}

\def\cf{{\mathcal F}}
\def\cg{{\mathcal G}}

\def\cm{{\mathcal M}}
\def\cn{{\mathcal N}}
\def\cp{{\mathcal P}}

\def\cs{{\mathcal S}}
\def\ct{{\mathcal T}}

\def\fz{\infty }
\def\az{\alpha}

\def\ez{\epsilon}
\def\gz{{\gamma}}

\def\lz{\lambda}

\def\vz{\varphi}

\def\lf{\left}
\def\r{\right}

\def\hs{\hspace{0.25cm}}
\def\ls{\lesssim}

\def\noz{\nonumber}
\def\wz{\widetilde}
\def\wh{\widehat}

\def\st{\subset}
\def\com{\complement}

\def\loc{{\mathop\mathrm{\,loc\,}}}
\def\supp{\mathop\mathrm{\,supp\,}}

\def\rad{\mathop\mathrm{\,rad\,}}

\def\vlp{{L^{p(\cdot)}(\rn)}}
\def\vhs{H^{p(\cdot)}(\rn)}

\def\hlp{H_L^{p(\cdot)}(\rn)}

\newtheorem{theorem}{Theorem}[section]
\newtheorem{lemma}[theorem]{Lemma}
\newtheorem{corollary}[theorem]{Corollary}
\newtheorem{proposition}[theorem]{Proposition}

\theoremstyle{definition}

\newtheorem{remark}[theorem]{Remark}
\newtheorem{definition}[theorem]{Definition}
\newtheorem{assumption}[theorem]{Assumption}

\renewcommand{\appendix}{\par
   \setcounter{section}{0}%
   \setcounter{subsection}{0}%
   \setcounter{subsubsection}{0}%
   \gdef\thesection{\@Alph\c@section}%
   \gdef\thesubsection{\@Alph\c@section.\@arabic\c@subsection}%
   \gdef\theHsection{\@Alph\c@section.}%
   \gdef\theHsubsection{\@Alph\c@section.\@arabic\c@subsection}%
   \csname appendixmore\endcsname
 }

\numberwithin{equation}{section}

\begin{document}

\arraycolsep=1pt

\title{\bf\Large Maximal Function Characterizations of
Variable Hardy Spaces Associated with Non-negative Self-adjoint Operators
Satisfying Gaussian Estimates
\footnotetext{\hspace{-0.35cm} 2010 {\it
Mathematics Subject Classification}. Primary 42B25;
Secondary 42B30, 42B35, 35K08.
\endgraf {\it Key words and phrases.} non-negative self-adjoint operator,
Gaussian upper bound estimate, variable exponent Hardy space, atom,  non-tangential maximal function,
radial maximal function.
\endgraf This project is supported by the National
Natural Science Foundation of China
(Grant Nos.~11571039 and 11361020) and
the Specialized Research Fund for the Doctoral Program of Higher Education
of China (Grant No. 20120003110003). }}
\author{Ciqiang Zhuo and Dachun Yang\,\footnote{Corresponding author}
}
\date{}
\maketitle

\vspace{-0.8cm}

\begin{center}
\begin{minipage}{13cm}
{\small {\textbf{ Abstract}}\quad
Let $p(\cdot):\ \mathbb R^n\to(0,1]$ be a variable exponent function
satisfying the globally $\log$-H\"older continuous condition and $L$ a non-negative
self-adjoint operator on $L^2(\mathbb R^n)$ whose heat kernels
satisfying the Gaussian upper bound estimates.
Let $H_L^{p(\cdot)}(\mathbb R^n)$ be the variable exponent Hardy space
defined via the Lusin area function associated with the heat kernels $\{e^{-t^2L}\}_{t\in (0,\infty)}$.
In this article, the authors first establish the atomic characterization of $H_L^{p(\cdot)}(\mathbb R^n)$;
using this, the authors then obtain its non-tangential maximal
function characterization which, when $p(\cdot)$ is a constant in $(0,1]$, coincides
with a recent result by Song and Yan [Adv. Math. 287 (2016), 463-484]
and further induces the radial maximal function characterization of $H_L^{p(\cdot)}(\mathbb R^n)$
under an additional assumption that the heat kernels of $L$ have the H\"older regularity.}
\end{minipage}
\end{center}

\section{Introduction\label{s1}}
\hskip\parindent
The main purpose of this article is to establish the non-tangential or radial
maximal function characterizations of the Hardy space $H_L^{p(\cdot)}(\rn)$
introduced in \cite{yz15}.
Recall that the theory of classical Hardy spaces on the Euclidean space $\rn$ was introduced and
developed in the 1960s and 1970s. Precisely, the real-variable theory of Hardy spaces
on $\rn$ was initiated by Stein and Weiss \cite{sw60} and then systematically
developed by Fefferman and Stein \cite{fs72}, which
has played an important role in modern harmonic analysis
and been widely used in partial differential equations
(see, for example, \cite{CoWe77,fs72,st93}). As was well known,
the classical Hardy space is intimately connected with the Laplace operator
$\Delta:=-\sum_{i=1}^n\partial_{x_i}^2$ on $\rn$. Indeed, for $p\in(0,1]$,
the Hardy space $H^p(\rn)$ consists of all $f\in\cs'(\rn)$ (the set of all
\emph{tempered distributions}) such that the area integral function
$$S(f)(\cdot):=\lf\{\int_0^\fz\int_{|y-\cdot|<t}\lf|t^2\Delta e^{-t^2\Delta}(f)(y)\r|^2
\,\frac{dydt}{t^{n+1}}\r\}^{\frac12}$$
belongs to $L^p(\rn)$. Moreover, for $p\in(0,1]$, the Hardy space $H^p(\rn)$ involves several
different equivalent characterizations, for example, if $f\in \cs'(\rn)$, then
\begin{align*}
f\in H^p(\rn)
&\Longleftrightarrow\sup_{t\in(0,\fz)}\lf|e^{-t^2\Delta}(f)\r|\in L^p(\rn)\\
&\Longleftrightarrow \sup_{t\in(0,\fz),\,|y-\cdot|<t}
\lf|e^{-t^2\Delta}(f)(y)\r|\in L^p(\rn).
\end{align*}

Also, it is well known that the Hardy space $H^p(\rn)$, with $p\in(0,1]$, is a suitable
substitute of the Lebesgue space $L^p(\rn)$, for example, the classical Riesz transform
is bounded on $H^p(\rn)$, but not on $L^p(\rn)$ when $p\in(0,1]$. However, in many situations,
the standard theory of Hardy spaces is not applicable,
for example, the Riesz transform $\nabla L^{-1/2}$ may not be bounded from the Hardy
space $H^1(\rn)$ to $L^1(\rn)$ when $L$ is a second-order divergence form elliptic
operator with complex bounded measurable coefficients (see \cite{hm09}).
Motivated by this, the topic for developing a real-variable theory of Hardy spaces that are adapted
to different differential operators has inspired great interests in the last decade
and has become a very active research topic in harmonic analysis (see, for example,
\cite{admun,bckyy13a,dy05,dy05cpam,dz02,hlmmy,hm09,jy10,jyz09,yan08,yys4,yz15}).

Particularly, let $L$ be a linear operator on $L^2(\mathbb R^n)$ and
generate an analytic semigroup $\{e^{-tL}\}_{t>0}$ with heat kernels having
pointwise upper bounds.
Then, by using the Lusin area function associated with these heat kernels,
Auscher, Duong and McIntosh \cite{admun} initially studied
the Hardy space $H_L^1(\rn)$ associated with the operator $L$.
Based on this, Duong and Yan \cite{dy05,dy05cpam}
introduced the BMO-type space BMO$_L(\rn)$ associated with $L$
and proved that the dual space of $H_L^1(\rn)$ is just BMO$_{L^\ast}(\rn)$,
where $L^\ast$ denotes the \emph{adjoint operator} of $L$ in $L^2(\rn)$.
Later, Yan \cite{yan08} further generalized these results to the Hardy spaces
$H_L^p(\rn)$ with $p$ close to, but less than, 1 and, more generally, the
Orlicz-Hardy space associated with such operator was investigated by Jiang et al.
\cite{jyz09}. Very recently, under the assumption that
$L$ is a non-negative self-adjoint operator whose heat kernels satisfying Gaussian upper bound estimates,
Song and Yan \cite{sy15} established a characterization of Hardy spaces $H_L^p(\rn)$
via the non-tangential maximal function associated with the heat semigroup of $L$ based on a
subtle modification of technique due to Calder\'on \cite{c77},
which was further generalized into the Musielak-Orlicz-Hardy space in \cite{YYs15}.

Another research direction of generalized Hardy spaces
is the variable exponent Hardy space, which also extends the variable Lebesgue
space. Recall that the variable Lebesgue space $L^{p(\cdot)}(\rn)$, with a variable exponent
$p(\cdot):\ \rn\to(0,\fz)$, consists of all measurable functions $f$ such
that $\int_\rn |f(x)|^{p(x)}\,dx<\fz$. The study of
variable Lebesgue spaces can be traced back to
Birnbaum-Orlicz \cite{bo31} and Orlicz \cite{ol32}, but the modern development
started with the article \cite{kr91} of Kov\'a\v{c}ik
and R\'akosn\'{\i}k as well as \cite{cruz03} of
Cruz-Uribe and \cite{din04} of Diening, and nowadays have been widely used in
harmonic analysis (see, for example, \cite{cfbook,{dhr11}}).
Moreover, variable function spaces also have interesting applications in
fluid dynamics \cite{am02,{rm00}}, image processing \cite{clr06},
partial differential equations and variational calculus
\cite{am05,hhl08,su09}.
Recall that the variable exponent Hardy space $\vhs$ was introduced
by Nakai and Sawano \cite{ns12} and, independently, by Cruz-Uribe and Wang \cite{cw14}
with some weaker assumptions on $p(\cdot)$ than those used in \cite{ns12}, which
was further investigated by Sawano \cite{Sa13}, Zhuo et al. \cite{zyl14} and Yang et al. \cite{yzn16}.

Let $p(\cdot):\ \mathbb R^n\to(0,1]$ be a variable exponent function
satisfying the globally $\log$-H\"older continuous condition.
Very recently, the authors \cite{yz15} introduced the Hardy space
$H_L^{p(\cdot)}(\rn)$ via the Lusin area function associated with
a linear operator $L$ on $L^2(\mathbb R^n)$ whose heat kernels having pointwise
upper bound, and obtained their molecular characterizations.
In this article, we aim at establishing equivalent characterizations
of $H_L^{p(\cdot)}(\rn)$, under the additional assumption that $L$ is a non-negative
self-adjoint operator, in terms of maximal functions, including (grand)
non-tangential maximal functions and (grand) radial maximal function.
To this end, we first introduce the space $H_{L,{\rm at},M}^{p(\cdot),q}(\rn)$,
the variable atomic Hardy space associated with the operator $L$
(see Definition \ref{d-atom} below),
and then prove that $H_L^{p(\cdot)}(\rn)$ and $H_{L,{\rm at},M}^{p(\cdot),q}(\rn)$
coincide with equivalent quasi-norms (see Theorem \ref{t-atom-c} below).
Based on the results from Song and Yan \cite{sy15},
we characterize $H_{L,{\rm at},M}^{p(\cdot),q}(\rn)$ via (grand) non-tangential maximal
functions in Theorem \ref{t-max-c} below, from which, we further deduce
the (grand) radial maximal function characterizations of
$H_{L,{\rm at},M}^{p(\cdot),q}(\rn)$
in Theorem \ref{t-max-rad} below under an additional H\"older continuous assumption
on the heat kernels of $L$ (see \eqref{11-25} below). Using Theorems \ref{t-atom-c},
\ref{t-max-c} and \ref{t-max-rad}, we also obtain the corresponding characterizations
of $H_L^{p(\cdot)}(\rn)$, respectively, in terms of atoms, the (grand) non-tangential maximal functions
and the (grand) radial maximal functions (see Corollary \ref{c-max} below).

To state the results of this article, we begin with some notation and notions.
A measurable function $p(\cdot):\ \rn\to(0,\fz)$ is called a
\emph{variable exponent}. For any variable exponent $p(\cdot)$, let
\begin{equation}\label{2.1x}
p_-:=\mathop{\rm ess\,inf}\limits_{x\in \rn}p(x)
\quad {\rm and}\quad
p_+:=\mathop{\rm ess\,sup}\limits_{x\in \rn}p(x).
\end{equation}
Denote by $\cp(\rn)$ the \emph{collection of all variable exponents}
$p(\cdot)$ \emph{satisfying} $0<p_-\le p_+<\fz$.

For a given variable exponent $p(\cdot)\in\cp(\rn)$,
the \emph{modular} $\varrho_{p(\cdot)}$, associated with $p(\cdot)$, is defined by setting
$\varrho_{p(\cdot)}(f):=\int_\rn|f(x)|^{p(x)}\,dx$ for any measurable function $f$ and the
\emph{Luxemburg (quasi-)norm} of $f$ is given by
\begin{equation*}
\|f\|_{\vlp}:=\inf\lf\{\lz\in(0,\fz):\ \varrho_{p(\cdot)}(f/\lz)\le1\r\}.
\end{equation*}
Then the \emph{variable exponent Lebesgue space} $\vlp$ is defined to be the
set of all measurable functions $f$ such that $\varrho_{p(\cdot)}(f)<\fz$,
equipped with the quasi-norm $\|f\|_{\vlp}$. For more properties on the variable
exponent Lebesgue spaces, we refer the reader to \cite{cfbook,dhr11}.

\begin{remark}\label{r-vlp}
Let $p(\cdot)\in\cp(\rn)$. Then, for all $f,\ g\in\vlp$,
$$\|f+g\|_{\vlp}^{\underline{p}}
\le \|f\|_{\vlp}^{\underline{p}}+\|g\|_{\vlp}^{\underline{p}},$$
where $\underline{p}:=\min\{1,p_-\}$,
and, for all $\lz\in\cc$, $\|\lz f\|_{\vlp}=|\lz|\|f\|_{\vlp}$.
In particular, when $p_-\in[1,\fz)$, $\vlp$ is a Banach space
(see \cite[Theorem 3.2.7]{dhr11}).
\end{remark}

In the present article, wa always assume that the variable exponent
$p(\cdot)$ satisfies the \emph{globally {\rm log}-H\"older continuous condition}.
Recall that a measurable function $p(\cdot)$ is said to
satisfy the globally {\rm log}-H\"older continuous condition, denoted by
$p(\cdot)\in  C^{\log}(\rn)$,
if there exists a positive constant $C_{\log}(p)$ such that, for all $x,\ y\in\rn$,
\begin{equation*}
|p(x)-p(y)|\le \frac{C_{\log}(p)}{\log(e+1/|x-y|)},
\end{equation*}
and there exist a positive constant
$C_\fz$ and a constant $p_\fz\in\rr$
such that, for all $x\in\rn$,
\begin{equation*}
|p(x)-p_\fz|\le \frac{C_\fz}{\log(e+|x|)}.
\end{equation*}

In what follows, for any $r\in(0,\fz)$ and measurable set $E\st \rn$,
denote by $L^r(E)$ the set of all measurable functions $f$ such that
$\|f\|_{L^r(E)}:=\{\int_E|f(x)|^r\,dx\}^{1/r}<\fz.$

In this article, unless otherwise stated, we always assume that
$L$ is a densely defined linear operator on $L^2(\rn)$ and satisfies
the following assumptions:
\begin{assumption}\label{a-a1}
$L$ is non-negative and self-adjoint;
\end{assumption}
\begin{assumption}\label{a-a2}
The kernels of the semigroup $\{e^{-tL}\}_{t>0}$, denoted by
$\{K_t\}_{t>0}$, are measurable functions on $\rn\times\rn$ and satisfy the Gaussian
upper bound estimates, namely, there exist positive constants $C$ and $c$ such that,
for all $t\in(0,\fz)$ and $x,\,y\in\rn$,
\begin{equation*}
|K_t(x,y)|\le \frac C{t^{n/2}}\exp\lf\{-\frac{|x-y|^2}{ct}\r\}.
\end{equation*}
\end{assumption}

\begin{remark}
(i) One of the typical example of operators $L$ satisfying both Assumptions \eqref{a-a1}
and \ref{a-a2} is the Schr\"odinger operator $L:=-\Delta+V$ with $0\le V\in L_{\loc}^1(\rn)$.

(ii) If $\{e^{-tL}\}_{t>0}$ is a bounded analytic semigroup on $L^2(\rn)$ whose
kernels $\{K_t\}_{t>0}$ satisfy Assumptions \ref{a-a1} and \ref{a-a2}, then, for
any $j\in\nn:=\{1,2,\dots\}$, there exists a positive constant $C$ such that, for all $t\in(0,\fz)$
and almost every $x,\ y\in\rn$,
\begin{equation}\label{partial1}
\lf|t^j\frac{\partial^jK_t(x,y)}{\partial t^j}\r|\le \frac C{t^{n/2}}
\exp\lf\{-\frac{|x-y|^2}{ct}\r\};
\end{equation}
see, for example, \cite[p.\,4386]{yan08}.
\end{remark}
For all functions $f\in L^2(\rn)$, define the \emph{Lusin area function} $S_L(f)$
associated with the operator $L$ by setting, for all $x\in\rn$,
\begin{equation*}
S_L(f)(x):=\lf\{\int_{\Gamma(x)}\lf|t^2Le^{-t^2L}(f)(y)\r|^2\,
\frac{dy\,dt}{t^{n+1}}\r\}^{1/2},
\end{equation*}
here and hereafter, for all $x\in\rn$,
\begin{equation}\label{1.2x}
\Gamma(x):=\{(y,t)\in\rn\times(0,\fz):\ |y-x|<t\}.
\end{equation}
In \cite{admun}, Auscher et al. proved that, for any $p\in(1,\fz)$,
there exists a positive
constant $C$ such that, for all $f\in L^p(\rn)$,
\begin{equation}\label{SL-bounded}
C^{-1}\|f\|_{L^p(\rn)}\le \|S_L(f)\|_{L^p(\rn)}\le C\|f\|_{L^p(\rn)};
\end{equation}
see also Duong and McIntosh \cite{dm99} and Yan \cite{yan04}.

We now recall the definition of the variable exponent Hardy space associated with operator,
which was first studied in \cite{yz15}.

\begin{definition}\label{d-hardys}
Let $p(\cdot)\in C^{\log}(\rn)$ with $p_+\in(0,1]$ and
$L$ be an operator satisfying Assumptions \ref{a-a1} and \ref{a-a2}.
A function $f\in L^2(\rn)$ is said to be in $\hh_L^{p(\cdot)}(\rn)$ if
$S_L(f)\in L^{p(\cdot)}(\rn)$; moreover, define
$$\|f\|_{H_L^{p(\cdot)}(\rn)}
:=\|S_L(f)\|_{L^{p(\cdot)}(\rn)}
:=\inf\lf\{\lz\in(0,\fz):\ \int_\rn\lf[\frac{S_L(f)(x)}{\lz}\r]^{p(x)}\,dx\le1\r\}.$$
Then the \emph{variable Hardy space associated with operator} $L$,
denoted by $H_L^{p(\cdot)}(\rn)$,
is defined to be the completion of $\hh_L^{p(\cdot)}(\rn)$ in the quasi-norm
$\|\cdot\|_{\hlp}$.
\end{definition}

Next we introduce the notions of the $(p(\cdot),q,M)_L$-atom and the atomic variable exponent Hardy
space $H_{L,{\rm at},M}^{p(\cdot),q}(\rn)$.

\begin{definition}\label{d-atom}
Let $L$ and $p(\cdot)$ be as in Definition \ref{d-hardys}, $q\in(1,\fz]$ and $M\in\nn$.

(I) Let $\cd(L^M)$ be the domain of $L^M$ and $Q\st\rn$ a cube.
A function $\alpha\in L^q(\rn)$ is called
a \emph{$(p(\cdot),q,M)_L$-atom} associated with the cube $Q$ if there exists
a function $b\in\cd(L^M)$ such that
\begin{enumerate}
\item[(i)] $\alpha=L^Mb$ and, for all $j\in\{0,\,1,\,\dots,\,M\}$,
$\supp(L^jb)\st Q$;

\item[(ii)] for all $j\in\{0,\,1,\,\dots,\,M\}$, $\|([\ell(Q)]^2L)^jb\|_{L^q(\rn)}
\le [\ell(Q)]^{2M}|Q|^{1/q}\|\chi_Q\|_{\vlp}^{-1}$, where $\ell(Q)$ denotes
the \emph{side length} of $Q$.
\end{enumerate}

(II) Let $f\in L^2(\rn)$. Then
\begin{equation}\label{atom-re}
f=\sum_{j\in\nn}\lz_j\alpha_j
\end{equation}
is called an atomic \emph{$(p(\cdot),q,M)_L$-representation of $f$} if
the sequences $\{\lz_j\}_{j\in\nn}\st\cc$ and $\{\alpha_j\}_{j\in\nn}$ are
$(p(\cdot),q,M)_L$-atoms associated with cubes $\{Q_j\}_{j\in\nn}\st\rn$
such that \eqref{atom-re} converges in $L^2(\rn)$ and
$$\int_\rn\lf\{\sum_{j\in\nn}\lf[\frac{|\lz_j|\chi_{Q_j}(x)}{\|\chi_{Q_j}\|_{\vlp}}
\r]^{p_-}\r\}^{p(x)/p_-}\,dx<\fz.$$

Let
$$\mathbb H_{L,{\rm at},M}^{p(\cdot),q}(\rn):=\lf\{f\in L^2(\rn):\
f\ \text{has an atomic}\ (p(\cdot),q,M)_L\text{-representation}\r\}$$
equipped with the quasi-norm $\|f\|_{{H}_{L,{\rm at},M}^{p(\cdot),q}(\rn)}$
given by
$$\inf\lf\{\cb(\{\lz_j\alpha_j\}_{j\in\nn}):\ \sum_{j\in\nn}\lz_j\alpha_j\
\text{is an atomic}\ (p(\cdot),q,M)_L \text{-representation of}\ f\r\},$$
where
\begin{equation*}
\cb(\{\lz_j\alpha_j\}_{j\in\nn})
:=\lf\|\lf\{\sum_{j\in\nn}\lf[\frac{|\lz_j|\chi_{Q_j}}{\|\chi_{Q_j}\|_{\vlp}}
\r]^{p_-}\r\}^{1/p_-}\r\|_{\vlp}
\end{equation*}
and the infimum is taken over all the atomic $(p(\cdot),q,M)_L$-representations
of $f$ as above.

The \emph{atomic variable exponent Hardy space} $H_{L,{\rm at},M}^{p(\cdot),q}(\rn)$
is then defined to be the completion of the set
$\mathbb H_{L,{\rm at},M}^{p(\cdot),q}(\rn)$ with respect to the quasi-norm
$\|\cdot\|_{H_{L,{\rm at},M}^{p(\cdot),q}(\rn)}$.
\end{definition}

\begin{remark}\label{r-atom}
It is easy to see that, for any $q\in(1,\fz)$ and $M\in\nn$,
$$H_{L,{\rm at},M}^{p(\cdot),\fz}(\rn)\st H_{L,{\rm at},M}^{p(\cdot),q}(\rn).$$
\end{remark}

The first main result of this article is stated as follows, which, in the case that
$p(\cdot)\equiv {\rm constant}\in(0,1]$, was established in \cite{dl13,hlmmy}
(see also \cite{jy11}).

\begin{theorem}\label{t-atom-c}
Let $p(\cdot)\in C^{\log}(\rn)$ with $p_+\in(0,1]$, $q\in(1,\fz]$,
$M\in(\frac n{2}[\frac1{p_-}-1],\fz)\cap\nn$ and $L$ be a linear operator on $L^2(\rn)$
satisfying Assumptions \ref{a-a1} and \ref{a-a2}.
Then $H_{L,{\rm at},M}^{p(\cdot),q}(\rn)$ and $H_L^{p(\cdot)}(\rn)$ coincide
with equivalent quasi-norms.
\end{theorem}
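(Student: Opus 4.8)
The plan is to establish two quasi-norm inequalities and then pass to completions. By Remark~\ref{r-atom} it suffices to prove the following: (a) for every $q\in(1,\fz]$, the inclusion $\mathbb H_{L,{\rm at},M}^{p(\cdot),q}(\rn)\st\hh_L^{p(\cdot)}(\rn)$ holds with $\|f\|_{\hlp}\ls\|f\|_{H_{L,{\rm at},M}^{p(\cdot),q}(\rn)}$; and (b) every $f\in\hh_L^{p(\cdot)}(\rn)$ admits an atomic $(p(\cdot),\fz,M)_L$-representation $f=\sum_j\lz_j\az_j$ with $\cb(\{\lz_j\az_j\}_j)\ls\|f\|_{\hlp}$. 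Granting these, (b) gives $\hh_L^{p(\cdot)}(\rn)\st\mathbb H_{L,{\rm at},M}^{p(\cdot),\fz}(\rn)$, which by Remark~\ref{r-atom} embeds into $\mathbb H_{L,{\rm at},M}^{p(\cdot),q}(\rn)$, which by (a) embeds back into $\hh_L^{p(\cdot)}(\rn)$; hence for each $q\in(1,\fz]$ these dense subspaces coincide as sets with mutually equivalent quasi-norms, and therefore so do their completions $H_{L,{\rm at},M}^{p(\cdot),q}(\rn)$ and $\hlp$.

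For (a), the quickest route is to note that every $(p(\cdot),q,M)_L$-atom is automatically a $(p(\cdot),q,M,\ez)_L$-molecule (for any admissible $\ez$, since its compact support annihilates all tail terms), so (a) is already contained in the molecular characterization of $\hlp$ proved in \cite{yz15}. A self-contained argument would instead estimate $S_L(\az)$ directly: writing $\rn=\bigcup_{i\ge0}U_i$ with $U_0$ a fixed dilate of the defining cube $Q$ and $U_i:=2^iU_0\setminus2^{i-1}U_0$, one controls $\|S_L(\az)\|_{L^{p(\cdot)}(U_0)}$ by the $L^q(\rn)$-boundedness of $S_L$ (or \eqref{SL-bounded} when $q=\fz$), the atom size estimate and H\"older's inequality on $\vlp$, while for $i\ge1$ one uses the cancellation $\az=L^Mb$ together with \eqref{partial1} and the atom bound on $\|(\ell(Q)^2L)^Mb\|_{L^q(\rn)}$ to gain decay in $i$; the resulting bound for $\|S_L(\az)\|_{L^{p(\cdot)}(U_i)}$ is summable in $i$ in the quasi-norm $\vlp$ (using Remark~\ref{r-vlp} and the standard control of $\|\chi_{2^iQ}\|_{\vlp}$ by $\|\chi_Q\|_{\vlp}$) precisely because $M>\frac n2[\frac1{p_-}-1]$, giving $\|S_L(\az)\|_{\vlp}\ls\|\chi_Q\|_{\vlp}^{-1}$. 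For a general $f=\sum_j\lz_j\az_j$ one would invoke the sublinearity of $S_L$, the per-annulus decay just obtained, and the Fefferman--Stein vector-valued maximal inequality on $\vlp$ (available since $p(\cdot)\in C^{\log}(\rn)$) to bound $\|S_L(f)\|_{\vlp}\ls\cb(\{\lz_j\az_j\}_j)$, and then take the infimum over representations.

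For (b), I would run the tent-space machinery in the variable setting, following the scheme of \cite{hlmmy} (constant exponent) and \cite{yz15}. Given $f\in\hh_L^{p(\cdot)}(\rn)$, the Calder\'on reproducing formula $f=c_M\int_0^\fz(t^2L)^{M+1}e^{-t^2L}f\dt$ holds in $L^2(\rn)$, and $F(x,t):=t^2Le^{-t^2L}f(x)$ lies in the variable tent space $T_2^{p(\cdot)}(\rn)$ with $\|F\|_{T_2^{p(\cdot)}(\rn)}=\|S_L(f)\|_{\vlp}$. The atomic decomposition of $T_2^{p(\cdot)}(\rn)$ then yields $F=\sum_j\lz_jA_j$ with each $A_j$ a $T_2^{p(\cdot)}(\rn)$-atom supported in a tent $\widehat{Q_j}$ and $\cb$-type control of $\{\lz_j\}_j$ by $\|S_L(f)\|_{\vlp}$, whence $f=c_M\sum_j\lz_j\plm(A_j)$ with $\plm(A):=\int_0^\fz(t^2L)^Me^{-t^2L}(A(\cdot,t))\dt=L^M\lf(\int_0^\fz t^{2M}e^{-t^2L}(A(\cdot,t))\dt\r)$; the bound $\|\plm(A)\|_{L^2(\rn)}\ls\|A\|_{T_2^2(\rn)}$ gives $L^2(\rn)$-convergence. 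It then remains to check that $\plm(A_j)$ is, up to a uniform multiplicative constant, a $(p(\cdot),\fz,M)_L$-atom over a fixed dilate of $Q_j$: the size conditions come from the $L^2(\rn)\to L^\fz(\rn)$ off-diagonal estimates for $(t^2L)^ke^{-t^2L}$ (a consequence of Assumption~\ref{a-a2}) and the normalization of the $T_2^{p(\cdot)}(\rn)$-atoms, while the support condition is obtained by replacing, in the reproducing formula, the heat semigroup by an even function of $\sqrt L$ with compactly supported Fourier transform, so that by the finite propagation speed of $\cos(t\sqrt L)$ (a consequence of Assumptions~\ref{a-a1} and~\ref{a-a2}) only heights $t\ls\ell(Q_j)$ contribute and the output is supported in a dilate of $Q_j$; the comparability of the resulting square function with $S_L$ is supplied by the functional calculus for $L$. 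Inserting these atoms and coefficients into $\cb$ and using the tent-space estimate completes (b).

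The hard part will be step (b), and within it the two competing requirements on the reproducing formula: it must be compatible with the $S_L$- (heat-semigroup-) based definition of $\hlp$ so that the tent-space norm is comparable to $\|S_L(f)\|_{\vlp}$, and it must nonetheless output \emph{honestly} compactly supported atoms, which forces one through finite propagation speed and a delicate functional-calculus rewriting. A close second is controlling the $\cb$-functional through the \emph{variable} tent space atomic decomposition: since $p(\cdot)$ is non-constant, naive duality or interpolation is unavailable and every estimate must be routed through the $C^{\log}(\rn)$ hypothesis, via the vector-valued maximal inequality and the subadditivity recorded in Remark~\ref{r-vlp}. By contrast, the $L^2$-level bookkeeping and the completion argument are routine.
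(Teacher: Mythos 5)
Your proposal is correct and follows essentially the same route as the paper: direction (a) is the annular estimate of $S_L$ applied to an atom (near part via $L^q$-boundedness of $S_L$, far part via the cancellation $\az=L^Mb$ and the Gaussian derivative bound \eqref{partial1}, summed through Lemma \ref{l-key}, Remark \ref{r-hlmo} and Remark \ref{r-vlp}), and direction (b) is the variable tent-space atomic decomposition pushed through a Calder\'on reproducing formula whose outer factor has compactly supported kernel, exactly as in Proposition \ref{p-pi}. One correction of presentation: the pure heat-semigroup formula you write first, $f=c_M\int_0^\fz(t^2L)^{M+1}e^{-t^2L}f\,\frac{dt}{t}$, is not what one should start from, since after decomposing $F=t^2Le^{-t^2L}f$ into tent atoms one cannot ``replace'' the kernel inside $\pi_{L,M}$ by a compactly supported one without destroying the reproducing identity; rather, as the paper does, one must build the compact-support factor $\Phi(t\sqrt L)$ (with $\Phi$ the Fourier transform of a compactly supported even function) into the reproducing formula from the outset, $f=\pi_{\Phi,L,M}(t^2Le^{-t^2L}f)$ with normalization \eqref{2.2x}, so that the tent-space element stays $t^2Le^{-t^2L}f$ (hence $\|F\|_{T_2^{p(\cdot)}(\rr_+^{n+1})}=\|S_L(f)\|_{\vlp}$) while $\pi_{\Phi,L,M}$ already has finite propagation via Lemma \ref{l-lemma1}.
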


In this article, we use $\cs(\rn)$ to denote the space of all
Schwartz functions on $\rn$.

\begin{definition}\label{d-max-f}
(i) Let $\phi\in\cs(\rr)$ be an even function with $\phi(0)=1$.
For any $a\in(0,\fz)$ and $f\in L^2(\rn)$, the \emph{non-tangential maximal function}
of $f$ is defined by setting, for all $x\in\rn$,
\begin{equation*}
\phi_{L,\triangledown,a}^\ast(f)(x):=\sup_{t\in(0,\fz),\,|y-x|<at}
\lf|\phi(t\sqrt L)(f)(y)\r|.
\end{equation*}
A function $f\in L^2(\rn)$ is said to be in the set
$\hh_{L,\max}^{p(\cdot),\phi,a}(\rn)$ if $\phi_{L,\triangledown,a}^\ast(f)\in\vlp$;
moreover, define
$\|f\|_{H_{L,\max}^{p(\cdot),\phi,a}(\rn)}
:=\|\phi_{L,\triangledown,a}^\ast(f)\|_{\vlp}$.
Then the \emph{variable exponent Hardy space} $H_{L,\max}^{p(\cdot),\phi,a}(\rn)$
is defined to be the completion of $\hh_{L,\max}^{p(\cdot),\phi,a}(\rn)$ with respect
to the quasi-norm $\|\cdot\|_{H_{L,\max}^{p(\cdot),\phi,a}(\rn)}$.

Particularly, when $\phi(x):=e^{-x^2}$ for all $x\in\rn$,
we use $f_{L,\triangledown}^\ast$ to denote $\phi_{L,\triangledown,1}^\ast(f)$ and,
in this case, denote the space $H_{L,\max}^{p(\cdot),\phi,a}(\rn)$
simply by $H_{L,\max}^{p(\cdot)}(\rn)$.

(ii) For any $f\in L^2(\rn)$, define the \emph{grand non-tangential maximal
function} of $f$ by setting, for all $x\in\rn$,
$$\cg_{L,\triangledown}^\ast(f)(x)
:=\sup_{\phi\in \cf(\rr)}\phi_{L,\triangledown,1}^\ast(f)(x),$$
where $\cf(\rr)$ denotes the set of all even functions $\phi\in\cs(\rr)$
satisfying $\phi(0)\neq0$ and
$$
\sum_{k=0}^N\int_\rr(1+|x|)^N\lf|\frac{d^k\phi(x)}{dx^k}\r|^2\,dx
\le1
$$
with $N$ being a large enough number depending on $p(\cdot)$ and $n$.
Then the \emph{variable exponent Hardy space} $H_{L,\max}^{p(\cdot),\cf}(\rn)$
is defined in the same way as $H_{L,\max}^{p(\cdot),\phi,a}(\rn)$
but with $\phi_{L,\triangledown,a}^\ast(f)$ replaced by
$\cg_{L,\triangledown}^\ast(f)$.
\end{definition}

\begin{remark}\label{r-1.7x}
By Assumption \ref{a-a2}, we conclude that there exists a positive constant
$C$ such that, for any $f\in L^2(\rn)$ and $x\in\rn$,
$f_{L,\triangledown}^\ast(x)\le C\cm(f)(x)$.
Here and hereafter, $\cm$ denotes the \emph{Hardy-Littlewood maximal operator}, which is
defined by setting, for all locally integrable function $f$ and $x\in\rn$,
$$\cm(f)(x):=\sup_{B\ni x}\frac1{|B|}\int_B|f(y)|\,dy,$$
where the supremum is taken over all balls $B$ of $\rn$.
\end{remark}

The second main result of this article is presented as follows.

\begin{theorem}\label{t-max-c}
Let $p(\cdot)\in C^{\log}(\rn)$ with $p_+\in(0,1]$, $q\in(1,\fz]$,
$M\in(\frac n{2}[\frac1{p_-}-1],\fz)$ and
$L$ be an operator satisfying Assumptions \ref{a-a1} and \ref{a-a2}.
Then, for any $a\in(0,\fz)$ and $\phi$ as in Definition \ref{d-max-f},
the spaces $H_{L,{\rm at},M}^{p(\cdot),q}(\rn)$,
$H_{L,\max}^{p(\cdot),\cf}(\rn)$ and $H_{L,\max}^{p(\cdot),\phi,a}(\rn)$
coincide with equivalent quasi-norms.
\end{theorem}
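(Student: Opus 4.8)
The plan is to establish the chain of continuous inclusions
\[
H_{L,{\rm at},M}^{p(\cdot),q}(\rn)
\hookrightarrow H_{L,\max}^{p(\cdot),\cf}(\rn)
\hookrightarrow H_{L,\max}^{p(\cdot),\phi,a}(\rn)
\hookrightarrow H_L^{p(\cdot)}(\rn)
\hookrightarrow H_{L,{\rm at},M}^{p(\cdot),q}(\rn),
\]
the last of which is already Theorem \ref{t-atom-c}; since all four spaces are completions of dense subsets of $L^2(\rn)$, it suffices to prove the norm estimates on these dense subsets. The middle inclusion $H_{L,\max}^{p(\cdot),\cf}(\rn)\hookrightarrow H_{L,\max}^{p(\cdot),\phi,a}(\rn)$ is the easy direction: for a fixed admissible $\phi$ and fixed aperture $a$, one compares $\phi_{L,\triangledown,a}^\ast(f)$ with $\cg_{L,\triangledown}^\ast(f)$ pointwise up to the usual change-of-aperture argument, which in the variable-exponent setting is handled by the boundedness of the Hardy--Littlewood maximal operator $\cm$ on $L^{p(\cdot)/\underline p}(\rn)$ (valid since $p(\cdot)\in C^{\log}(\rn)$ and $M>\frac n2[\frac1{p_-}-1]$ forces $p_-$ to be controlled); one rescales $\phi$ to meet the normalization in $\cf(\rr)$ up to a harmless constant depending on $\phi$ and $n$.

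First I would treat the atomic-into-maximal inclusion. Given a $(p(\cdot),q,M)_L$-atom $\alpha=L^Mb$ associated with a cube $Q$, one splits $\cg_{L,\triangledown}^\ast(\alpha)$ via the annuli $U_j(Q)=2^jQ\setminus 2^{j-1}Q$. On the local part one uses the $L^q$-boundedness of the grand maximal operator (a consequence of Remark \ref{r-1.7x} and the $L^q$-boundedness of $\cm$, together with spectral calculus giving uniform $L^q$-bounds for $\phi(t\sqrt L)$) plus Hölder's inequality with the size estimate (ii) of the atom; on the far annuli one exploits the cancellation $\alpha=L^Mb$ by writing $\phi(t\sqrt L)L^M = t^{-2M}(t^2L)^M\phi(t\sqrt L)$ and using the Gaussian/off-diagonal decay of the kernel of $(t^2L)^M\phi(t\sqrt L)$ — this is where $M>\frac n2[\frac1{p_-}-1]$ is used, to make $\sum_j 2^{-j(2M+\cdots)}$ times the relevant $\|\chi_{2^jQ}\|_{\vlp}$ factors summable. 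Summing over the atoms in a representation of $f$ and invoking the standard variable-exponent summation lemma (an $\ell^{\underline p}$-type inequality in $L^{p(\cdot)}$, as in Remark \ref{r-vlp} and \cite{yz15}) yields $\|\cg_{L,\triangledown}^\ast(f)\|_{\vlp}\lesssim \|f\|_{H_{L,{\rm at},M}^{p(\cdot),q}(\rn)}$.

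The main obstacle is the remaining inclusion $H_{L,\max}^{p(\cdot),\phi,a}(\rn)\hookrightarrow H_L^{p(\cdot)}(\rn)$, i.e.\ controlling the Lusin area function $S_L(f)$ by a single non-tangential maximal function $\phi_{L,\triangledown,a}^\ast(f)$. Here I would invoke the key result of Song and Yan \cite{sy15}: under Assumptions \ref{a-a1} and \ref{a-a2} they prove, via their modification of Calderón's technique, a good-$\lambda$ inequality (equivalently a distributional estimate) relating the level sets of $S_L(f)$ and of the heat-semigroup non-tangential maximal function. The constant-exponent statement is $\|S_L(f)\|_{L^p}\lesssim\|f^\ast_{L,\triangledown}\|_{L^p}$, but their argument produces a pointwise good-$\lambda$ or a Fefferman--Stein-type vector estimate that upgrades to the variable setting because the only analytic input is the $L^{p(\cdot)}$-boundedness of $\cm$ (again from $p(\cdot)\in C^{\log}(\rn)$); one must also pass from a general admissible $\phi$ and aperture $a$ to the heat kernel $e^{-t^2L}$ with aperture $1$, which is done by a comparison-of-maximal-functions argument using the subordination/functional-calculus expansion of $\phi(t\sqrt L)$ in terms of $\{e^{-s^2L}\}$ and the boundedness of $\cm$. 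Once these three inclusions are assembled, the equivalence of all three quasi-norms follows, completing the proof.
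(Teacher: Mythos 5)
Your chain of inclusions differs from the paper's in the crucial third link: you propose to close the loop via $H_{L,\max}^{p(\cdot),\phi,a}(\rn)\st H_L^{p(\cdot)}(\rn)$, i.e.\ by proving that the Lusin area function is controlled by a non-tangential maximal function, and you claim this follows from a ``good-$\lambda$ inequality'' in Song--Yan \cite{sy15}. This is a genuine gap. Song and Yan do not prove a good-$\lambda$ inequality relating $S_L(f)$ to $f^\ast_{L,\triangledown}$; what their modification of Calder\'on's technique produces is a direct \emph{atomic decomposition} of $f$ built from the level sets of a maximal function. The paper follows exactly this route: it sets $O_i:=\{x\in\rn:\ \cn_L^\ast(f)(x)\ge 2^i\}$, takes a Whitney decomposition $\{Q_{i,j}\}_j$ of each $O_i$, builds tents $T_{i,j}$, and uses the reproducing formula $f=C_{(\Psi)}\int_0^\fz\Psi(t\sqrt L)t^2Le^{-t^2L}(f)\,\frac{dt}t$ to write $f=\sum_{i,j}\lambda_{i,j}\alpha_{i,j}$ with $\alpha_{i,j}$ a $(p(\cdot),\fz,M)_L$-atom supported near $30Q_{i,j}$. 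In other words, the paper proves $H_{L,\max}^{p(\cdot),\phi,a}(\rn)\cap L^2(\rn)\st H_{L,{\rm at},M}^{p(\cdot),\fz}(\rn)\cap L^2(\rn)$ directly, not via the area-function space. Your proposed step has no available proof: a distributional good-$\lambda$ estimate of the kind you describe is not in \cite{sy15}, and even if one had such an inequality for Lebesgue measure, upgrading it to an $L^{p(\cdot)}$ bound is not a consequence of the $L^{p(\cdot)}$-boundedness of $\cm$ alone --- in the weighted literature that passage uses $A_\fz$, a structure that $t^{p(\cdot)}$ generally fails to possess (cf.\ Remark \ref{r-jia1}). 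Moreover, the standard way one would actually prove ``maximal controls area'' in this setting is precisely by passing through the atomic space, so your chain, once unwound, reduces to the paper's argument with an unnecessary detour.

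Your first two links are essentially correct and match the paper: for atomic $\to$ grand non-tangential you localize on $4\sqrt n Q$ using the $L^q$ size bound of the atom and Lemma~\ref{l-key}, and on the far annuli you use $\alpha=L^M b$, the Gaussian decay \eqref{partial1}, and the constraint $M>\frac n2[\frac1{p_-}-1]$ to get a pointwise estimate $\az^\ast_{L,\triangledown}(x)\ls\frac{[\ell(Q)]^{n+\delta}}{|x-x_Q|^{n+\delta}}\|\chi_Q\|_\vlp^{-1}$, which is then summed exactly as in the estimate of ${\rm II}$ in the proof of Proposition~\ref{p-pi}. The passage from the grand maximal space to a fixed $(\phi,a)$ uses Lemma~\ref{l-lemma2} (comparison of maximal functions built from different $\psi$'s, via the peak-point estimate \eqref{max-f1-x} and $\cm$-boundedness). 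To repair your argument you should replace the third link by the Calder\'on--Song--Yan decomposition and show $H_{L,\max}^{p(\cdot),\phi,a}(\rn)\cap L^2\st H_{L,{\rm at},M}^{p(\cdot),\fz}(\rn)$ directly; the final link $H_{L,{\rm at},M}^{p(\cdot),\fz}(\rn)\st H_{L,{\rm at},M}^{p(\cdot),q}(\rn)$ is then the trivial inclusion of Remark~\ref{r-atom}, and no reference to $H_L^{p(\cdot)}(\rn)$ is needed in this theorem.
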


\begin{remark}
When $p(\cdot)\equiv{\rm constant}\in(0,1]$, the conclusion of Theorem \ref{t-max-c} was
proved by Song and Yan in \cite[Theorem 1.4]{sy15}.
\end{remark}

\begin{definition}
(i) Let $\phi\in\cs(\rr)$ be an even function with $\phi(0)=1$.
For $f\in L^2(\rn)$ and $x\in\rn$, let
\begin{equation*}
\phi_{L,+}^\ast(f)(x):=\sup_{t\in(0,\fz)}\lf|\phi(t\sqrt L)(f)(x)\r|.
\end{equation*}
Particularly, when $\phi(x):=e^{-x^2}$ for all $x\in\rn$,
we use $f_{L,+}^\ast$ to denote $\phi_{L,+}^\ast(f)$.
The \emph{variable exponent Hardy space} $H_{L,\rad}^{p(\cdot)}(\rn)$
is defined in the same way as $H_{L,\max}^{p(\cdot),\phi,a}(\rn)$ but with
$\phi_{L,\triangledown,a}^\ast(f)$ replaced by
$f_{L,+}^\ast$.

(ii) For any $f\in L^2(\rn)$ and $x\in\rn$, let
$$\cg_{L,+}^\ast(f)(x):=\sup_{\phi\in \ca(\rr)}\phi_{L,+}^\ast(f)(x).$$
The \emph{variable exponent Hardy space} $H_{L,\rad}^{p(\cdot),\cf}(\rn)$ is defined
in the same way as $H_{L,\max}^{p(\cdot),\phi,a}(\rn)$ but with
$\phi_{L,\triangledown,a}^\ast(f)$ replaced by
$\cg_{L,+}^\ast(f)$.
\end{definition}

\begin{remark}\label{r-dense}
We point out that, for any $q\in(1,\fz]$ and $M\in\nn$,
the sets
\begin{center}
$H_{L,\rm{at},M}^{p(\cdot),q}(\rn)\cap L^2(\rn)$,
$H_{L,\max}^{p(\cdot),\phi,a}(\rn)\cap L^2(\rn)$,
$H_{L,\max}^{p(\cdot),\cf}(\rn)\cap L^2(\rn)$
\end{center}
and
\begin{center}
$H_{L,\rad}^{p(\cdot),\phi,a}(\rn)\cap L^2(\rn)$,
$H_{L,\rad}^{p(\cdot),\cf}(\rn)\cap L^2(\rn)$
\end{center}
are, respectively, dense in the spaces
$H_{L,\rm{at},M}^{p(\cdot),q}(\rn)$, $H_{L,\max}^{p(\cdot),\phi,a}(\rn)$,
$H_{L,\max}^{p(\cdot),\cf}(\rn)$, $H_{L,\rad}^{p(\cdot),\phi,a}(\rn)$
and $H_{L,\rad}^{p(\cdot),\cf}(\rn)$.
\end{remark}

By the definitions of $H_{L,\max}^{p(\cdot)}(\rn)$ and
$H_{L,\rad}^{p(\cdot)}(\rn)$, we
easily know that the continuous inclusion
$H_{L,\max}^{p(\cdot)}(\rn)\st H_{L,\rad}^{p(\cdot)}(\rn)$ holds true.
It is a natural question whether or not the continuous inclusion
\begin{equation}\label{11-25x}
H_{L,\rad}^{p(\cdot)}(\rn)\st H_{L,\max}^{p(\cdot)}(\rn)
\end{equation}
holds true.
We remark that, in the case of $p(\cdot)\equiv{\rm constant}\in(0,1]$,
\eqref{11-25x} has been proved in \cite[Theorem 1.9]{YYs15}
under the following additional Assumption \ref{a-a3} on the operator $L$,
which gives an affirmative answer to the open question stated in
\cite[Remark 3.4]{sy15}.

\begin{assumption}\label{a-a3}
There exist positive constants $C$ and $\mu\in(0,1]$ such that,
for all $t\in(0,\fz)$ and $x,\ y_1,\ y_2\in\rn$,
\begin{equation}\label{11-25}
|K_t(y_1,x)-K_t(y_2,x)|\le \frac C{t^{n/2}}\frac{|y_1-y_2|^\mu}{t^{\mu/2}}.
\end{equation}
\end{assumption}

\begin{remark}
There exist some operators on $\rn$ whose heat kernels
satisfy Assumption \ref{a-a3}. These operators include Schr\"odinger operators with
non-negative potentials belonging to the reverse H\"older class
(see, for example, \cite{dz02}) and second-order divergence form elliptic operators
with bounded measurable real coefficients (see, for example, \cite{at98}).
\end{remark}

Motivated by \cite[Theorem 1.9]{YYs15}, in this article, we also establish
the following radial maximal function characterization of $H_L^{p(\cdot)}(\rn)$
via showing that \eqref{11-25x} holds true.

\begin{theorem}\label{t-max-rad}
Let $p(\cdot)\in C^{\log}(\rn)$ with $p_+\in(0,1]$ and
$L$ be a linear operator on $L^2(\rn)$ satisfying Assumptions \ref{a-a1}, \ref{a-a2} and \ref{a-a3}.
If $q\in(1,\fz]$, $M\in(\frac n{2}[\frac1{p_-}-1],\fz)\cap\nn$,
then the spaces $H_{L,{\rm at},M}^{p(\cdot),q}(\rn)$, $H_{L,\max}^{p(\cdot)}(\rn)$
and $H_{L,\rad}^{p(\cdot)}(\rn)$ coincide with equivalent quasi-norms.
\end{theorem}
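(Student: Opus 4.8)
The plan is to establish the chain of continuous inclusions
\[
H_{L,{\rm at},M}^{p(\cdot),q}(\rn)\st H_{L,\max}^{p(\cdot)}(\rn)
\st H_{L,\rad}^{p(\cdot)}(\rn)\st H_{L,{\rm at},M}^{p(\cdot),q}(\rn),
\]
together with the trivial inclusion $H_{L,\max}^{p(\cdot)}(\rn)\st H_{L,\rad}^{p(\cdot)}(\rn)$ already noted in the excerpt. The first inclusion $H_{L,{\rm at},M}^{p(\cdot),q}(\rn)\st H_{L,\max}^{p(\cdot)}(\rn)$ is a special case of Theorem \ref{t-max-c} (take $\phi(x):=e^{-x^2}$ and $a=1$), so it is free. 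The last inclusion $H_{L,\rad}^{p(\cdot)}(\rn)\st H_{L,{\rm at},M}^{p(\cdot),q}(\rn)$ follows by combining Theorem \ref{t-max-c} with the deep inclusion \eqref{11-25x}, $H_{L,\rad}^{p(\cdot)}(\rn)\st H_{L,\max}^{p(\cdot)}(\rn)$, which is the real content of the theorem. So everything reduces to proving \eqref{11-25x} under the extra H\"older-regularity Assumption \ref{a-a3}.

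To prove \eqref{11-25x}, I would argue by density (using Remark \ref{r-dense}, it suffices to work with $f\in H_{L,\rad}^{p(\cdot)}(\rn)\cap L^2(\rn)$) and establish the pointwise-type domination of the non-tangential maximal function $f_{L,\triangledown}^\ast$ by the radial maximal function $f_{L,+}^\ast$ in the scale of $L^{p(\cdot)}(\rn)$; concretely, I expect an inequality of the form
\[
\lf\|f_{L,\triangledown}^\ast\r\|_{\vlp}
\le C\,\lf\|f_{L,+}^\ast\r\|_{\vlp}
\]
for all $f\in L^2(\rn)$. The standard route (going back to Fefferman--Stein and adapted to the operator setting in \cite[Theorem 1.9]{YYs15}) is to fix $(y,t)$ with $|y-x|<t$, write $e^{-t^2L}f(y)$ in terms of the heat kernel, and compare it with the radial value $e^{-t^2L}f(z)$ at nearby points $z$; the difference is controlled using the H\"older regularity \eqref{11-25} of $K_t$ in its first variable, which produces a gain of a power $|y-z|^\mu/t^\mu$. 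Integrating this against a suitable auxiliary (tangential) maximal function and absorbing, one obtains, for each $N$ large, a bound of $f_{L,\triangledown}^\ast(x)$ by a constant multiple of a grand maximal function of $f$ plus a small multiple of $f_{L,\triangledown}^\ast$ itself on a controlled region; a Calder\'on-type good-$\lambda$ or self-improvement argument then removes the error term. The key technical inputs are: (a) the Gaussian bound and its derivative version \eqref{partial1}, giving the needed decay for the error integrals; (b) Assumption \ref{a-a3}, giving the H\"older gain; and (c) the boundedness of the Hardy--Littlewood maximal operator $\cm$ on $L^{p(\cdot)}(\rn)$ for $p(\cdot)\in C^{\log}(\rn)$ with $p_-\in(1,\fz)$ — here one must first pass to a power $p(\cdot)/r$ with $r<p_-$ via the standard subharmonicity/Fefferman--Stein vector-valued trick (this is exactly where the $\log$-H\"older condition is used, through \cite{cfbook,dhr11}).

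I expect the main obstacle to be the variable-exponent bookkeeping in step (c): unlike the constant-exponent case of \cite{YYs15}, one cannot simply take an $L^r$ norm with a fixed $r<p$; instead one works with the quasi-norm $\|\cdot\|_{L^{p(\cdot)/r}(\rn)}$ and must control the interplay between the spatial decay coming from the Gaussian/H\"older estimates and the $\log$-H\"older modulus of continuity of $p(\cdot)$, typically through a careful dyadic annular decomposition of $\rn$ centered at $x$ and an application of the vector-valued maximal inequality on $L^{p(\cdot)/r}(\ell^{r'})$. A secondary, more routine, difficulty is justifying the reproducing-formula manipulations (Calder\'on reproducing formula adapted to $L$, convergence of the relevant integrals) for $f\in L^2(\rn)$, which is handled exactly as in \cite{sy15,YYs15} since those arguments are $L^2$-based and do not see the exponent. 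Once \eqref{11-25x} is in hand, the stated coincidence of the three spaces with equivalent quasi-norms follows immediately by chaining the inclusions above and invoking Theorems \ref{t-atom-c} and \ref{t-max-c}.
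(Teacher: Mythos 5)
Your overall plan matches the paper's: reduce everything to the norm inequality $\|f_{L,\triangledown}^\ast\|_{\vlp}\ls\|f_{L,+}^\ast\|_{\vlp}$ for $f\in L^2(\rn)$, prove it by comparing $e^{-t^2L}f$ at nearby points using the H\"older regularity \eqref{11-25}, dominate by $\cm([f_{L,+}^\ast]^{n/\lambda})^{\lambda/n}$ on a good set, absorb the bad set via the boundedness of $\cm$ on $L^{p(\cdot)/r}(\rn)$, and then finish by density and Theorem \ref{t-max-c}. That much is right.

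The step you gloss over, and the one that actually carries the weight, is the absorption: to deduce $\|f_{L,\triangledown}^\ast\|_{\vlp}\le 2^{1/p_-}\|f_{L,\triangledown}^\ast\chi_F\|_{\vlp}$ from
\[
\|f_{L,\triangledown}^\ast\|_{\vlp}^{p_-}\le \|f_{L,\triangledown}^\ast\chi_F\|_{\vlp}^{p_-}+(C/\gamma)^{p_-}\|f_{L,\triangledown}^\ast\|_{\vlp}^{p_-}
\]
one must already know that $\|f_{L,\triangledown}^\ast\|_{\vlp}<\fz$, and merely assuming $f\in L^2(\rn)$ does not give that. The paper resolves this with a two-stage argument that your sketch does not mention. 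First it introduces truncated maximal functions $f_{L,\triangledown}^{\ast,\ez,N}$ and $M_L^{\lambda,\ez,N}$ (with extra decay factors $t^N/[(t+\ez)(1+\ez|y|)]^N$) and proves, using the Schwartz-kernel estimate from Assumption \ref{a-a2}, that $f_{L,\triangledown}^{\ast,\ez,N}\in\vlp$ for $N=N(f)$ large enough (Lemma \ref{l-max5}(i)); this supplies the a priori finiteness needed for the self-improvement, and after letting $\ez\to0^+$ via Fatou it yields only the \emph{qualitative} conclusion that $f_{L,+}^\ast\in\vlp$ implies $f_{L,\triangledown}^\ast\in\vlp$ — the constant one gets this way is $\sim 2^N$ and hence depends on $f$, as the paper's remark after Lemma \ref{l-max5} emphasizes. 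Only then, knowing $\|f_{L,\triangledown}^\ast\|_{\vlp}<\fz$, does the paper rerun the good-$\lambda$/absorption argument directly on the untruncated $f_{L,\triangledown}^\ast$, $M_L^\lambda(f)$, to extract a constant independent of $f$ (Proposition \ref{p-max}). Your proposal would need to be supplemented by precisely this intermediate truncation step, otherwise the absorption is circular; the rest (Lemmas \ref{l-max2}--\ref{l-max4}, the use of \eqref{max-f4z} from Assumptions \ref{a-a2} and \ref{a-a3}, and the vector-valued $\cm$ bound) is as you describe.
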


As an immediate consequence of Theorems \ref{t-atom-c}, \ref{t-max-c} and
\ref{t-max-rad}, we have the following conclusion.

\begin{corollary}\label{c-max}
Let $p(\cdot)$, $L$, $q$ and $M$ be as in Theorem \ref{t-max-rad}.
Then, for any $a\in(0,\fz)$ and $\phi$ being as in Definition \ref{d-max-f},
the spaces $H_L^{p(\cdot)}(\rn)$, $H_{L,{\rm at},M}^{p(\cdot),q}(\rn)$,
$H_{L,\max}^{p(\cdot),\phi,a}(\rn)$,
$H_{L,\max}^{p(\cdot),\cf}(\rn)$, $H_{L,\rad}^{p(\cdot)}(\rn)$
and $H_{L,\rad}^{p(\cdot),\cf}(\rn)$ coincide with equivalent quasi-norms.
\end{corollary}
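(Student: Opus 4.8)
Since Corollary \ref{c-max} is stated as a consequence of Theorems \ref{t-atom-c}, \ref{t-max-c} and \ref{t-max-rad}, the plan is to obtain it by simply chaining these three theorems together and then handling the one remaining space, $H_{L,\rad}^{p(\cdot),\cf}(\rn)$, by an elementary pointwise comparison of the radial and non-tangential grand maximal functions. First I would observe that the hypotheses imposed in the corollary on $p(\cdot)$, $L$, $q$ and $M$ (namely, those of Theorem \ref{t-max-rad}) contain the hypotheses of all three theorems, so each of them is applicable. Moreover, every space occurring in the corollary is, by definition, the completion of its intersection with $L^2(\rn)$, and, by Remark \ref{r-dense}, these intersections are dense; hence it suffices to prove the claimed quasi-norm equivalences for a fixed $f\in L^2(\rn)$ and then extend to the completions by the uniqueness of completion.

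Take $H_{L,{\rm at},M}^{p(\cdot),q}(\rn)$ as a reference space. Then: Theorem \ref{t-atom-c} identifies it, with equivalent quasi-norms, with $H_L^{p(\cdot)}(\rn)$; Theorem \ref{t-max-c}, applied for an arbitrary admissible pair $(a,\phi)$ and, in particular, for $\phi(x):=e^{-x^2}$ and $a=1$, identifies it with $H_{L,\max}^{p(\cdot),\cf}(\rn)$, with $H_{L,\max}^{p(\cdot),\phi,a}(\rn)$ for every such $(a,\phi)$, and with $H_{L,\max}^{p(\cdot)}(\rn)$; and Theorem \ref{t-max-rad} (this is the step that uses Assumption \ref{a-a3}) identifies it with $H_{L,\rad}^{p(\cdot)}(\rn)$. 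Consequently every space listed in the corollary other than $H_{L,\rad}^{p(\cdot),\cf}(\rn)$ already coincides, with equivalent quasi-norms, with the reference space.

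It remains to bring in $H_{L,\rad}^{p(\cdot),\cf}(\rn)$, and for this I would use the pointwise estimates
$$
f_{L,+}^\ast(x)\ls\cg_{L,+}^\ast(f)(x)\ls\cg_{L,\triangledown}^\ast(f)(x),
$$
valid for all $f\in L^2(\rn)$ and $x\in\rn$: the first holds because a sufficiently small constant multiple of the Gaussian $\phi(x):=e^{-x^2}$ belongs to the class defining $\cg_{L,+}^\ast$ and $\phi\mapsto\phi(t\sqrt L)$ is linear, while the second follows from $\phi_{L,+}^\ast(f)\le\phi_{L,\triangledown,1}^\ast(f)$ pointwise for each admissible $\phi$. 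Applying $\|\cdot\|_{\vlp}$ yields the continuous inclusions $H_{L,\max}^{p(\cdot),\cf}(\rn)\hookrightarrow H_{L,\rad}^{p(\cdot),\cf}(\rn)\hookrightarrow H_{L,\rad}^{p(\cdot)}(\rn)$, and since the two outer spaces both coincide with $H_{L,{\rm at},M}^{p(\cdot),q}(\rn)$ by the previous paragraph, $H_{L,\rad}^{p(\cdot),\cf}(\rn)$ is squeezed between two copies of the reference space and therefore coincides with it as well. Combining the above proves the corollary. I do not anticipate any real obstacle: the content is carried entirely by Theorems \ref{t-atom-c}--\ref{t-max-rad}, and the only non-bookkeeping point is the first inequality displayed above, which is nonetheless immediate from the definition of the grand radial maximal function.
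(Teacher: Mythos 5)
Your proposal is correct, and it fills in exactly the details that the paper leaves implicit by calling the corollary an ``immediate consequence.'' The chaining through Theorems \ref{t-atom-c}, \ref{t-max-c} and \ref{t-max-rad} handles every space on the list except $H_{L,\rad}^{p(\cdot),\cf}(\rn)$, which indeed appears in none of those theorems; you correctly identified this as the one non-bookkeeping gap. Your sandwich argument is the right fix: for every $\phi$ in the normalizing class one has $\phi_{L,+}^\ast(f)\le\phi_{L,\triangledown,1}^\ast(f)$ pointwise (take $y=x$ in the supremum defining the non-tangential maximal function), so $\cg_{L,+}^\ast(f)\le\cg_{L,\triangledown}^\ast(f)$; and a small constant multiple of the Gaussian lies in the class, which by linearity of $\phi\mapsto\phi(t\sqrt L)$ gives $f_{L,+}^\ast\ls\cg_{L,+}^\ast(f)$. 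Taking $\|\cdot\|_{\vlp}$ squeezes $H_{L,\rad}^{p(\cdot),\cf}(\rn)$ between $H_{L,\max}^{p(\cdot),\cf}(\rn)$ and $H_{L,\rad}^{p(\cdot)}(\rn)$, both already identified with the atomic space, and the density in Remark \ref{r-dense} carries the equivalence to the completions. The only cosmetic remark worth making is that the paper's $\sup_{\phi\in\ca(\rr)}$ in the definition of $\cg_{L,+}^\ast$ is plainly a typo for $\sup_{\phi\in\cf(\rr)}$, as your argument tacitly assumes; it would be worth stating that explicitly.
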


\begin{remark}\label{r-jia1}
Let $\vz:\ \rn\times[0,\fz)\to[0,\fz)$ be a growth function in \cite{ky14}.
D. Yang and S. Yang \cite{YYs15} established
several maximal function characterizations of $H_{\vz,L}(\rn)$, the
\emph{Musielak-Orlicz-Hardy spaces associated with operators} $L$ satisfying the same
assumptions as in the article. Recall that the Musielak-Orlicz space
$L^{\vz}(\rn)$ is defined to be the set of all measurable functions $f$
on $\rn$ such that
$$\|f\|_{L^{\vz}(\rn)}:=\inf\lf\{\lz\in(0,\fz):\
\int_\rn\vz(x,|f(x)|/\lz)\,dx\le1\r\}<\fz,$$
and the space $H_{\vz,L}(\rn)$ is
defined in the same way as $H_L^{p(\cdot)}(\rn)$ with $\|\cdot\|_{\vlp}$
replaced by $\|\cdot\|_{L^\vz(\rn)}$ (see \cite{bckyy}).

Observe that, if
\begin{equation}\label{vz}
\vz(x,t):=t^{p(x)}\quad {\rm for\ all}\quad x\in\rn\quad{\rm and}\quad t\in[0,\fz),
\end{equation}
then $L^{\vz}(\rn)=L^{p(\cdot)}(\rn)$. However, a general
Musielak-Orlicz function $\vz$ satisfying all the assumptions
in \cite{ky14} (and hence \cite{YYs15}) may not have the form
as in \eqref{vz} (see \cite{ky14}). On the other hand, it was proved in
\cite[Remark 2.23(iii)]{yyz14} that there exists a variable exponent function
$p(\cdot)\in C^{\log}(\rn)$,
but $t^{p(\cdot)}$ is not a uniformly Muckenhoupt weight,
which was required in \cite{YYs15}.
Thus, Musielak-Orlicz-Hardy spaces associated with operators in \cite{YYs15}
and variable exponent Hardy spaces associated with operators in this article
do not cover each other.
\end{remark}

This article is organized as follows.

We first show Theorem \ref{t-atom-c} in Section \ref{s2-1}
and then, as an application, we give out the proof of Theorem \ref{t-max-c}
in Section \ref{s2-2}. Finally, in Section \ref{s-2.3}, applying Theorem \ref{t-max-c},
we prove Theorem \ref{t-max-rad}.

We remark that, in the proof of Theorem \ref{t-atom-c},
we borrow some ideas from \cite{jy10,jy11}. Precisely, to establish the atomic
characterization of $H_L^{p(\cdot)}(\rn)$, we need to use the Calder\'on reproducing
formula associated with $L$ (see \eqref{CRF} below) and the atomic decomposition
of the variable tent space $T_2^{p(\cdot)}(\urn)$ established in
\cite[Theorem 2.16]{zyl14} (see also Lemma \ref{l-tent} below). Moreover, we show that
the project operator $\pi_{\Phi,L,M}$ is bounded
from $T_2^{p(\cdot)}(\urn)$ to $H_L^{p(\cdot)}(\rn)$ by
proving that, for any $(p(\cdot),\fz)$-atom
$a$ corresponding to the tent space $T_2^{p(\cdot)}(\urn)$,
$\pi_{\Phi,L,M}(a)$ is a $(p(\cdot),q,M)_L$-atom up to a positive constant multiple
(see Proposition \ref{p-pi} below).
We point out that Lemma \ref{l-key} below obtained by Sawano \cite[Lemma 4.1]{Sa13}
plays a key role in the proofs of Proposition \ref{p-pi} and Theorem \ref{t-atom-c}.

The strategy of the proof of Theorem \ref{t-max-c} is presented in the
following chains of inclusion relations:
\begin{align}\label{11-26}
\quad\lf[H_{L,{\rm at},M}^{p(\cdot),q}(\rn)\cap L^2(\rn)\r]
&\st \lf[H_{L,\max}^{p(\cdot),\cf}(\rn)\cap L^2(\rn)\r]
\st \lf[H_{L,\max}^{p(\cdot),\phi,a}(\rn)\cap L^2(\rn)\r]\\
&\st \lf[H_{L,{\rm at},M}^{p(\cdot),\fz}(\rn)\cap L^2(\rn)\r]
\st \lf[H_{L,{\rm at},M}^{p(\cdot),q}(\rn)\cap L^2(\rn)\r].\noz
\end{align}
The second and the fourth inclusions are obviously.
We prove the first inclusion in \eqref{11-26} via borrowing some ideas from the
proof of \cite[Theorem 1.4]{sy15} and the third one by establishing
a pointwise estimate for the non-tangential maximal function
of any $(p(\cdot),\fz,M)_L$-atom.

The main step in the proof of Theorem \ref{t-max-rad} is to prove that,
for all $f\in L^2(\rn)$,
\begin{equation}\label{jia1}
\lf\|f_{L,\triangledown}^\ast\r\|_{\vlp}
\ls\lf\|f_{L,+}^\ast\r\|_{\vlp}
\end{equation}
via a modified technical based on the proof of \cite[Theorem 2.1.4(b)]{Gra2}.
Indeed, to obtain the inequality \eqref{jia1}, for all $f\in L^2(\rn)$,
we first introduce a maximal function $f_{L,\triangledown}^{\ast,\ez,N}$ of $f$,
where $\ez, N\in(0,\fz)$, which is a truncated version of
the non-tangential maximal function $f_{L,\triangledown}^\ast$
(see \eqref{11-26x} below).
Then, under Assumption \ref{a-a3}, we investigate the relation between
$f_{L,\triangledown}^{\ast,\ez,N}$ and $f_{L,+}^\ast$ in Lemma \ref{l-max4} below,
which is further applied to prove the above inequality.

Here, we point out that the method used in the proof of \eqref{jia1}
is different from that of the case $p(\cdot)\equiv {\rm constant}\in(0,1]$,
which, as a special case, was essentially proved in \cite[Theorem 1.9]{YYs15}.
Indeed, in \cite[Theorem 1.9]{YYs15}, Yang et al. considered the
Musielak-Orlicz Hardy spaces $H_{\vz,L}(\rn)$ associated with the operator $L$
satisfying the same assumptions as in the present article. Moreover, the approach
used in the proof of \cite[Theorem 1.9]{YYs15} strongly depends on the properties
of uniformly Muckenhoupt weights, which are not possessed by $t^{p(\cdot)}$
(see Remark \ref{r-jia1}).

At the end of this section, we make some conventions on notation.
Let $\nn:=\{1,2,\dots\}$, $\zz_+:=\nn\cup\{0\}$ and $\rr_+^{n+1}:=\rn\times(0,\fz)$.
We denote by
$C$ a \emph{positive constant} which is independent of the main
parameters, but may vary from line to line. The \emph{symbol}
$A\ls B$ means $A\le CB$. If $A\ls B$ and $B\ls A$, then we write $A\sim B$.
We use $C_{(\az,\dots)}$ to denote a positive constant depending on the indicated
parameters $\az,\dots$.
If $E$ is a subset of $\rn$, we denote by $\chi_E$ its
\emph{characteristic function} and by $E^\complement$ the set $\rn\backslash E$.
For $a \in {\mathbb R}$, $\lfloor a \rfloor$ denotes the largest integer $m$
such that $m \le a$. For all $x\in\rn$ and $r\in(0,\fz)$, denote by $Q(x,r)$
the cube centered at $x$ with side length $r$, whose sides are parallel to the axes
of coordinates, and by $B(x,r)$ the ball, namely,
$B(x,r):=\{y\in\rn:\ |y-x|<r\}$.
For each cube $Q\st\rn$ and $a\in(0,\fz)$,
we use $x_Q$ to denote the center of $Q$ and $\ell(Q)$ the side length
of $Q$, and we also denote by $aQ$
the cube concentric with $Q$ having the side length $a\ell(Q)$.

\section{Proof of Theorem \ref{t-atom-c}\label{s2-1}}
\hskip\parindent
To prove Theorem \ref{t-atom-c}, we first recall some notions about the
variable exponent tent space introduced in \cite{zyl14}.

Let $p(\cdot)\in\cp(\rn)$.
For all measurable functions $g$
on $\rr_+^{n+1}$ and $x\in\rn$, define
$$\ct(g)(x)
:=\lf\{\int_{\Gamma(x)}|g(y,t)|^2\,
\frac{dy\,dt}{t^{n+1}}\r\}^{1/2},$$
where $\Gamma(x)$ is as in \eqref{1.2x}.
Then the \emph{variable exponent tent space}
$T_2^{p(\cdot)}(\urn)$ is defined to be the set of all
measurable functions $g$ on $\rr_+^{n+1}$ such that
$\|g\|_{T^{p(\cdot)}_2(\rr_+^{n+1})}
:=\|\ct(g)\|_{\vlp}<\fz$.
Recall that, for any $q\in(0,\fz)$ being a constant exponent,
the \emph{tent space} $T_2^q(\rr_+^{n+1})$ was introduced in \cite{cms85}, which
is defined to be the set of all measurable functions $g$ on $\urn$ such that
$\|g\|_{T_2^q(\rr_+^{n+1})}:=\|\ct(g)\|_{L^q(\rn)}<\fz$.
Moreover, if $g\in T_2^2(\urn)$, then we easily know that
\begin{equation*}
\|g\|_{T_2^2(\urn)}=\lf\{\int_{\urn}|g(x,t)|^2\,\frac{dxdt}t\r\}^{\frac12}.
\end{equation*}

Let $q\in(1,\fz)$ and $p(\cdot)\in\cp(\rn)$.
Recall that a measurable function $a$ on $\rr_+^{n+1}$ is called a
$(p(\cdot),q)$-\emph{atom} if $a$ satisfies that
$\supp a\subset\wh Q$ for some cube $Q\subset\rn$ and
$$\|a\|_{T_2^q(\rr_+^{n+1})}\le |Q|^{1/q}\|\chi_Q\|_{\vlp}^{-1},$$
here and hereafter, for any cube $Q\st\rn$, $\wh Q$ denotes the
\emph{tent} over $Q$, namely,
$$\wh Q:=\lf\{(y,t)\in\urn:\ B(y,t)\st Q\r\}.$$
Furthermore, if $a$ is a $(p(\cdot),q)$-atom for all
$q\in(1,\fz)$, then $a$ is called a $(p(\cdot),\fz)$-\emph{atom}. We point
out that the notion of  $(p(\cdot),\fz)$-atoms was introduced in \cite{zyl14}.

For any $p(\cdot)\in\cp(\rn)$, $\{\lz_j\}_{j\in\nn}\st\cc$
and $\{Q_j\}_{j\in\nn}$ of cubes in $\rn$, let
\begin{equation*}
\ca\lf(\{\lz_j\}_{j\in\nn},\{Q_j\}_{j\in\nn}\r)
:=\lf\|\lf\{\sum_{j\in\nn}\lf[\frac{|\lz_j|\chi_{Q_j}}{\|Q_j\|_{\vlp}}
\r]^{\underline{p}}\r\}^{\frac 1{\underline{p}}}\r\|_{\vlp},
\end{equation*}
where $\underline{p}:=\min\{1,p_-\}$.

The following atomic characterization of the space $T_2^{p(\cdot)}(\urn)$
was obtained in \cite[Corollary 3.7]{yz15}.

\begin{lemma}\label{l-tent}
Let $p(\cdot)\in C^{\log}(\rn)$.
Then $f\in T^{p(\cdot)}_2(\urn)$ if and only if there exist
sequences $\{\lz_j\}_{j\in\nn}\subset\cc$ and $\{a_j\}_{j\in\nn}$
of $(p(\cdot),\fz)$-atoms such that, for almost every
$(x,t)\in\urn$,
\begin{equation}\label{tent-de}
f(x,t)=\sum_{j\in\nn}\lz_ja_j(x,t)
\end{equation}
and
\begin{equation*}
\int_\rn\lf\{\sum_{j\in\nn}
\lf[\frac{\lz_j\chi_{Q_j}}{\|\chi_{Q_j}\|_{\vlp}}\r]^{\underline{p}}
\r\}^{\frac{p(x)}{\underline{p}}}\,dx<\fz,
\end{equation*}
where, for each $j$, $Q_j$ denotes the cube appearing in the support of $a_j$;
moreover, for all $f\in T_2^{p(\cdot)}(\urn)$,
$
\|f\|_{T_2^{p(\cdot)}(\urn)}\sim\ca(\{\lz_j\}_{j\in\nn},\{Q_j\}_{j\in\nn})
$
with the implicit equivalent positive constants independent of $f$.
\end{lemma}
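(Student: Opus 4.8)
The plan is to adapt the Coifman--Meyer--Stein tent-space decomposition \cite{cms85} to the variable exponent setting, following the template of \cite{jy11} and \cite{zyl14}, with Lemma \ref{l-key} (Sawano's lemma) as the workhorse for the summation estimates in $\vlp$. I would first dispatch the easy implication. Given $f$ as in \eqref{tent-de} with the stated $\vlp$-type summability, the $\ell^2$-structure of $\ct$ gives $\ct(f)\le\sum_{j\in\nn}|\lz_j|\ct(a_j)$ pointwise; moreover $\supp a_j\st\wh Q_j$ forces $\supp\ct(a_j)\st\ov{Q_j}$, while $\|\ct(a_j)\|_{L^q(\rn)}=\|a_j\|_{T_2^q(\urn)}\le|Q_j|^{1/q}\|\chi_{Q_j}\|_{\vlp}^{-1}$ for every $q\in(1,\fz)$. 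Thus each $\lz_j\ct(a_j)$ is a multiple of an $L^q$-normalized ``blob'' supported in $Q_j$, and summing such blobs in $\vlp$ and bounding the outcome by $\ca(\{\lz_j\},\{Q_j\})$ is precisely an application of Lemma \ref{l-key} (after fixing $q$ large in terms of $p_-$); this yields $f\in T_2^{p(\cdot)}(\urn)$ together with $\|f\|_{T_2^{p(\cdot)}(\urn)}\ls\ca(\{\lz_j\},\{Q_j\})$.

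For the converse, let $f\in T_2^{p(\cdot)}(\urn)\st T_2^2(\urn)$. For each $k\in\zz$ put $O_k:=\{x\in\rn:\ \ct(f)(x)>2^k\}$, which is open since $\ct(f)$ is lower semicontinuous; fix a small dimensional $\gz\in(0,1)$, let $O_k^\ast:=\{x\in\rn:\ \cm(\chi_{O_k})(x)>\gz\}$ and $F_k:=\rn\setminus O_k^\ast$. Whitney-decompose each $O_k^\ast$ into almost disjoint dyadic cubes $\{Q_{k,i}\}_i$ with side length comparable to the distance to $F_k$, and dilate them by a fixed dimensional factor to cubes $Q_{k,i}^\ast$, so that the tents $\{\wh{Q_{k,i}^\ast}\}_i$ cover the tent region over $O_k^\ast$. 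Using $\ct(f)(x)\le2^k$ for $x\in F_k$ and letting $k\to\fz$, one checks that $f=0$ almost everywhere off $\bigcup_k\bigcup_i\wh{Q_{k,i}^\ast}$, and this permits writing $\urn$ (modulo a $\frac{dy\,dt}{t}$-null set where $f$ vanishes) as a disjoint union of measurable pieces $T_{k,i}\st\wh{Q_{k,i}^\ast}$; set $\lz_{k,i}:=C2^k\|\chi_{Q_{k,i}^\ast}\|_{\vlp}$ and $b_{k,i}:=\lz_{k,i}^{-1}f\chi_{T_{k,i}}$, so that $f=\sum_{k,i}\lz_{k,i}b_{k,i}$ (relabelled by a single index $j\in\nn$).

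The heart of the matter is the two-step assertion: (a) each $b_{k,i}$ is, up to a constant uniform in $k,i$ and $q$, a $(p(\cdot),\fz)$-atom attached to $Q_{k,i}^\ast$; and (b) $\ca(\{\lz_{k,i}\},\{Q_{k,i}^\ast\})\ls\|\ct(f)\|_{\vlp}$. For (a) one needs the classical tent estimate $\|f\chi_{T_{k,i}}\|_{T_2^q(\urn)}\ls2^k|Q_{k,i}^\ast|^{1/q}$ for all $q\in(0,\fz]$ with the constant independent of $q$: on $T_{k,i}$ the point $(y,t)$ lies outside the tent over $O_{k+1}^\ast$, hence a fixed proportion of $B(y,t)$ meets $F_{k+1}$, where $\ct(f)\le2^{k+1}$; localizing the cone integrals to $Q_{k,i}^\ast$ gives the case $q=2$, and the same reasoning in fact bounds the local tent function of $f\chi_{T_{k,i}}$ pointwise by $C2^k$ on a set of measure $\sim|Q_{k,i}^\ast|$, which upgrades the estimate to all $q$. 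For (b), the bounded overlap of the Whitney cubes yields
$$\sum_{k\in\zz}\sum_i\lf(\frac{\lz_{k,i}\chi_{Q_{k,i}^\ast}}{\|\chi_{Q_{k,i}^\ast}\|_{\vlp}}\r)^{\underline{p}}\ls\sum_{k\in\zz}2^{k\underline{p}}\chi_{O_k^\ast},$$
after which one passes from $O_k^\ast$ back to $O_k$ by a Fefferman--Stein-type maximal inequality in $\vlp$ (licensed by $p(\cdot)\in C^{\log}(\rn)$, again an instance of Lemma \ref{l-key}) and observes that $\sum_k2^{k\underline{p}}\chi_{O_k}(x)\sim[\ct(f)(x)]^{\underline{p}}$, whence $\ca(\{\lz_{k,i}\},\{Q_{k,i}^\ast\})\sim\|\ct(f)\|_{\vlp}=\|f\|_{T_2^{p(\cdot)}(\urn)}$. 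Finiteness of the right-hand side also gives the required summability of the decomposition, and the disjointness of the $T_{k,i}$ together with $f\in T_2^2(\urn)$ makes the convergence in \eqref{tent-de} hold pointwise almost everywhere (indeed in $T_2^2(\urn)$ by dominated convergence). The main obstacle is exactly step~(b)'s passage from $O_k^\ast$ to $O_k$ in the quasi-norm $\|\cdot\|_{\vlp}$ when $p_-\le1$ --- where $\cm$ is no longer bounded on the naturally arising space $L^{p(\cdot)/\underline{p}}(\rn)$ --- and dealing with this is precisely what Sawano's Lemma \ref{l-key} and the log-H\"older continuity of $p(\cdot)$ are designed for.
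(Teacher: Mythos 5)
The paper does not prove Lemma \ref{l-tent}: it is quoted from \cite[Corollary 3.7]{yz15}, which in turn adapts \cite[Theorem 2.16]{zyl14}. The comparison is therefore against that cited source, and your outline does follow its Coifman--Meyer--Stein template: Whitney/level-set decomposition for the hard direction, Lemma \ref{l-key} for the easy direction. Your treatment of the easy direction is correct (minor slip: the restriction in Lemma \ref{l-key} is $q\in[1,\fz)\cap(p_+,\fz)$, so $q$ is chosen relative to $p_+$, not $p_-$). For the hard direction the plan is right, but two points are glossed over.

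First, the passage from $\chi_{O_k^\ast}$ to $\chi_{O_k}$ inside the $\vlp$-norm is \emph{not} an application of Lemma \ref{l-key}: that lemma requires each function in the sum to be supported in a single cube with a normalized $L^q$-bound, which $\chi_{O_k^\ast}$ is not. The correct tool is the Fefferman--Stein vector-valued inequality, Lemma \ref{l-hlmo}. Concretely, fix $r\in(0,\underline{p})$, write $\chi_{O_k^\ast}\le\gz^{-1/r}\lf[\cm(\chi_{O_k})\r]^{1/r}$, raise to the power $\underline{p}$, and estimate
$\lf\|\lf\{\sum_k[\cm(2^{kr}\chi_{O_k})]^{\underline{p}/r}\r\}^{r/\underline{p}}\r\|_{L^{p(\cdot)/r}(\rn)}$
by Lemma \ref{l-hlmo}, which applies since $p_-/r>1$ and $\underline{p}/r>1$.

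Second, and more consequentially: a pointwise bound ``$\ct(f\chi_{T_{k,i}})\ls 2^k$ on a set of measure $\sim|Q_{k,i}^\ast|$'' does not yield the uniform-in-$q$ estimate $\|f\chi_{T_{k,i}}\|_{T_2^q(\urn)}\ls 2^k|Q_{k,i}^\ast|^{1/q}$, because $\ct(f\chi_{T_{k,i}})$ is supported on all of $Q_{k,i}^\ast$ and you have no control on the exceptional subset; you need the bound on essentially all of $Q_{k,i}^\ast$. Moreover, with $O_k$ defined via the aperture-one tent functional $\ct$, that pointwise bound is simply not available: for $x\in O_{k+1}^\ast\cap Q_{k,i}^\ast$, the cone integrals through $T_{k,i}$ are only controlled by the tent functional of $f$ at a nearby point of $F_{k+1}$ \emph{with an enlarged aperture}. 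The remedy in the references is to define the level sets $O_k$ using a tent functional with a sufficiently large fixed aperture $\az$, which makes the pointwise bound genuine on all of $Q_{k,i}^\ast$, and then to pay for this at the end by invoking a change-of-aperture estimate in $\vlp$ (in the spirit of Lemma \ref{l-lemma0}, or its tent-functional counterpart, which again reduces to Lemma \ref{l-hlmo}) to return to $\|f\|_{T_2^{p(\cdot)}(\urn)}$. With these two repairs your argument is correct and coincides with the proof of the cited result.
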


In what follows, let $T_{2,c}^{p(\cdot)}(\urn)$ and $T_{2,c}^q(\urn)$ with
$q\in(0,\fz)$ be the sets of all functions in $T_2^{p(\cdot)}(\urn)$
, respectively, $T_2^q(\urn)$ with compact supports.

\begin{remark}\label{r-ten1}
Let $p(\cdot)\in C^{\log}(\rn)$.

(i) It is known that $T_{2,c}^{p(\cdot)}(\urn)\subset T_{2,c}^2(\urn)$ as sets
(see \cite[Proposition 3.9]{yz15}).

(ii) By \cite[Corollary 3.4]{yz15}, we know that, for all $f\in T_2^{p(\cdot)}(\urn)$,
the decomposition \eqref{tent-de} also holds true in $T_2^{p(\cdot)}(\urn)$,
which, in the case that $p(\cdot)\equiv {\rm constant}\in(0,\fz)$, was proved
by Jiang and Yang in \cite[Proposition 3.1]{jy10}.
\end{remark}

For a non-negative self-adjoint operator $L$ on $L^2(\rn)$,
denoted by $E_L$ the spectral measure associated with $L$.
Then, for any bounded Borel measurable function $F:\ [0,\fz)\to\cc$,
the operator $F(L):\ L^2(\rn)\to L^2(\rn)$ is defined by the formula
\begin{equation*}
F(L):=\int_0^\fz F(\lz)dE_L(\lz).
\end{equation*}

Let $\phi_0\in \cs(\rr)$ be a given even function and $\supp \phi_0\st(-1,1)$.
Assume that $\Phi$ denotes the \emph{Fourier transform} of $\phi_0$, namely,
for all $\xi\in\rn$, $\Phi(\xi):=\int_\rn \phi_0(x)e^{-ix\cdot\xi}\,dx$.
For all $f\in L^2(\urn)$ having compact support and $x\in \rn$, define
\begin{equation*}
\pi_{\Phi,L,M}(f)(x):=C_{(\Phi,M)}\int_0^\fz(t^2L)^{M+1}\Phi(t\sqrt L)(f(\cdot,t))(x)\,
\frac{dt}t,
\end{equation*}
where $C_{(\Phi,M)}$ is the positive constant such that
\begin{equation}\label{2.2x}
1=C_{(\Phi,M)}\int_0^\fz t^{2(M+1)}\Phi(t)t^2e^{-t^2}\,\frac{dt}t.
\end{equation}

We then have the following lemma, which is a part of \cite[Lemma 3.5]{hlmmy}.

\begin{lemma}\label{l-lemma1}
Let $\phi_0\in \cs(\rr)$ be an even function and $\supp \phi_0\st(-1,1)$. Assume that
$\Phi$ denotes the Fourier transform of $\phi_0$. Then, for any $k\in\zz_+$, the kernels
$\{K_{(t^2L)^k\Phi(t\sqrt L)}\}_{t>0}$ of the operators
$\{(t^2L)^k\Phi(t\sqrt L)\}_{t>0}$ satisfy that there exists a positive constant
$C$ such that, for all $t\in(0,\fz)$ and $x,\,y\in\rn$,
\begin{equation*}
\supp \lf(K_{(t^2L)^k\Phi(t\sqrt L)}\r)\st \{(x,y)\in\rn\times\rn:\ |x-y|\le t\}.
\end{equation*}
\end{lemma}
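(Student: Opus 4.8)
The plan is to reduce the assertion to the \emph{finite speed of propagation} for the wave operator associated with $L$. Since $L$ is non-negative self-adjoint and, by Assumption \ref{a-a2}, its heat semigroup satisfies the Gaussian upper bounds (and hence the Davies--Gaffney $L^2$ off-diagonal estimates), the wave operators $\{\cos(s\sqrt L)\}_{s\in\rr}$, defined by the spectral calculus of $L$, enjoy the finite speed of propagation: whenever $f_1,\,f_2\in L^2(\rn)$ satisfy $\dist(\supp f_1,\supp f_2)>|s|$, then $\la\cos(s\sqrt L)f_1,f_2\ra=0$; equivalently, the (a priori distributional) integral kernel of $\cos(s\sqrt L)$ is supported in $\{(x,y)\in\rn\times\rn:\ |x-y|\le|s|\}$. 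This is classical, and I would either invoke it (see, for instance, \cite{hlmmy} and the references therein) or reproduce the short energy-inequality argument for solutions of $\pa_s^2u+Lu=0$. This is the only step of the proof that is not purely formal, and hence the main point.

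Granting this, I would first treat the case $k=0$. Since $\phi_0\in\cs(\rr)$ is even with $\supp\phi_0\st(-1,1)$, for all $\xi\in\rr$ one has $\Phi(\xi)=\int_\rr\phi_0(s)\cos(s\xi)\,ds=\int_{-1}^1\phi_0(s)\cos(s\xi)\,ds$. By the spectral theorem for $L$ together with Fubini's theorem for the spectral measure $\la E_L(\cdot)f,g\ra$ (legitimate since $\phi_0\in L^1(\rr)$, since the total variation of $\la E_L(\cdot)f,g\ra$ is at most $\|f\|_{L^2(\rn)}\|g\|_{L^2(\rn)}$, and since $\sup_{s\in\rr}\|\cos(s\sqrt L)\|_{L^2(\rn)\to L^2(\rn)}\le1$), it follows that, for all $t\in(0,\fz)$ and $f,\,g\in L^2(\rn)$,
$$\la\Phi(t\sqrt L)f,g\ra=\int_{-1}^1\phi_0(s)\la\cos(st\sqrt L)f,g\ra\,ds.$$
If $\dist(\supp f,\supp g)>t$, then, for every $s\in(-1,1)$, $\dist(\supp f,\supp g)>t\ge|st|$, so each term on the right-hand side vanishes by the finite speed of propagation; hence the kernel $K_{\Phi(t\sqrt L)}$ is supported in $\{(x,y)\in\rn\times\rn:\ |x-y|\le t\}$. (That this kernel is a genuine function rather than merely a distribution follows from the Gaussian bounds; only its support is relevant here.)

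For general $k\in\nn$, I would use the elementary identity $\widehat{\phi_0^{(2k)}}(\xi)=(i\xi)^{2k}\Phi(\xi)=(-1)^k\xi^{2k}\Phi(\xi)$ to see that $\psi_k:=(-1)^k\phi_0^{(2k)}$ belongs to $\cs(\rr)$, is even, satisfies $\supp\psi_k\st\supp\phi_0\st(-1,1)$, and has Fourier transform $\widehat{\psi_k}(\xi)=\xi^{2k}\Phi(\xi)$. Since $(t\sqrt\lz)^{2k}\Phi(t\sqrt\lz)=(t^2\lz)^k\Phi(t\sqrt\lz)$ for all $\lz\in[0,\fz)$, the functional calculus of $L$ gives $(t^2L)^k\Phi(t\sqrt L)=\widehat{\psi_k}(t\sqrt L)$. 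Applying the already established case $k=0$ with $\phi_0$ replaced by $\psi_k$ then yields $\supp(K_{(t^2L)^k\Phi(t\sqrt L)})\st\{(x,y)\in\rn\times\rn:\ |x-y|\le t\}$, which completes the proof.
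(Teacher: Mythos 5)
Your proof is correct and follows the standard route; the paper itself does not prove this lemma but cites it as part of \cite[Lemma~3.5]{hlmmy}, and the argument there is precisely the one you reconstruct (finite speed of propagation for $\cos(s\sqrt L)$, Fourier representation of the even compactly supported $\phi_0$, and the reduction of the general $k$ case to $k=0$ via $\psi_k:=(-1)^k\phi_0^{(2k)}$). Both steps are handled properly, including the Fubini justification and the observation that $\psi_k$ is again even, Schwartz, and supported in $(-1,1)$.
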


\begin{remark}\label{r-pi}
The operator $\pi_{\Phi,L,M}$, initially defined on $T_{2,c}^{2}(\urn)$,
extends to a bounded linear operator from $T_2^2(\urn)$ to $L^2(\rn)$
(see \cite[Proposition 4.2(ii)]{jy11}).
\end{remark}

Moreover, we have the following conclusion.

\begin{proposition}\label{p-pi}
Let $L$ and $p(\cdot)$ be as in Definition \ref{d-hardys}.

{\rm(i)} Let $M\in \nn$ and $a$ be a $(p(\cdot),\fz)$-atom. Then $\pi_{\Phi,L,M}(a)$
is a $(p(\cdot),\fz,M)_L$-atom up to a positive constant multiple.

{\rm(ii)} The operator $\pi_{\Phi,L,M}$, initially defined on $T_{2,c}^{p(\cdot)}(\urn)$,
extends to a bounded linear operator from $T_2^{p(\cdot)}(\urn)$
to $H_L^{p(\cdot)}(\rn)$.
\end{proposition}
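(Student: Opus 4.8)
\textbf{Proof proposal for Proposition \ref{p-pi}.}

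The plan is to prove part (i) first and then obtain part (ii) as a consequence via the atomic characterization of $T_2^{p(\cdot)}(\urn)$ (Lemma \ref{l-tent}) together with density considerations. For part (i), let $a$ be a $(p(\cdot),\fz)$-atom supported in $\wh Q$ for some cube $Q\st\rn$. Set $b:=C_{(\Phi,M)}\int_0^\fz t^{2(M+1)}L\Phi(t\sqrt L)(a(\cdot,t))\,\frac{dt}t$, so that $\pi_{\Phi,L,M}(a)=L^Mb$ formally, and more generally, for $j\in\{0,1,\dots,M\}$, write $L^jb=C_{(\Phi,M)}\int_0^\fz(t^2L)^{M+1}t^{-2(M-j)}\Phi(t\sqrt L)(a(\cdot,t))\,\frac{dt}t$; the support condition $\supp(L^jb)\st CQ$ (for an absolute dilation constant, which we may absorb by enlarging $Q$) follows directly from Lemma \ref{l-lemma1}, since $a(\cdot,t)$ is supported in the slice $\{x:\ B(x,t)\st Q\}$ and the kernel of $(t^2L)^{M+1}\Phi(t\sqrt L)$ is supported within distance $t$ of the diagonal. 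The heart of part (i) is then the size estimate: I must show $\|([\ell(Q)]^2L)^jb\|_{L^q(\rn)}\ls[\ell(Q)]^{2M}|Q|^{1/q}\|\chi_Q\|_{\vlp}^{-1}$ for each $j\in\{0,\dots,M\}$ and each $q\in(1,\fz)$. To do this I would use the standard $L^q$--$L^2$ duality trick: pair $([\ell(Q)]^2L)^jb$ against a function $g\in L^{q'}(\rn)$ with $\|g\|_{L^{q'}}\le1$ supported in $CQ$, move to the tent-space side via the quadratic estimate, and use the $L^2$-boundedness of the relevant square functions together with $\|a\|_{T_2^2(\urn)}\le|Q|^{1/2}\|\chi_Q\|_{\vlp}^{-1}$ (which holds since $a$ is in particular a $(p(\cdot),2)$-atom) and H\"older's inequality on $CQ$; here Lemma \ref{l-key} of Sawano (\cite[Lemma 4.1]{Sa13}), controlling $|CQ|^{1/q}\|\chi_{CQ}\|_{\vlp}^{-1}$ by $|Q|^{1/q}\|\chi_Q\|_{\vlp}^{-1}$, is exactly what lets us pass from the enlarged cube back to $Q$ in the normalization.

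The main obstacle, as usual in these arguments, is the rigorous justification of the quadratic/Calder\'on-type identities used in the duality computation, in particular interchanging the order of integration in $\int_\rn ([\ell(Q)]^2L)^jb(x)\,\overline{g(x)}\,dx$ and $\int_0^\fz\!\!\cdots\frac{dt}t$, and controlling the resulting $\int_0^\fz\langle \psi_j(t\sqrt L)a(\cdot,t),\,\phi_k(t\sqrt L)g\rangle\,\frac{dt}t$ by the Cauchy--Schwarz inequality in the $(x,t)$-variables followed by the $L^2(\urn,\frac{dxdt}{t})$-boundedness of the operators $g\mapsto\{\phi_k(t\sqrt L)g\}_{t>0}$ (a consequence of spectral calculus for the non-negative self-adjoint $L$, since $|\lambda^k\Phi(\lambda)|^2$ integrates against $\frac{d\lambda}{\lambda}$ to a finite constant). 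Since $a\in T_{2,c}^2(\urn)$ by Remark \ref{r-ten1}(i), all of these manipulations are legitimate on the dense class, and the constants produced depend only on $\Phi$, $M$, $n$ and $q$, never on $a$ or $Q$.

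For part (ii), start with $f\in T_{2,c}^{p(\cdot)}(\urn)$; by Remark \ref{r-ten1}(i) this lies in $T_{2,c}^2(\urn)$, so $\pi_{\Phi,L,M}(f)\in L^2(\rn)$ is well-defined by Remark \ref{r-pi}. Applying Lemma \ref{l-tent}, write $f=\sum_{j\in\nn}\lz_ja_j$ with $(p(\cdot),\fz)$-atoms $a_j$ supported in tents over cubes $Q_j$ and with $\ca(\{\lz_j\}_{j\in\nn},\{Q_j\}_{j\in\nn})\ls\|f\|_{T_2^{p(\cdot)}(\urn)}$, the series converging in $T_2^{p(\cdot)}(\urn)$ and in $T_2^2(\urn)$ (Remark \ref{r-ten1}(ii)); by the $T_2^2\to L^2$ boundedness of $\pi_{\Phi,L,M}$, we get $\pi_{\Phi,L,M}(f)=\sum_{j\in\nn}\lz_j\pi_{\Phi,L,M}(a_j)$ in $L^2(\rn)$. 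Part (i) says each $\pi_{\Phi,L,M}(a_j)$ is (a uniform constant times) a $(p(\cdot),q,M)_L$-atom associated with $C Q_j$, so this is, up to a constant, an atomic $(p(\cdot),q,M)_L$-representation of $\pi_{\Phi,L,M}(f)$, whence by Definition \ref{d-atom} and Lemma \ref{l-key}, $\|\pi_{\Phi,L,M}(f)\|_{H_{L,{\rm at},M}^{p(\cdot),q}(\rn)}\ls\ca(\{\lz_j\}_{j\in\nn},\{Q_j\}_{j\in\nn})\ls\|f\|_{T_2^{p(\cdot)}(\urn)}$; invoking Theorem \ref{t-atom-c} identifies $H_{L,{\rm at},M}^{p(\cdot),q}(\rn)$ with $H_L^{p(\cdot)}(\rn)$ and gives the desired bound on $T_{2,c}^{p(\cdot)}(\urn)$. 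Finally, a routine density argument extends $\pi_{\Phi,L,M}$ to a bounded operator from all of $T_2^{p(\cdot)}(\urn)$ to $H_L^{p(\cdot)}(\rn)$, since $T_{2,c}^{p(\cdot)}(\urn)$ is dense in $T_2^{p(\cdot)}(\urn)$. One technical point worth flagging here is that Theorem \ref{t-atom-c}, whose proof this proposition feeds into, must not be invoked circularly; in practice one shows the $H_{L,{\rm at},M}^{p(\cdot),q}$-norm bound directly and only later, after the reverse inclusion is also established, cites Theorem \ref{t-atom-c}'s equivalence --- so in the actual write-up part (ii) should be phrased as boundedness into $H_{L,{\rm at},M}^{p(\cdot),q}(\rn)$, with the identification with $H_L^{p(\cdot)}(\rn)$ deferred.
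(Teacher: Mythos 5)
Your treatment of part (i) is essentially the paper's: define $b$ via the $dt/t$ integral, invoke Lemma \ref{l-lemma1} for the support condition, and obtain the $L^q$ size bound by pairing $([\ell(Q)]^2L)^jb$ against $h\in L^{q'}(Q)$ and exploiting the $L^{q'}$-boundedness of the auxiliary square function together with $\|a\|_{T_2^q(\urn)}\le|Q|^{1/q}\|\chi_Q\|_{\vlp}^{-1}$. One small misattribution: the comparison of $\|\chi_{CQ}\|_{\vlp}$ with $\|\chi_Q\|_{\vlp}$ for a dilated cube is Lemma \ref{l-bigsball}, not Lemma \ref{l-key}; the latter is Sawano's Fefferman--Stein-type estimate for sums $\sum_j\lz_ja_j$ with $\supp a_j\st Q_j$, and it does not enter part (i) at all in the paper.

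For part (ii), however, there is a genuine gap, and the circularity you flag at the end is not something you actually resolve; it is fatal to the route you propose. The paper needs Proposition \ref{p-pi}(ii) --- boundedness of $\pi_{\Phi,L,M}$ from $T_2^{p(\cdot)}(\urn)$ into $H_L^{p(\cdot)}(\rn)$, the latter defined via the Lusin area function $S_L$ --- precisely to run the proof of Theorem \ref{t-atom-c}: the inclusion $H_{L,{\rm at},M}^{p(\cdot),q}(\rn)\st H_L^{p(\cdot)}(\rn)$ is proved there ``by an argument similar to that used in the proof of Proposition \ref{p-pi}(ii),'' and the reverse inclusion uses the Calder\'on reproducing formula $f=\pi_{\Phi,L,M}(t^2Le^{-t^2L}f)$ together with (ii) to get convergence of the atomic series in $H_L^{p(\cdot)}(\rn)$. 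So invoking Theorem \ref{t-atom-c}, or deferring to it, cannot close the argument: the content of (ii) is exactly the analytic estimate that Theorem \ref{t-atom-c}'s proof rests on. Your suggested workaround (``phrase (ii) as boundedness into $H_{L,{\rm at},M}^{p(\cdot),q}(\rn)$'') merely restates part (i) together with convergence bookkeeping, and does none of the work the paper does in (ii). What is actually missing from your write-up is the direct estimate
$\|S_L(\pi_{\Phi,L,M}(f))\|_{\vlp}\ls\|f\|_{T_2^{p(\cdot)}(\urn)}$. The paper obtains it by decomposing $f=\sum_j\lz_ja_j$ into tent atoms, writing $\pi_{\Phi,L,M}(f)=\sum_j\lz_j\az_j$ with $\az_j$ a $(p(\cdot),\fz,M)_L$-atom (by part (i)), splitting $S_L(\az_j)$ into $\chi_{4\sqrt nQ_j}$ and $\chi_{(4\sqrt nQ_j)^\complement}$ pieces, controlling the local piece by the $L^q$-boundedness \eqref{SL-bounded} of $S_L$ together with Lemma \ref{l-key} and Remark \ref{r-hlmo}, and controlling the global piece by first proving the pointwise decay estimate \eqref{area-esti0},
\begin{equation*}
S_L(\az_j)(x)\ls\frac{[\ell(Q_j)]^{n+\delta}}{|x-x_{Q_j}|^{n+\delta}}\frac1{\|\chi_{Q_j}\|_{\vlp}}\quad\text{for }x\in(4\sqrt nQ_j)^\complement
\end{equation*}
with $\delta\in(n[1/p_--1],2M)$, via the Gaussian bounds \eqref{partial1} applied separately on the scales $t<\ell(Q_j)$ (using the $L^q$ bound on $\az_j$) and $t\ge\ell(Q_j)$ (using $\az_j=L^Mb_j$ and the $L^q$ bound on $b_j$), and then summing dyadic shells by Remark \ref{r-hlmo}. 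This off-support decay argument, and the choice of $\delta$ in terms of $p_-$ and $M$, is the heart of (ii) and should appear explicitly.
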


The proof of Proposition \ref{p-pi} depends on
the following several lemmas.
The following Fefferman-Stein vector-valued inequality
of the Hardy-Littlewood maximal operator $\cm$
on the space $\vlp$ was obtained in \cite[Corollary 2.1]{cfmp06}.

\begin{lemma}\label{l-hlmo}
Let $r\in(1,\fz)$ and $p(\cdot)\in C^{\log}(\rn)$.
If $p_-\in(1,\fz)$ with $p_-$ as in \eqref{2.1x}, then there exists a positive
constant $C$ such that, for all sequences $\{f_j\}_{j=1}^\fz$ of measurable
functions,
$$\lf\|\lf\{\sum_{j=1}^\fz \lf[\cm (f_j)\r]^r\r\}^{1/r}\r\|_{\vlp}
\le C\lf\|\lf(\sum_{j=1}^\fz|f_j|^r\r)^{1/r}\r\|_{\vlp}.$$
\end{lemma}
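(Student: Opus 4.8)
The plan is to obtain this lemma by combining the scalar boundedness of $\cm$ on variable Lebesgue spaces with the Rubio de Francia extrapolation theorem in the variable exponent setting; in this way the only genuinely analytic input becomes the classical \emph{weighted} vector-valued Fefferman--Stein maximal inequality.

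First I recall the scalar result: if $p(\cdot)\in C^{\log}(\rn)$ and $p_-\in(1,\fz)$ (note $p_+<\fz$ automatically, since $p(\cdot)\in\cp(\rn)$ and $p_-\le p_+$), then $\cm$ is bounded on $\vlp$. This is Diening's theorem, and both parts of the $C^{\log}$ condition are used: the local $\log$-H\"older estimate to transfer averages of $|f|$ to averages of $|f|^{p(\cdot)/p_-}$ over small cubes, and the decay of $p(\cdot)$ at infinity to control the far annuli. Applying this to rescaled exponents, for any fixed $p_0\in(1,p_-)$ the exponent $p(\cdot)/p_0$ again lies in $C^{\log}(\rn)$ with lower bound $p_-/p_0>1$; hence its associate exponent $(p(\cdot)/p_0)'$ also belongs to $C^{\log}(\rn)$ and is bounded below by $(p_+/p_0)'>1$, so that $\cm$ is bounded on $L^{(p(\cdot)/p_0)'}(\rn)$. (The same reasoning shows $\cm$ is bounded on the associate space $L^{p'(\cdot)}(\rn)$.)

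Next I use the weighted analogue of the desired inequality: for every $p_0\in(1,\fz)$, every $r\in(1,\fz)$ and every Muckenhoupt weight $w\in A_{p_0}$,
\begin{equation*}
\lf\|\lf\{\sum_{j=1}^\fz[\cm(f_j)]^r\r\}^{1/r}\r\|_{L^{p_0}(w)}
\le C\lf\|\lf(\sum_{j=1}^\fz|f_j|^r\r)^{1/r}\r\|_{L^{p_0}(w)},
\end{equation*}
where $C$ depends only on $n$, $p_0$, $r$ and the $A_{p_0}$ constant of $w$; this is the classical weighted vector-valued maximal inequality of Andersen and John, itself a consequence of the scalar weighted maximal bound. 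Now I invoke the extrapolation theorem for variable Lebesgue spaces of Cruz-Uribe, Martell and P\'erez: if a pair of nonnegative measurable functions $(F,G)$ satisfies $\|F\|_{L^{p_0}(w)}\le C\|G\|_{L^{p_0}(w)}$ for all $w\in A_{p_0}$, with $C$ depending on $w$ only through $[w]_{A_{p_0}}$, then $\|F\|_{\vlp}\le C\|G\|_{\vlp}$ for every $p(\cdot)\in C^{\log}(\rn)$ with $p_0<p_-\le p_+<\fz$, the required hypothesis being precisely the boundedness of $\cm$ on $L^{(p(\cdot)/p_0)'}(\rn)$ verified above. Choosing $p_0\in(1,p_-)$, $F:=\{\sum_{j}[\cm(f_j)]^r\}^{1/r}$ and $G:=(\sum_j|f_j|^r)^{1/r}$ (first for finite sums, then letting the number of terms tend to infinity by monotone convergence) yields the claimed estimate on $\vlp$.

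The main obstacle — and the only place where real work is concealed — is the scalar maximal theorem on variable $L^p$ (equivalently, verifying the extrapolation hypothesis on $L^{(p(\cdot)/p_0)'}(\rn)$): this is exactly where the assumption $p(\cdot)\in C^{\log}(\rn)$, in both its local and its global form, is needed. Once that is granted, the weighted vector-valued bound is classical and the passage to the variable exponent space is formal. An alternative that sidesteps extrapolation proceeds by associate-space duality: one estimates $\|\{\sum_j[\cm(f_j)]^r\}^{1/r}\|_{\vlp}$ by pairing $\sum_j[\cm(f_j)]^r$ against a nonnegative $g$ in the unit ball of $L^{(p(\cdot)/r)'}(\rn)$, replaces $g$ by its Rubio de Francia maximal envelope (an $A_1$ weight of controlled characteristic), and applies the scalar weighted bound termwise; this route, however, requires an initial rescaling so that the conjugate exponent in play stays bounded below by $1$, which is why the extrapolation argument above is the cleaner path.
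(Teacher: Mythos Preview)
Your argument is correct. Note, however, that the paper does not actually prove this lemma: it is quoted verbatim as \cite[Corollary~2.1]{cfmp06} and used as a black box. What you have written is essentially a sketch of the proof given in that reference, where the variable-exponent vector-valued maximal inequality is indeed obtained from the classical weighted Fefferman--Stein inequality via Rubio de Francia extrapolation, the extrapolation hypothesis being precisely the boundedness of $\cm$ on $L^{(p(\cdot)/p_0)'}(\rn)$ guaranteed by Diening's theorem under the $C^{\log}$ assumption. So your approach is not merely consistent with the paper's --- it \emph{is} the standard proof behind the citation.
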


\begin{remark}\label{r-vlp-m}
Let $p(\cdot)\in C^{\log}(\rn)$ and $p_-\in(1,\fz)$. Then there exists a positive
constant $C$ such that, for all $f\in \vlp$,
$\|\cm(f)\|_{\vlp}\le C\|f\|_{\vlp}$
(see, for example, \cite[Theorem 4.3.8]{dhr11}).
\end{remark}

\begin{remark}\label{r-hlmo}
Let $k\in\nn$ and $p(\cdot)\in C^{\log}(\rn)$.
Then, by Lemma \ref{l-hlmo} and the fact that, for all cubes $Q\subset\rn$,
$r\in(0,p_-)$,
$\chi_{2^kQ}\le 2^{kn/r} [\cm(\chi_{Q})]^{1/r}$, we conclude that there exists
a positive constant $C$ such that,
for any $\{\lz_j\}_{j\in\nn}\subset\cc$
and cubes $\{Q_j\}_{j\in\nn}$ of $\rn$,
$$\lf\|\lf\{\sum_{j\in\nn}\lf[\frac{|\lz_j|\chi_{2^kQ_j}}{\|\chi_{Q_j}\|_{\vlp}}
\r]^{p_-}\r\}^{1/p_-}\r\|_\vlp\le
C2^{kn/r}\ca(\{\lz_j\}_{j\in\nn},\{Q_j\}_{j\in\nn}).$$
\end{remark}

The following lemma is just \cite[Lemma 2.6]{zyl14}.
\begin{lemma}\label{l-bigsball}
Let $p(\cdot)\in C^{\log}(\rn)$.
Then there exists a positive
constant $C$ such that, for all cubes $Q_1$ and $Q_2$ of $\rn$ with $Q_1\st Q_2$,
\begin{equation*}
C^{-1}\lf(\frac{|Q_1|}{|Q_2|}\r)^{1/p_-}
\le\frac{\|\chi_{Q_1}\|_{\vlp}}{\|\chi_{Q_2}\|_{\vlp}}
\le C\lf(\frac{|Q_1|}{|Q_2|}\r)^{1/p_+},
\end{equation*}
where $p_-$ and $p_+$ are as in \eqref{2.1x}.
\end{lemma}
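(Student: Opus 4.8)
The plan is to derive both inequalities from the basic scaling properties of the Luxemburg quasi-norm together with the globally $\log$-H\"older continuity of $p(\cdot)$. First I would reduce to the normalization $\|\chi_{Q_2}\|_{\vlp}=1$, which by the definition of the Luxemburg quasi-norm means $\varrho_{p(\cdot)}(\chi_{Q_2})=\int_{Q_2}1\,dx=|Q_2|$ is comparable to $1$ in the sense that $\|\chi_{Q_2}\|_{\vlp}$ is the unique $\lz$ with $\int_{Q_2}\lz^{-p(x)}\,dx=1$; so understanding $\|\chi_Q\|_{\vlp}$ amounts to understanding the function $\lz\mapsto\int_Q\lz^{-p(x)}\,dx$. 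The key elementary observation is that for $\lz\in(0,1]$ one has $\lz^{-p_+}\le\lz^{-p(x)}\le\lz^{-p_-}$, while for $\lz\in[1,\fz)$ the inequalities reverse; this pins down $\|\chi_Q\|_{\vlp}$ between $|Q|^{1/p_+}$ and $|Q|^{1/p_-}$ up to constants, but such crude two-sided bounds alone are not quite enough to get the clean ratio estimate with the \emph{same} $Q$-independent constants when $|Q|$ ranges over all of $(0,\fz)$ — the $\log$-H\"older condition is what repairs this.

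The main step is therefore a Harnack-type estimate: I would show that there is a constant $C$, depending only on $n$ and $C_{\log}(p)$, $C_\fz$, such that for every cube $Q$,
\begin{equation*}
C^{-1}|Q|^{1/p_-}\le\|\chi_Q\|_{\vlp}\le C|Q|^{1/p_-}\qquad\text{when }|Q|\le 1,
\end{equation*}
and the analogous bound with $p_-$ replaced by $p_+$ (equivalently by $p_\fz$) when $|Q|\ge1$. For $|Q|\le1$ this follows because on such a cube the $\log$-H\"older condition forces $p(x)$ to stay within $O(1/\log(1/\ell(Q)))$ of its infimum over $Q$, and a short computation shows $|Q|^{p(x)-p_-}$ is then bounded above and below by absolute constants uniformly on $Q$; for $|Q|\ge1$ the decay condition $|p(x)-p_\fz|\le C_\fz/\log(e+|x|)$ plays the same role after covering $Q$ by unit cubes and using the first estimate on each. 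Once this uniform comparison is in hand, the lemma is immediate: given $Q_1\st Q_2$, combine the appropriate one-sided bounds for $\|\chi_{Q_1}\|_{\vlp}$ and $\|\chi_{Q_2}\|_{\vlp}$ — using $1/p_-$ for the numerator when we want the upper bound $(|Q_1|/|Q_2|)^{1/p_+}$ and $1/p_+$ in the denominator, and symmetrically for the lower bound — taking care of the three cases according to whether $|Q_1|,|Q_2|$ are $\le1$ or $\ge1$ (the case $|Q_1|\le1\le|Q_2|$ being handled by inserting a cube of unit volume between them).

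The hard part will be the bookkeeping in that uniform comparison across the regimes $|Q|\le1$ and $|Q|\ge1$, and in particular verifying that the constants produced by the $\log$-H\"older estimates genuinely do not depend on the location of $Q$ — this is exactly where the global (as opposed to merely local) $\log$-H\"older hypothesis is used. I expect that, modulo this, everything reduces to the monotonicity of $\lz\mapsto\lz^{-p(x)}$ and elementary manipulations of the modular. Since this lemma is quoted verbatim from \cite[Lemma 2.6]{zyl14}, in the present article it suffices to cite it; the sketch above indicates the route one would take to prove it from scratch.
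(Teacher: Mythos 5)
The paper does not prove this lemma: it is cited verbatim from \cite[Lemma 2.6]{zyl14}, and you correctly noted at the end that citing suffices for the article's purposes. So there is no ``paper proof'' to compare against, and the question is whether your sketch would go through.

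The overall skeleton of your sketch (reduce to a one-scale comparison $\|\chi_Q\|_{\vlp}\sim|Q|^{1/p_\ast}$ for a suitable exponent $p_\ast$, then assemble the ratio by a case analysis on $|Q_1|,|Q_2|\lessgtr1$) is indeed the standard route. But the central intermediate claim is stated incorrectly. You assert that for $|Q|\le1$ one has $C^{-1}|Q|^{1/p_-}\le\|\chi_Q\|_{\vlp}\le C|Q|^{1/p_-}$ with $p_-$ the \emph{global} essential infimum. The upper bound here is false: if, say, $p\equiv1$ near a small cube $Q$ far from where $p$ approaches $p_-<1$, then $\|\chi_Q\|_{\vlp}=|Q|$, which is much larger than $|Q|^{1/p_-}$ as $|Q|\to0$. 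The correct one-scale statement (which is what the $\log$-H\"older hypothesis delivers, cf.\ \cite[Corollary 4.5.9]{dhr11}) is $\|\chi_Q\|_{\vlp}\sim|Q|^{1/p(x_0)}$ for every $x_0\in Q$ when $|Q|\le1$, and $\|\chi_Q\|_{\vlp}\sim|Q|^{1/p_\fz}$ when $|Q|\ge1$; replacing $p(x_0)$ by $p_-$, or $p_\fz$ by $p_+$, loses the crucial fact that the exponent is the \emph{same} local value for both cubes. The assembly then works precisely because, for $Q_1\st Q_2$ with both of volume $\le1$, one may take the \emph{same} base point $x_0\in Q_1$ for both, so the ratio is $\sim(|Q_1|/|Q_2|)^{1/p(x_0)}$, which lies between $(|Q_1|/|Q_2|)^{1/p_-}$ and $(|Q_1|/|Q_2|)^{1/p_+}$ since $|Q_1|/|Q_2|\le1$ and $p_-\le p(x_0)\le p_+$; the mixed case $|Q_1|\le1\le|Q_2|$ is then handled by inserting a unit-volume cube as you suggest. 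With the intermediate claim corrected in this way your argument is sound; as written, it is not.
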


We also need the following useful lemma, which is just
\cite[Lemma 4.1]{Sa13} and plays a key role in the present article.

\begin{lemma}\label{l-key}
Let $p(\cdot)\in C^{\log}(\rn)$ and $q\in[1,\fz)\cap(p_+,\fz)$,
where $p_+$ is as in \eqref{2.1x}.
Then there exists a positive constant
$C$ such that, for all sequences $\{Q_j\}_{j\in\nn}$ of cubes,
$\{\lz_j\}_{j\in\nn}\st\cc$ and functions
$\{a_j\}_{j\in\nn}$ satisfying that,  for each $j\in\nn$,
$\supp a_j\st Q_j$ and $\|a_j\|_{L^q(\rn)}\le |Q_j|^{1/q}$,
\begin{equation*}
\lf\|\lf(\sum_{j=1}^\fz|\lz_ja_j|^{\underline{p}}\r)^\frac1{\underline{p}}\r\|_{\vlp}
\le C\lf\|\lf(\sum_{j=1}^\fz|\lz_j\chi_{Q_j}|^{\underline{p}}
\r)^\frac1{\underline{p}}\r\|_{\vlp}.
\end{equation*}
\end{lemma}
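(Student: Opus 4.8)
The plan is to reduce the asserted norm inequality --- after a rescaling that places the problem in a Banach range of exponents --- to a duality estimate in $\vlp$, and the hypothesis $q\in(p_+,\fz)$ will be used at exactly one point: to guarantee that the Hardy--Littlewood maximal operator $\cm$ is bounded on a certain auxiliary variable Lebesgue space.

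First I would put $\wz p(\cdot):=p(\cdot)/\underline{p}$ and $\wz q:=q/\underline{p}$, so that $\wz p_-\ge1$, $\wz q>1$, and, crucially, $\wz q>\wz p_+=p_+/\underline{p}$ because $q>p_+$. Using the elementary identity $\|F\|_{\vlp}^{\underline{p}}=\||F|^{\underline{p}}\|_{L^{\wz p(\cdot)}(\rn)}$, the claim is equivalent to
$$
\lf\|\sum_{j\in\nn}|\lz_j|^{\underline{p}}|a_j|^{\underline{p}}\r\|_{L^{\wz p(\cdot)}(\rn)}
\ls\lf\|\sum_{j\in\nn}|\lz_j|^{\underline{p}}\chi_{Q_j}\r\|_{L^{\wz p(\cdot)}(\rn)}.
$$
Since $\wz p_-\ge1$, the space $L^{\wz p(\cdot)}(\rn)$ obeys the generalized H\"older inequality with its associate space $L^{(\wz p)'(\cdot)}(\rn)$ and the norm--conjugate formula, so I would estimate the left-hand side by testing against an arbitrary measurable $g\ge0$ with $\|g\|_{L^{(\wz p)'(\cdot)}(\rn)}\le1$. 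On each $Q_j$, H\"older's inequality with exponents $\wz q$ and $(\wz q)'$, the normalization $\||a_j|^{\underline{p}}\|_{L^{\wz q}(Q_j)}=\|a_j\|_{L^q(\rn)}^{\underline{p}}\le|Q_j|^{1/\wz q}$, and the fact that the $Q_j$-mean of $g^{(\wz q)'}$ is controlled by $\cm(g^{(\wz q)'})(x)$ for $x\in Q_j$, give $\int_{Q_j}|a_j|^{\underline{p}}g\ls\int_{Q_j}[\cm(g^{(\wz q)'})]^{1/(\wz q)'}$. Summing this against the weights $|\lz_j|^{\underline{p}}$ (and using that $a_j$ is supported in $Q_j$) and then invoking the generalized H\"older inequality once more, I would arrive at
$$
\int_\rn\lf(\sum_{j\in\nn}|\lz_j|^{\underline{p}}|a_j|^{\underline{p}}\r)g
\ls\lf\|\sum_{j\in\nn}|\lz_j|^{\underline{p}}\chi_{Q_j}\r\|_{L^{\wz p(\cdot)}(\rn)}\,
\lf\|\lf[\cm\lf(g^{(\wz q)'}\r)\r]^{1/(\wz q)'}\r\|_{L^{(\wz p)'(\cdot)}(\rn)}.
$$

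The remaining step --- the one I expect to be the only real obstacle --- is to bound the last factor above by an absolute constant. Rewriting it as $\|\cm(g^{(\wz q)'})\|_{L^{(\wz p)'(\cdot)/(\wz q)'}(\rn)}^{1/(\wz q)'}$, one needs $\cm$ to be bounded on $L^{(\wz p)'(\cdot)/(\wz q)'}(\rn)$, and this is where $q>p_+$ is decisive: since $t\mapsto t/(t-1)$ is decreasing on $(1,\fz)$ and $\wz q>\wz p_+$, we get $(\wz q)'<(\wz p_+)'=[(\wz p)'(\cdot)]_-$, so $[(\wz p)'(\cdot)/(\wz q)']_-=[(\wz p)'(\cdot)]_-/(\wz q)'>1$; moreover $1/p(\cdot)\in C^{\log}(\rn)$ forces the reciprocal of $(\wz p)'(\cdot)/(\wz q)'$ to be log-H\"older continuous. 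Hence $\cm$ is bounded on $L^{(\wz p)'(\cdot)/(\wz q)'}(\rn)$ (the maximal-operator bound cited in Remark \ref{r-vlp-m}, valid also when the upper exponent is infinite, as happens here once $\wz p_-=1$), and therefore
$$
\lf\|\lf[\cm\lf(g^{(\wz q)'}\r)\r]^{1/(\wz q)'}\r\|_{L^{(\wz p)'(\cdot)}(\rn)}
=\lf\|\cm\lf(g^{(\wz q)'}\r)\r\|_{L^{(\wz p)'(\cdot)/(\wz q)'}(\rn)}^{1/(\wz q)'}
\ls\|g\|_{L^{(\wz p)'(\cdot)}(\rn)}\le1.
$$
Taking the supremum over all admissible $g$ yields the displayed reduced inequality, and raising both sides to the power $1/\underline{p}$ completes the proof; every constant above depends only on $n$, $p(\cdot)$ and $q$, which supplies the uniform $C$ in the statement.
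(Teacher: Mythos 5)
The paper does not prove this lemma at all; it simply defers to Sawano \cite[Lemma 4.1]{Sa13}. Your argument supplies a correct, self-contained proof and it is essentially the standard one behind that cited result: rescale by $\underline{p}$ to put $\wz p(\cdot)=p(\cdot)/\underline{p}$ in the Banach range, dualize against $L^{(\wz p)'(\cdot)}(\rn)$, run H\"older at scale $\wz q$ on each $Q_j$ to replace $|a_j|^{\underline p}$ by $[\cm(g^{(\wz q)'})]^{1/(\wz q)'}$, and close the estimate with the boundedness of $\cm$ on $L^{(\wz p)'(\cdot)/(\wz q)'}(\rn)$, which is exactly where $q>p_+$ enters (to make the lower exponent exceed $1$). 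All the small points you flagged are handled correctly: the $\underline p$-power identity, the norm-conjugate formula for $\wz p_-\ge1$, the computation $[(\wz p)'(\cdot)/(\wz q)']_-=(\wz p_+)'/(\wz q)'>1$, the log-H\"older continuity of the reciprocal exponent inherited from $1/p(\cdot)\in C^{\log}(\rn)$, and the fact that the maximal-operator theorem from \cite[Theorem 4.3.8]{dhr11} permits the upper exponent to equal $\infty$ (which indeed occurs here whenever $p_-\le1$).
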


We are now ready to prove Proposition \ref{p-pi}.

\begin{proof}[Proof of Proposition \ref{p-pi}]
We first prove (i). Let $a$ be a $(p(\cdot),\fz)$-atom associated with some cube
$Q\st\rn$. Let
$$b:=C_{(\Phi,M)}\int_0^\fz t^{2(M+1)}L\Phi(t\sqrt L)(a(\cdot,t))\,\frac{dt}t,$$
where $C_{(\Phi,M)}$ is as in \eqref{2.2x}.
Then $\pi_{\Phi,L,M}(a)=L^M(b)$. By Lemma \ref{l-lemma1} and the fact that
$\supp a\st Q$, we easily know that $\supp L^kb\st \sqrt nQ$ for each
$k\in\{0,\,1\,\dots,\,M\}$. On the other hand, by \cite[Lemma 2]{cms85}
and the H\"older inequality, we find that,
for any $q\in(1,\fz)$ and $h\in L^2(Q)\cap L^{q'}(Q)$ with $q':=\frac q{q-1}$,
\begin{align*}
&\lf|\int_\rn ([\ell(Q)]^2L)^kb(x)h(x)\,dx\r|\\
&\hs\ls [\ell(Q)]^{2M}\int_\rn\int_0^{\ell(Q)}\lf|a(y,t)(t^2L)^{k+1}
\Phi(t\sqrt L)(h)(y)\r|\,\frac{dt}tdy\\
&\hs\ls [\ell(Q)]^{2M}\int_\rn\lf\{\int_{\Gamma(x)}|a(y,t)|
\lf|(t^2L)^{k+1}\Phi(t\sqrt L)h(y)\r|\,\frac{dydt}{t^{n+1}}\r\}dx\\
&\hs\ls [\ell(Q)]^{2M}\|a\|_{T_2^q(\urn)}\lf\|\wz S_L^k(h)\r\|_{L^{q'}(\rn)}
\ls\frac{[\ell(Q)]^{2M+n/q}}{\|\chi_Q\|_{\vlp}}\|h\|_{L^{q'(\rn)}},
\end{align*}
where
$$\wz S_L^k(h)(x):=\lf\{\int_{\Gamma(x)}\lf|(t^2L)^{k+1}\Phi(t\sqrt L)h(y)\r|^2\,
\frac{dydt}{t^{n+1}}\r\}^{1/2}$$
with $\Gamma(x)$ as in \eqref{1.2x},
which is bounded on $L^r(\rn)$ with $r\in(1,\fz)$
(see, for example, \cite[Lemma 5.3]{bckyy}).
Therefore, $\pi_{\Phi,L,M}(a)$ is a
$(p(\cdot),\fz,M)_L$-atom up to a positive constant multiple and hence
the proof of (i) is completed.

Next, we show (ii).
Let $f\in T_{2,c}^{p(\cdot)}(\urn)$. Then, by Remark \ref{r-ten1}(i), we know
that $f\in T_{2,c}^2(\urn)$ and hence, due to Remark \ref{r-pi},
$\pi_{\Phi,L,M}$ is well defined on $T_{2,c}^{p(\cdot)}(\urn)$.
From this, combined with Lemma \ref{l-tent} and Remark \ref{r-ten1}(ii),
we deduce that $f=\sum_{j\in\nn}\lz_ja_j$ in both $T_2^{p(\cdot)}(\urn)$ and $T_2^2(\urn)$,
where $\{\lz_j\}_{j\in\nn}\st\cc$ and $\{a_j\}_{j\in\nn}$ are $(p(\cdot),\fz)$ atoms
associated cubes $\{Q_j\}_{j\in\nn}$ of $\rn$ satisfying
\begin{equation}\label{area-esti-y}
\ca(\{\lz_j\}_{j\in\nn},\{Q_j\}_{j\in\nn})\ls \|f\|_{T_2^{p(\cdot)}(\urn)};
\end{equation}
moreover
\begin{equation*}
\pi_{\Phi,L,M}(f)=\sum_{j\in\nn}\lz_j\pi_{\Phi,L,M}(a_j)=:\sum_{j\in\nn}\lz_j\az_j
\quad {\rm in}\quad L^2(\rn).
\end{equation*}
Obviously, for any $j\in\nn$, $\az_j$ is a $(p(\cdot),\fz,M)_L$-atom
up to a positive constant multiple by (i). Since $S_L$ is bounded on $L^2(\rn)$
(see \eqref{SL-bounded}),
it follows that, for almost every $x\in\rn$,
$$S_L(\pi_{\Phi,L,M}(f))(x)\le \sum_{j\in\nn}|\lz_j|S_L(\az_j)(x).$$
Thus, we have
\begin{align*}
&\lf\|S_L(\pi_{\Phi,L,M}(f))\r\|_\vlp\\
&\hs\le \lf\|\sum_{j\in\nn}|\lz_j|S_L(\alpha_j)\chi_{4\sqrt nQ_j}\r\|_\vlp
+\lf\|\sum_{j\in\nn}|\lz_j|S_L(\alpha_j)\chi_{(4\sqrt nQ_j)^\complement}\r\|_\vlp
=:{\rm I}+{\rm II}.
\end{align*}

Observe that, by \eqref{SL-bounded}, we find that, for any $q\in(1,\fz)$ and $j\in\nn$,
\begin{equation*}
\|S_L(\alpha_j)\|_{L^q(\rn)}\ls \|\alpha_j\|_{L^q(\rn)}
\ls \frac{|4\sqrt nQ_j|^{1/q}}{\|\chi_{Q_j}\|_{\vlp}}.
\end{equation*}
By this, Lemma \ref{l-key}, Remark \ref{r-hlmo} and \eqref{area-esti-y},
we conclude that
\begin{align*}
{\rm I}\ls \lf\|\lf\{\sum_{j\in\nn}\lf[\frac{|\lz_j|\chi_{4\sqrt nQ_j}}{\|\chi_{Q_j}\|_\vlp}
\r]^{p_-}\r\}^{1/p_-}\r\|_{\vlp}
\ls\ca(\{\lz_j\}_{j\in\nn},\{Q_j\}_{j\in\nn})\ls \|f\|_{T_2^{p(\cdot)}(\urn)}.
\end{align*}

To estimate II, we first claim that, for all $\delta\in(n[1/p_--1],2M)$,
$j\in\nn$ and $x\in (4\sqrt nQ_j)^\complement$,
\begin{equation}\label{area-esti0}
S_L(\alpha_j)(x)\ls \frac{[\ell(Q_j)]^{n+\delta}}{|x-x_{Q_j}|^{n+\delta}}
\frac1{\|\chi_{Q_j}\|_\vlp}.
\end{equation}
Indeed, for all $j\in\nn$ and $x\in (4\sqrt nQ_j)^\complement$,
\begin{align}\label{area-esti}
\lf[S_L(a_j)(x)\r]^2
&=\int_0^{\ell(Q_j)}\int_{|y-x|<t}\lf|t^2Le^{-t^2L}(\az_j)(y)
\r|^2\,\frac{dydt}{t^{n+1}}
+\int_{\ell(Q_j)}^\fz\int_{|y-x|<t}\cdots\\
&=:{\rm II}_1(x)+{\rm II}_2(x).\noz
\end{align}
Notice that
$t^2Le^{-t^2L}=(-r\frac{de^{-rL}}{dr})_{r=t^2}$. It follows from
\eqref{partial1} that
\begin{align}\label{area-esti-x}
{\rm II}_1(x)&=\int_0^{\ell(Q_j)}\int_{|y-x|<t}
\lf|\lf(r\frac{de^{-rL}}{dr}\r)_{r=t^2}(\az_j)(y)\r|^2\,\frac{dydt}{t^{n+1}}\\
&\ls \int_0^{\ell(Q_j)}\int_{|y-x|<t}\lf[\int_{Q_j}\frac1{t^n}e^{-\frac{|z-y|^2}{t^2}}
|\az_j(z)|\,dz\r]^2\,\frac{dydt}{t^{n+1}}\noz\\
&\ls \int_0^{\ell(Q_j)}\int_{|y-x|<t}\lf[\int_{Q_j}
\frac{t^\delta}{(t+|z-y|)^{n+\delta}}
|\az_j(z)|\,dz\r]^2\,\frac{dydt}{t^{n+1}}.\noz
\end{align}
Since, for all $x\in (4\sqrt nQ_j)^\complement$, $t\in (0,\fz)$,
$|y-x|<t$ and $z\in Q_j$, we have
\begin{equation}\label{relation1}
t+|z-y|\ge |x-z|\ge |x-x_{Q_j}|-|x_{Q_j}-z|\ge |x-x_{Q_j}|/2.
\end{equation}
By this, \eqref{area-esti-x} and the H\"older inequality, we further find that
\begin{align}\label{area-esti1}
{\rm II}_1(x)&\ls \frac{|Q_j|^{2(1-1/q)}}{|x-x_{Q_j}|^{2(n+\delta)}}
\|\az_j\|_{L^q(\rn)}^2\int_0^{\ell(Q_j)}t^{2\delta}\frac{dt}t\\
&\ls \frac{[\ell(Q_j)]^{2(n+\delta)}}{|x-x_{Q_j}|^{2(n+\delta)}}
\frac1{\|\chi_{Q_j}\|_\vlp^2}.\noz
\end{align}
On the other hand, by the proof of (i), we know that, for each $j\in\nn$,
there exists $b_j\in \cd(L^M)$ such that $\az_j=L^M(b_j)$ and
$$\|b_j\|_{L^q(\rn)}\ls [\ell(Q_j)]^{2M}|Q_j|^{1/q}
\|\chi_{Q_j}\|_{\vlp}^{-1}.$$
From this, \eqref{partial1}, \eqref{relation1} and the fact that
$$t^2L^{M+1}e^{-t^2L}=(-1)^{M+1}t^{-2M}\lf(r^{M+1}\frac{d^{M+1}e^{-rL}}{dr^{M+1}}
\r)_{r=t^2},$$
we deduce that
\begin{align}\label{area-esti2}
{\rm II}_2(x)
&\sim \int_{\ell(Q_j)}^\fz\int_{|y-x|<t}\lf|\lf(r^{M+1}\frac{d^{M+1}e^{-rL}}
{dr^{M+1}}\r)_{r=t^2}(b_j)(y)\r|^2\,\frac{dydt}{t^{n+4M+1}}\\
&\ls \int_{\ell(Q_j)}^\fz\int_{|y-x|<t}\lf[\int_{Q_j}\frac1{t^n}
e^{-\frac{|z-y|^2}{t^2}}|b_j(z)|\,dz\r]^2\frac{dydt}{t^{n+4M+1}}\noz\\
&\ls\frac{|Q_j|^{2(1-1/q)}}{|x-x_{Q_j}|^{2(n+\delta)}}
\|b_j\|_{L^q(\rn)}^2\int_{\ell(Q_j)}^\fz t^{2\delta}\frac{dt}{t^{4M+1}}\noz\\
&\ls \frac{[\ell(Q_j)]^{2(n+\delta)}}{|x-x_{Q_j}|^{2(n+\delta)}}
\frac1{\|\chi_{Q_j}\|_\vlp^2}.\noz
\end{align}

Combining \eqref{area-esti}, \eqref{area-esti1} and \eqref{area-esti2}, we conclude
that \eqref{area-esti0} holds true, which completes the proof of the above claim.

Now, let $r\in(0,p_-)$ be such that $\delta\in(n[1/r-1],2M)$.
Then, from the above claim, Remarks \ref{r-vlp} and \ref{r-hlmo}, and \eqref{area-esti-y},
we deduce that
\begin{align*}
{\rm II}&\ls\lf\|\sum_{j\in\nn}\sum_{k\in\nn}|\lz_j|S_L(\az_j)
\chi_{2^{k+2}\sqrt nQ_j\backslash(2^{k+1}\sqrt nQ_j)}\r\|_\vlp\\
&\ls \lf\{\sum_{k\in\nn}2^{-k(n+\delta)p_-}
\lf\|\sum_{j\in\nn}\lf[\frac{|\lz_j|\chi_{2^{k+2}\sqrt nQ_j}}{\|\chi_{Q_j}\|_{\vlp}}
\r]^{p_-}\r\|_{L^{{p(\cdot)}/{p_-}}(\rn)}\r\}^{1/p_-}\\
&\ls \lf\{\sum_{k\in\nn}2^{-k(n+\delta)p_-}2^{knp_-/r}
\lf\|\lf\{\sum_{j\in\nn}\lf[\frac{|\lz_j|\chi_{Q_j}}{\|\chi_{Q_j}\|_{\vlp}}
\r]^{p_-}\r\}^{1/p_-}\r\|_{\vlp}^{p_-}
\r\}^{1/p_-}\\
&\ls \|f\|_{T_2^{p(\cdot)}(\urn)}
\lf\{\sum_{k\in\nn}2^{-k(n+\delta-n/r)}\r\}^{1/p_-}
\sim \|f\|_{T_2^{p(\cdot)}(\urn)}.
\end{align*}
This, together with the estimate of I, implies that
\begin{equation}\label{SL-pi}
\|S_L(\pi_{\Phi,L,M}(f))\|_{\vlp}\ls \|f\|_{T_2^{p(\cdot)}(\urn)}
\end{equation} and hence (ii) holds true,
which completes the proof of Proposition \ref{p-pi}.
\end{proof}

We now prove Theorem \ref{t-atom-c} by using Proposition \ref{p-pi}.

\begin{proof}[Proof of Theorem \ref{t-atom-c}]
We first show
\begin{equation}\label{atom-c1}
H_{L,{\rm at},M}^{p(\cdot),q}(\rn)\st H_L^{p(\cdot)}(\rn).
\end{equation}
Let $f\in H_{L,{\rm at},M}^{p(\cdot),q}(\rn)\cap L^2(\rn)$. Then,
by Definition \ref{d-atom}, we know that $f$ has a
representation $f=\sum_{j\in\nn}\lz_j\alpha_j$ in $L^2(\rn)$,
where $\{\lz_j\}_{j\in\nn}\st\cc$ and $\{\alpha_j\}_{j\in\nn}$
are $(p(\cdot),q,M)_L$-atoms such that
$\cb(\{\lz_j\alpha_j\}_{j\in\nn})\ls \|f\|_{H_{L,{\rm at},M}^{p(\cdot),q}(\rn)}$.
By an argument similar to that used in the proof of Proposition \ref{p-pi}(ii),
we conclude that
$\|S_L(f)\|_{\vlp}\ls \|f\|_{H_{L,{\rm at},M}^{p(\cdot),q}(\rn)}$, which implies that
$$\lf[H_{L,{\rm at},M}^{p(\cdot),q}(\rn)\cap L^2(\rn)\r]\st H_L^{p(\cdot)}(\rn)$$
and hence \eqref{atom-c1} holds true by Remark \ref{r-dense}.

Conversely, we need to show
\begin{equation}\label{CRF}
H_L^{p(\cdot)}(\rn)\st H_{L,{\rm at},M}^{p(\cdot),q}(\rn).
\end{equation}
Let $f\in H_L^{p(\cdot)}(\rn)\cap L^2(\rn)$. Then, by the functional
calculi for $L$, we know that
$$f=C_{(\Phi,M)}\int_0^\fz(t^2L)^{M+1}\Phi(t\sqrt L)(t^2Le^{-t^2L}f)\,\frac{dt}t
=\pi_{(\Phi,L,M)}(t^2Le^{-t^2L}f)
\quad {\rm in}\quad L^2(\rn),$$
where $C_{(\Phi,M)}$ is as in \eqref{2.2x}.
Since $S_L(f)\in \vlp\cap L^2(\rn)$, it follows that
$t^2Le^{-t^2L}(f)\in T_2^{p(\cdot)}(\urn)\cap T_2^2(\urn)$.
Thus, from Lemma \ref{l-tent} and Proposition \ref{p-pi}(ii),
we deduce that there exist sequences
$\{\lz_j\}_{j\in\nn}\st\cc$ and $\{a_j\}_{j\in\nn}$ of
$(p(\cdot),\fz)$-atoms such that
\begin{align*}
f=\pi_{\Phi,L,M}(t^2Le^{-t^2L}f)=\sum_{j\in\nn}\lz_j\pi_{\Phi,L,M}(a_j)
=:\sum_{j\in\nn}\lz_j\az_j\  {\rm in\ both}\ L^2(\rn)\ {\rm and}\ H_L^{p(\cdot)}(\rn),
\end{align*}
and
$$\cb(\{\lz_ja_j\}_{j\in\nn})\ls \lf\|t^2Le^{-t^2L}f\r\|_{T_2^{p(\cdot)}(\urn)}
\sim\|f\|_{H_L^{p(\cdot)}(\rn)}.$$
On the other hand, by Proposition \ref{p-pi}(i), we know that, for $j\in\nn$, $\az_j$
is a $(p(\cdot),\fz,M)_L$-atom up to a positive constant multiple.
Therefore,
$f\in H_{L,{\rm at},M}^{p(\cdot),\fz}(\rn)\st H_{L,{\rm at},M}^{p(\cdot),q}(\rn)$,
which implies that \eqref{CRF} holds true and hence completes
the proof of Theorem \ref{t-atom-c}.
\end{proof}

\section{Proof of Theorem \ref{t-max-c}\label{s2-2}}
\hskip\parindent
In this section, we prove Theorem \ref{t-max-c}.
We first recall the following notion.

For a given Borel measurable function $F$ on $\urn$, the \emph{non-tangential
maximal function} of $F$ with aperture $\az\in(0,\fz)$ is defined by setting, for all $x\in\rn$,
\begin{equation}\label{11-26z}
M_\az(F)(x):=\sup_{t\in(0,\fz),\, |y-x|<\az t}|F(y,t)|.
\end{equation}

\begin{lemma}\label{l-lemma0}
Let $p(\cdot)\in C^{\log}(\rn)$ and $\az_1,\ \az_2\in(0,\fz)$. If $\lz\in(n/p_-,\fz)$,
then there exists a positive constant $C$ such that,
for any Borel measurable function $F$ on $\urn$,
\begin{equation}\label{max-f1}
\lf\|M_{\az_1}(F)\r\|_{\vlp}\le
C\lf(1+\frac{\az_1}{\az_2}\r)^{\lz}
\lf\|M_{\az_2}(F)\r\|_{\vlp}.
\end{equation}
\end{lemma}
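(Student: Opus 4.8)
The plan is to deduce \eqref{max-f1} from the boundedness of the Hardy--Littlewood maximal operator $\cm$ on a variable Lebesgue space with a rescaled exponent, via a classical cone-to-ball comparison for non-tangential maximal functions. First I would dispose of the trivial case: if $\|M_{\az_2}(F)\|_{\vlp}=\fz$, then there is nothing to prove, so I may assume $M_{\az_2}(F)\in\vlp$; since $p_-\le p(\cdot)$ almost everywhere, this forces $M_{\az_2}(F)$ to be locally $p_-$-integrable on $\rn$, which is needed to make sense of the maximal function appearing below.

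The core step is the pointwise estimate: for every $r\in(0,\fz)$ and all $x\in\rn$,
\begin{equation*}
M_{\az_1}(F)(x)\le\lf(1+\frac{\az_1}{\az_2}\r)^{n/r}
\lf[\cm\lf(\lf[M_{\az_2}(F)\r]^r\r)(x)\r]^{1/r}.
\end{equation*}
To prove this, I would fix $(y,t)\in\urn$ with $|y-x|<\az_1 t$ and observe that every $z\in B(y,\az_2 t)$ satisfies $|F(y,t)|\le M_{\az_2}(F)(z)$ by \eqref{11-26z}; then, averaging the $r$-th power over $B(y,\az_2 t)$, enlarging the domain of integration to $B(x,(\az_1+\az_2)t)\supset B(y,\az_2 t)$, and using $|B(x,(\az_1+\az_2)t)|/|B(y,\az_2 t)|=(1+\az_1/\az_2)^n$, one obtains $|F(y,t)|^r\le(1+\az_1/\az_2)^n\,\cm([M_{\az_2}(F)]^r)(x)$. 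Taking the supremum over all admissible $(y,t)$ and then an $r$-th root gives the claim.

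Next I would use the hypothesis $\lz>n/p_-$ to choose an exponent $r\in(n/\lz,p_-)$ (this interval is nonempty precisely because $\lz>n/p_-$). Then $p(\cdot)/r\in C^{\log}(\rn)$, with log-H\"older constants those of $p(\cdot)$ divided by $r$, and $(p(\cdot)/r)_-=p_-/r\in(1,\fz)$, so Remark \ref{r-vlp-m} yields the boundedness of $\cm$ on $L^{p(\cdot)/r}(\rn)$; moreover $[M_{\az_2}(F)]^r\in L^1_{\loc}(\rn)$ by the reduction above, so $\cm([M_{\az_2}(F)]^r)$ is well defined. Combining this boundedness with the elementary rescaling identity $\|g\|_{\vlp}^r=\||g|^r\|_{L^{p(\cdot)/r}(\rn)}$ and the pointwise estimate above gives
\begin{equation*}
\lf\|M_{\az_1}(F)\r\|_{\vlp}^r
\ls\lf(1+\frac{\az_1}{\az_2}\r)^{n}
\lf\|\lf[M_{\az_2}(F)\r]^r\r\|_{L^{p(\cdot)/r}(\rn)}
=\lf(1+\frac{\az_1}{\az_2}\r)^{n}\lf\|M_{\az_2}(F)\r\|_{\vlp}^r;
\end{equation*}
taking $r$-th roots and using $1+\az_1/\az_2\ge1$ together with $n/r<\lz$, so that $(1+\az_1/\az_2)^{n/r}\le(1+\az_1/\az_2)^{\lz}$, then yields \eqref{max-f1}.

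I do not expect a substantive obstacle here. The only points requiring genuine care are having strict room to pick $r$ between $n/\lz$ and $p_-$ and checking that the rescaled exponent $p(\cdot)/r$ still satisfies the log-H\"older condition with $(p(\cdot)/r)_->1$ needed to invoke Remark \ref{r-vlp-m}; both are immediate from the definitions. Everything else — the cone-to-ball covering in the pointwise estimate (which goes back to the Fefferman--Stein aperture-comparison argument) and the rescaling identity for the Luxemburg norm — is routine.
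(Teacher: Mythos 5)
Your proof is correct and follows essentially the same route as the paper: both rest on the same cone-to-ball averaging argument giving a pointwise bound of the form $M_{\az_1}(F)\ls\lf(1+\az_1/\az_2\r)^{n/r}\lf\{\cm\lf(\lf[M_{\az_2}(F)\r]^r\r)\r\}^{1/r}$, followed by the boundedness of $\cm$ on $L^{p(\cdot)/r}(\rn)$ supplied by Remark \ref{r-vlp-m}. The only difference worth noting is that the paper fixes $r=n/\lz$ and routes through the tangential maximal function $N_\lz^{\az_1}$ defined in \eqref{11-26y}, proving the slightly stronger norm inequality \eqref{max-f1-x}; that stronger statement is then reused verbatim in the proof of Lemma \ref{l-lemma2} (and in the proof of Theorem \ref{t-max-c}), so the paper's detour is not wasted work. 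Your version, which picks $r$ strictly between $n/\lz$ and $p_-$ and bounds $M_{\az_1}$ directly, is perfectly adequate for the lemma as stated, but does not hand you \eqref{max-f1-x} as a byproduct.
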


\begin{proof}
For any $\az\in(0,\fz)$ and $\lz\in(n/p_-,\fz)$, let
\begin{equation}\label{11-26y}
N_\lz^\az(F)(x):=\sup_{t\in(0,\fz),\,y\in\rn}
\lf|F(y,t)\r|\lf(1+\frac{|x-y|}{\az t}\r)^{-\lz}.\
\end{equation}
Then it is easy to see that
$M_\az(F)(x)\ls N_\lz^\az(F)(x)$ for all $x\in\rn$.

Therefore, to prove \eqref{max-f1}, it suffices to show that,
for any $\az_1,\ \az_2\in(0,\fz)$,
\begin{equation}\label{max-f1-x}
\lf\|N_\lz^{\az_1}(F)\r\|_{\vlp}
\ls \lf(1+\frac{\az_1}{\az_2}\r)^\lz
\lf\|M_{\az_2}(F)\r\|_{\vlp}.
\end{equation}
To prove this, we first notice that, for any $t\in(0,\fz)$, $x,\ y\in\rn$ and all
$z\in B(x-y,\az_2 t)$,
$$|F(x-y,t)|\le M_{\az_2}(F)(z).$$
Then, since $B(x-y,\az_2 t)\st B(x,|y|+\az_2t)$, it follows that
\begin{align*}
\lf|F(x-y,t)\r|^{\frac n\lz}
&\le \frac1{|B(x-y,\az_2t)|}
\int_{B(x,|y|+\az_2t)}\lf|M_{\az_2}(F)(z)\r|^{\frac n\lz}\,dz\\
&\le \frac{|B(x,|y|+\az_2t)|}{|B(x-y,\az_2t)|}
\cm\lf(\lf[M_{\az_2}(F)\r]^{n/\lz}\r)(x)\\
&\ls\lf(1+\frac{\az_1}{\az_2}\r)^n\lf(1+\frac{|y|}{\az_1t}\r)^n
\cm\lf(\lf[M_{\az_2}(F)\r]^{n/\lz}\r)(x).
\end{align*}
Thus, we conclude that, for all $t\in(0,\fz)$ and $x,\ y\in\rn$,
\begin{equation*}
\lf|F(x-y,t)\r|\lf(1+\frac{|y|}{\az_1t}\r)^{-\lz}
\ls \lf(1+\frac{\az_1}{\az_2}\r)^\lz
\lf\{ \cm\lf(\lf[M_{\az_2}(F)\r]^{n/\lz}\r)(x)
\r\}^{\frac \lz n},
\end{equation*}
which further implies that
\begin{equation*}
N_\lz^{\az_1}(F)(x)\ls \lf(1+\frac{\az_1}{\az_2}\r)^\lz
\lf\{ \cm\lf(\lf[M_{\az_2}(F)\r]^{n/\lz}\r)(x)
\r\}^{\frac \lz n}.
\end{equation*}
From this, Remark \ref{r-vlp-m} and the fact that
$\lz\in(n/p_-,\fz)$, we deduce that \eqref{max-f1-x} holds true, which completes
the proof of Lemma \ref{l-lemma0}.
\end{proof}

\begin{remark}
When $p(\cdot)\equiv {\rm constant}\in(0,\fz)$, Lemma \ref{l-lemma0} was
established by Calder\'on and Torchinsky in \cite[Theorem 2.3]{ct75}.
\end{remark}

By Lemma \ref{l-lemma0}, we immediate obtain the following conclusion, the details
being omitted.

\begin{corollary}\label{c-lemma3}
Let $L$ be as in Theorem \ref{t-max-c}, $p(\cdot)\in C^{\log}(\rn)$,
$\az_1,\ \az_2\in(0,\fz)$ and
$\vz\in\cs(\rr)$ be an even function with $\vz(0)=1$.
If $\lz\in(n/p_-,\fz)$, then there exists a positive constant $C$ such that,
for all $f\in L^2(\rn)$,
\begin{equation}\label{11-29}
\lf\|\vz_{L,\triangledown,\az_1}^\ast(f)\r\|_{\vlp}\le
C\lf(1+\frac{\az_1}{\az_2}\r)^{\lz}
\lf\|\vz_{L,\triangledown,\az_2}^\ast(f)\r\|_{\vlp}.
\end{equation}
\end{corollary}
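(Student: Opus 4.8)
The plan is to deduce the corollary directly from Lemma \ref{l-lemma0} by choosing the right auxiliary function $F$ on $\urn$. Given $f\in L^2(\rn)$ and $\vz\in\cs(\rr)$ even with $\vz(0)=1$, I would set, for all $(y,t)\in\urn$,
\[
F(y,t):=\vz(t\sqrt L)(f)(y).
\]
Since $\vz\in\cs(\rr)$, for each $t\in(0,\fz)$ the operator $\vz(t\sqrt L)$ has an integral kernel with Gaussian-type decay (this follows from Assumption \ref{a-a2} together with Fourier-calculus arguments as in Lemma \ref{l-lemma1}), so $(y,t)\mapsto F(y,t)$ admits a Borel measurable representative on $\urn$; we fix such a representative. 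Then, with $M_\az$ the non-tangential maximal function from \eqref{11-26z}, it is immediate from Definition \ref{d-max-f}(i) that, for every $\az\in(0,\fz)$ and $x\in\rn$,
\[
M_\az(F)(x)=\sup_{t\in(0,\fz),\,|y-x|<\az t}\lf|\vz(t\sqrt L)(f)(y)\r|=\vz_{L,\triangledown,\az}^\ast(f)(x).
\]

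With this identification in hand, I would simply apply Lemma \ref{l-lemma0} to $F$ with the parameters $\az_1$, $\az_2$ and $\lz\in(n/p_-,\fz)$, obtaining
\[
\lf\|\vz_{L,\triangledown,\az_1}^\ast(f)\r\|_{\vlp}
=\lf\|M_{\az_1}(F)\r\|_{\vlp}
\le C\lf(1+\frac{\az_1}{\az_2}\r)^{\lz}\lf\|M_{\az_2}(F)\r\|_{\vlp}
=C\lf(1+\frac{\az_1}{\az_2}\r)^{\lz}\lf\|\vz_{L,\triangledown,\az_2}^\ast(f)\r\|_{\vlp},
\]
which is exactly \eqref{11-29}. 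The quantitative factor $(1+\az_1/\az_2)^\lz$ is inherited unchanged, and the restriction $\lz\in(n/p_-,\fz)$ is precisely what makes the Fefferman--Stein-type bound (via Remark \ref{r-vlp-m}) available inside the proof of Lemma \ref{l-lemma0}.

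I do not anticipate any real obstacle here: the only point requiring a word of care is the joint measurability of $F$ on $\urn$, handled as above via the kernel bounds for $\vz(t\sqrt L)$; once that is granted, the argument is a verbatim translation between the non-tangential maximal function of $F$ and the operator-theoretic maximal function $\vz_{L,\triangledown,\az}^\ast(f)$, and the corollary follows with no further work. This is why the paper records it with the details omitted.
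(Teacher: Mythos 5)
Your proposal is correct and is exactly the argument the paper intends: identifying $F(y,t):=\vz(t\sqrt L)(f)(y)$ so that $M_\az(F)=\vz_{L,\triangledown,\az}^\ast(f)$ and then invoking Lemma \ref{l-lemma0} verbatim. This matches the paper, which states the corollary follows immediately from Lemma \ref{l-lemma0} with details omitted.
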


We also have the following technical lemma.

\begin{lemma}\label{l-lemma2}
Let $L$ be as in Theorem \ref{t-max-c}, $p(\cdot)\in C^{\log}(\rn)$,
$\psi_1,\ \psi_2\in\cs(\rr)$ be even functions with $\psi_1(0)=1=\psi_2(0)$ and
$\az_1,\ \az_2\in(0,\fz)$.
Then there exists a positive constant $C\in(0,\fz)$, depending on $\psi_1$, $\psi_2$,
$\az_1$ and $\az_2$, such that, for all $f\in L^2(\rn)$,
\begin{equation}\label{max-f0}
\lf\|(\psi_1)_{L,\triangledown,\az_1}^\ast(f)\r\|_{\vlp}\le C\lf\|(\psi_2)_{L,\triangledown,\az_2}^\ast(f)\r\|_{\vlp}.
\end{equation}
\end{lemma}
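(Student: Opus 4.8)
The plan is to reduce \eqref{max-f0} to the pointwise majorization
$$(\psi_1)_{L,\triangledown,\az_1}^\ast(f)(x)\ls\lf\{\cm\lf(\lf[(\psi_2)_{L,\triangledown,\az_2}^\ast(f)\r]^{n/\lz}\r)(x)\r\}^{\lz/n},\qquad x\in\rn,$$
for a suitably large $\lz$, and then to build a Calder\'on reproducing formula from $\psi_2$. Once this majorization is in hand, choosing $\lz>n/p_-$ makes $(p(\cdot)\lz/n)_-=p_-\lz/n>1$, so the Hardy--Littlewood maximal operator is bounded on $L^{p(\cdot)\lz/n}$ by Remark \ref{r-vlp-m}, and the homogeneity $\|h\|_{\vlp}=\|\,|h|^{n/\lz}\,\|_{L^{p(\cdot)\lz/n}}^{\lz/n}$ then gives \eqref{max-f0}. (Corollary \ref{c-lemma3} could be used to adjust apertures, but the argument below produces the apertures $\az_1,\az_2$ in the statement directly.)

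First I would fix an even $\eta\in\cs(\rr)$ supported in a suitable annulus $\{1/2<|u|<2\}$ with $\int_0^\fz\eta(u)\,\frac{du}u=1$, and set $\chi(u):=\int_{|u|}^\fz\eta(v)\,\frac{dv}v$, so that $\chi\in\cs(\rr)$ is even, $\chi(0)=1$, $\supp\chi$ is compact, and $1=\int_0^t\eta(su)\,\frac{ds}s+\chi(tu)$ for all $u\in\rr$ and $t\in(0,\fz)$. By the spectral theorem this yields, for all $f\in L^2(\rn)$ and $t\in(0,\fz)$,
$$\psi_1(t\sqrt L)(f)=\int_0^t\psi_1(t\sqrt L)\,\eta(s\sqrt L)(f)\,\frac{ds}s+(\psi_1\chi)(t\sqrt L)(f).$$
Since $\psi_2(0)=1$, there exists $\rho\in(0,1)$ with $\psi_2\neq0$ on $[-\rho,\rho]$, and, because $\eta$ and $\psi_1\chi$ are supported in fixed bounded sets, $\Lambda_1:=\eta/\psi_2((\rho/2)\,\cdot\,)$ and $\Lambda_2:=(\psi_1\chi)/\psi_2((\rho/2)\,\cdot\,)$ are again even Schwartz functions (this ``division by a rescaled copy of $\psi_2$'' is exactly the device used in the proof of \cite[Theorem 2.1.4(b)]{Gra2}). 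Hence $\eta(s\sqrt L)=\Lambda_1(s\sqrt L)\,\psi_2((\rho/2)s\sqrt L)$ and $(\psi_1\chi)(t\sqrt L)=\Lambda_2(t\sqrt L)\,\psi_2((\rho/2)t\sqrt L)$, so that $\psi_1(t\sqrt L)(f)$ is now expressed solely in terms of $\psi_2(\sigma\sqrt L)(f)$ with $\sigma\in(0,t]$.

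The technical core is a kernel estimate carrying a gain. Using Assumptions \ref{a-a1} and \ref{a-a2} together with the standard spectral-multiplier bounds for operators with Gaussian upper estimates (see, e.g., \cite{hlmmy}), for all $N,M\in\nn$ there is $C_{N,M}\in(0,\fz)$ such that
$$\lf|K_{\psi_1(t\sqrt L)\Lambda_1(s\sqrt L)}(y,z)\r|\le C_{N,M}\lf(\frac st\r)^{N}\frac1{s^n}\lf(1+\frac{|y-z|}s\r)^{-M}\qquad(0<s\le t),$$
and $|K_{\Lambda_2(t\sqrt L)}(y,z)|\le C_M\,t^{-n}(1+|y-z|/t)^{-M}$; the decisive factor $(s/t)^{N}$ appears because $\Lambda_1(s\,\cdot\,)$ is concentrated at frequencies $\sim1/s$, where $\psi_1(t\,\cdot\,)$ is evaluated at arguments $\gtrsim t/s\ge1$ and hence, being Schwartz, decays (with all derivatives) faster than any power of $t/s$. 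Next, arguing as in the proof of Lemma \ref{l-lemma0} with $F(z,\sigma):=\psi_2(\sigma\sqrt L)(f)(z)$ (so that $M_{\az_2}(F)=(\psi_2)_{L,\triangledown,\az_2}^\ast(f)$), the definition \eqref{11-26y} and the pointwise estimate behind \eqref{max-f1-x} give, for all $z\in\rn$ and $\sigma\in(0,\fz)$,
$$\lf|\psi_2(\sigma\sqrt L)(f)(z)\r|\ls\lf(1+\frac{|z-x|}\sigma\r)^{\lz}\lf\{\cm\lf(\lf[(\psi_2)_{L,\triangledown,\az_2}^\ast(f)\r]^{n/\lz}\r)(x)\r\}^{\lz/n}.$$
Plugging the last two bounds into the reproducing formula, fixing $\lz>n/p_-$ and then $M>\lz+n$, $N>\lz$, and, for $|y-x|<\az_1t$ and $s\le t$, using the crude comparison $1+|z-x|/s\ls(t/s)(1+|y-z|/t)$ to absorb the polynomial weight into the kernel decay, one sees that each of the two terms is dominated by $\{\cm([(\psi_2)_{L,\triangledown,\az_2}^\ast(f)]^{n/\lz})(x)\}^{\lz/n}$; the $s$-integral converges because $\int_0^t(s/t)^{N-\lz}\,\frac{ds}s<\fz$. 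Taking the supremum over $t\in(0,\fz)$ and $|y-x|<\az_1t$ yields the majorization above, and the reduction in the first paragraph completes the proof.

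The step I expect to be the main obstacle is organizing the reproducing formula so that only scales $\le t$ of $\psi_2$ enter (which is what makes the gain $(s/t)^{N}$ with $s\le t$ available throughout and prevents a divergent scale-integral) together with the accompanying ``division by a rescaled copy of $\psi_2$'' and the gain estimate for $K_{\psi_1(t\sqrt L)\Lambda_1(s\sqrt L)}$; note that this division needs $\psi_2$ to be invertible only near the origin, which is guaranteed by $\psi_2(0)=1$. The remaining ingredients---the Gaussian-bound spectral-multiplier kernel estimates, the Fefferman--Stein/Peetre averaging already carried out in Lemma \ref{l-lemma0}, and the variable-exponent bookkeeping via Remark \ref{r-vlp-m}---are routine.
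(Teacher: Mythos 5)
Your proposal is correct, and the end result is the same — a pointwise majorization $(\psi_1)_{L,\triangledown,\az_1}^\ast(f)(x)\ls\{\cm([(\psi_2)_{L,\triangledown,\az_2}^\ast(f)]^{n/\lz})(x)\}^{\lz/n}$ followed by boundedness of $\cm$ on $L^{p(\cdot)\lz/n}(\rn)$ — but the route to the pointwise estimate is genuinely different from the paper's. The paper first replaces $\psi_1$ by $\psi:=\psi_1-\psi_2$ so that $\psi(0)=0$, normalizes the apertures to $1$ via Corollary \ref{c-lemma3}, and then simply quotes the estimate $\psi_{L,\triangledown,1}^\ast(f)(x)\ls N^1_\lz(\psi_2(t\sqrt L))(f)(x)$ from Song--Yan \cite[(3.3) and (3.4)]{sy15}; the vanishing $\psi(0)=0$ is precisely what makes the cited Song--Yan estimate applicable, and the rest is the $N_\lz^1$-versus-$M_1$ argument already set up in Lemma \ref{l-lemma0}. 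You instead prove the pointwise bound from scratch: you construct a finite-time Calder\'on reproducing formula $1=\int_0^t\eta(su)\,\frac{ds}{s}+\chi(tu)$, divide by a rescaled $\psi_2$ (the Grafakos device) to write everything in terms of $\psi_2(\sigma\sqrt L)(f)$ with $\sigma\le t$, and exploit the Littlewood--Paley gain $(s/t)^N$ in the kernel of $\psi_1(t\sqrt L)\Lambda_1(s\sqrt L)$. Notably, because your reproducing formula yields a separate boundary term $\Lambda_2(t\sqrt L)\psi_2((\rho/2)t\sqrt L)(f)$ that you control directly, you do not need the preliminary subtraction $\psi_1\mapsto\psi_1-\psi_2$; your argument applies to $\psi_1$ itself. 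The trade-off is length: your proof is self-contained (modulo the standard spectral-multiplier kernel estimates under Gaussian bounds), while the paper's is shorter because it delegates the hard pointwise estimate to \cite{sy15}. Your kernel-gain claim and the subsequent convergence of the $s$-integral (requiring $N>\lz$ and $M>\lz+n$) check out, as does the division step (since $\eta$ and $\psi_1\chi$ are supported where $\psi_2((\rho/2)\,\cdot\,)$ is bounded away from zero), so I see no gap.
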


\begin{proof}
Let $\psi:=\psi_1-\psi_2$. Then, by Remark \ref{r-vlp}, we have
$$\lf\|(\psi_1)_{L,\triangledown,\az_1}^\ast(f)\r\|_{\vlp}
\ls \lf\|\psi_{L,\triangledown,\az_1}^\ast(f)\r\|_{\vlp}+\lf\|(\psi_2)_{L,\triangledown,\az_1}^\ast(f)\r\|_{\vlp}.$$
Thus, to prove \eqref{max-f0}, by Corollary \ref{c-lemma3}, it suffices to show that
\begin{equation}\label{max-f2}
\lf\|\psi_{L,\triangledown,\az_1}^\ast(f)\r\|_{\vlp}
\ls \lf\|(\psi_2)_{L,\triangledown,\az_2}^\ast(f)\r\|_{\vlp}.
\end{equation}
Moreover, due to \eqref{11-29}, we may assume that $\az_1=1=\az_2$.
Then, by \cite[(3.3) and (3.4)]{sy15}, we find that, for all
$\lz\in(n/p_-,\fz)$ and $x\in\rn$,
$$\psi_{L,\triangledown,1}^\ast(f)(x)
\ls N_\lz^1\lf(\psi_2(t\sqrt L)\r)(f)(x),$$
which, together with \eqref{max-f1-x}, implies that \eqref{max-f2} holds true.
This finishes the proof of Lemma \ref{l-lemma2}.
\end{proof}

We now show Theorem \ref{t-max-c}.

\begin{proof}[Proof of Theorem \ref{t-max-c}]
We first prove that, for any $q\in(1,\fz]$ and $M\in(\frac n{2}[\frac1{p_-}-1],\fz)\cap\nn$,
\begin{equation}\label{11-25y}
\lf[H_{L,{\rm at},M}^{p(\cdot),q}(\rn)\cap L^2(\rn)\r]
\st \lf[H_{L,\max}^{p(\cdot),\cf}(\rn)\cap L^2(\rn)\r].
\end{equation}
Let $f\in H_{L,{\rm at},M}^{p(\cdot),q}(\rn)\cap L^2(\rn)$.
Then $f$ has a representation:
$f=\sum_{j\in\nn}\lz_j\az_j$ in $L^2(\rn)$,
where $\{\lz_j\}_{j\in\nn}\st\cc$ and $\{\az_j\}_{j\in\nn}$ is a sequence
of $(p(\cdot),q,M)_L$-atoms associated with cubes $\{Q_j\}_{j\in\nn}$ of $\rn$
such that
$\cb(\{\lz_j\az_j\}_{j\in\nn})\ls \|f\|_{H_L^{p(\cdot)}(\rn)}$.

For any $\phi\in \cf(\rr)$ and $x\in\rn$, let
$\wz\psi(x):=[\phi(0)]^{-1}\phi(x)-e^{-x^2}$.
Then, by an argument similar to that used in the proof of \cite[(3.4)]{sy15}
(see also \cite[p.\,18]{YYs15}), we
conclude that, for any $\lz\in(0,\fz)$, there exists a positive constant $C$,
depending on $n$, $\Psi$ and $\lz$, such that, for all $\phi\in\cf(\rr)$,
\begin{align*}
\sup_{|w|<t}\int_{\urn}\lf|K_{\wz\psi(t\sqrt L)\Psi(s\sqrt L)}(x-w,z)\r|
\lf[1+\frac{|x-z|}{s}\r]^\lz\,\frac{dzds}{s}\le C,
\end{align*}
where $\Psi$ is as in Lemma \ref{l-lemma1}. From this estimate and some arguments similar
to those used in the proofs of \eqref{max-f2} and \cite[(3.3) and (3.4)]{sy15},
we deduce that
\begin{equation*}
\lf\|\sup_{\phi\in\cf(\rr)}\wz\psi_{L,\triangledown,1}^{\ast}(f)\r\|_{\vlp}
\ls \|f_{L,\triangledown}^\ast\|_\vlp,
\end{equation*}
where $f_{L,\triangledown}^\ast$ is as in Definition \ref{d-max-f}.
Since
$$\cg_{L,\triangledown}^\ast(f)
\ls \sup_{\phi\in\cf(\rr)}\wz\psi_{L,\triangledown,1}^{\ast}(f)+f_{L,\triangledown}^\ast,$$
it follows that, to prove $\|\cg_{L,\triangledown}^\ast(f)\|_{\vlp}\ls\|f\|_{H_L^{p(\cdot)}(\rn)}$,
we only need to show that
\begin{equation}\label{max-c1}
\|f_{L,\triangledown}^\ast\|_{\vlp}\ls \|f\|_{H_L^{p(\cdot)}(\rn)}.
\end{equation}

To prove this, we claim that, for any $(p(\cdot),q,M)_L$-atom
$\az$ associated with some cube $Q:=Q(x_Q,\ell(Q))\st\rn$ for some $x_Q\in\rn$ and $\ell(Q)\in(0,\fz)$
and
$x\in (4\sqrt nQ)^\complement$,
\begin{equation}\label{max-c2}
\az_{L,\triangledown}^\ast(x)\ls \frac{[\ell(Q)]^{n+\delta}}{|x-x_{Q}|^{n+\delta}}
\frac1{\|\chi_{Q}\|_\vlp},
\end{equation}
where $\delta\in (n[1/p_--1],2M)$.
If this claim holds true, then,
observing that
\begin{align*}
\|f_{L,\triangledown}^\ast\|_{\vlp}
\ls \lf\|\sum_{j\in\nn}|\lz_j|(\az_j)_{L,\triangledown}^\ast
\chi_{4\sqrt nQ_j}\r\|_\vlp
+\lf\|\sum_{j\in\nn}|\lz_j|(\az_j)_{L,\triangledown}^\ast
\chi_{(4\sqrt nQ_j)^\complement}\r\|_\vlp,
\end{align*}
by Remark \ref{r-1.7x} and some argument similar to that used in the
proof of \eqref{SL-pi}, we conclude that \eqref{max-c1} holds true.

Therefore, it remains to prove the above claim. For any given
$x\in (4\sqrt nQ)^\complement$, let
$$\az_{L,\triangledown}^{\ast,1}(x):=\sup_{t\in(0,\ell(Q)),\,|y-x|<t}
\lf|e^{-t^2L}(\az)(y)\r|$$
and
$$\az_{L,\triangledown}^{\ast,2}(x):=\sup_{t\in[\ell(Q),\fz),\,|y-x|<t}
\lf|e^{-t^2L}(\az)(y)\r|.$$
Notice that, for all $t\in(0,\fz)$, $z\in Q$ and $y\in\rn$ with $|y-x|<t$, we have
\begin{equation}\label{atom-c3}
t+|y-z|> |x-z|\ge|x-x_Q|-|z-x_Q|\ge |x-x_Q|/2.
\end{equation}
Then, by Assumption \ref{a-a2} and the H\"older inequality, we find that
\begin{align}\label{atom-c2}
\az_{L,\triangledown}^{\ast,1}(x)
&\ls\sup_{t\in(0,\ell(Q)),\,|y-x|<t}\int_\rn
\frac1{t^n}e^{-\frac{|y-z|^2}{ct^2}}|\az(z)|\,dz\\
&\ls \sup_{t\in(0,\ell(Q)),\,|y-x|<t}\int_Q
\frac{t^\delta}{(t+|y-z|)^{n+\delta}}|\az(z)|\,dz\noz\\
&\ls \frac{[\ell(Q)]^\delta}{|x-x_Q|^{n+\delta}}|Q|^{1-1/q}\|\az\|_{L^q(\rn)}
\ls\frac{[\ell(Q)]^{n+\delta}}{|x-x_Q|^{n+\delta}}\frac1{\|\chi_Q\|_{\vlp}}.\noz
\end{align}
On the other hand, letting $\az:=L^Mb$ be as in Definition \ref{d-atom},
from \eqref{partial1}, \eqref{atom-c3} and
the fact that $M>\delta/2$, we deduce that
\begin{align*}
\az_{L,\triangledown}^{\ast,2}(x)
&=\sup_{t\in[\ell(Q),\fz),\,|y-x|<t}\lf|e^{-t^2L}L^Mb(y)\r|\\
&=\sup_{t\in[\ell(Q),\fz),\,|y-x|<t}t^{-2M}\lf|(t^2L)^Me^{-t^2L}(b)(y)\r|\\
&\ls \sup_{t\in[\ell(Q),\fz),\,|y-x|<t}t^{-2M}
\int_Q\frac{t^\delta}{(t+|z-y|)^{n+\delta}}|b(z)|\,dz\\
&\ls \sup_{t\in[\ell(Q),\fz),\,|y-x|<t}\frac{t^{\delta-2M}}{|x-x_Q|^{n+\delta}}
|Q|^{1-1/q}\|b\|_{L^q(\rn)}\\
&\ls \frac{[\ell(Q)]^{n+\delta}}{|x-x_Q|^{n+\delta}}\frac1{\|\chi_Q\|_{\vlp}}.
\end{align*}
By this and \eqref{atom-c2}, we conclude that, for all $x\in(4\sqrt nQ)^\complement$,
$$\az_{L,\triangledown}^\ast(x)\le \az_{L,\triangledown}^{\ast,1}(x)
+\az_{L,\triangledown}^{\ast,2}(x)\ls
\frac{[\ell(Q)]^{n+\delta}}{|x-x_Q|^{n+\delta}}\frac1{\|\chi_Q\|_{\vlp}},$$
namely, \eqref{max-c2} holds true.
This finishes the proof of \eqref{11-25y}.

Next, we show that
\begin{equation}\label{11-25z}
\lf[H_{L,\max}^{p(\cdot),\phi,a}(\rn)\cap L^2(\rn)\r]
\st \lf[H_{L,{\rm at},M}^{p(\cdot),\fz}(\rn)\cap L^2(\rn)\r].
\end{equation}
To this end, by Lemma \ref{l-lemma2}, it suffices to prove that, if
$f\in H_{L,\max}^{p(\cdot)}(\rn)\cap L^2(\rn)$, then
$f\in H_{L,{\rm at},M}^{p(\cdot),\fz}(\rn)$ and
\begin{equation}\label{11-25u}
\|f\|_{H_{L,{\rm at},M}^{p(\cdot),\fz}(\rn)}
\ls\|f\|_{H_{L,\max}^{p(\cdot)}(\rn)}.
\end{equation}

Let $\Phi$ be a function as in Lemma \ref{l-lemma1} and,
for all $x\in\rr$, $\Psi(x):=x^{2M}\Phi(x)$. Then, by the functional calculi,
we know that there exists a constant $C_{(\Psi)}$ such that
\begin{equation*}
f=C_{(\Psi)}\int_0^\fz\Psi(t\sqrt L)t^2Le^{-t^2L}(f)\,\frac{dt}t \quad {\rm in}\quad L^2(\rn).
\end{equation*}
Define a function $\eta$ by setting,
when $x\in\rr\backslash\{0\}$,
$$\eta(x):=C_{(\Psi)}\int_1^\fz t^2x^2\Psi(tx)e^{-t^2x^2}\,\frac{dt}t$$
and $\eta(0)=1$. Then $\eta\in\cs(\rr)$ is an even function and,
for any $a,\ b\in\rr$,
$$\eta(ax)-\eta(bx)=C_{(\Psi)}\int_a^b t^2x^2\Psi(tx)e^{-t^2x^2}\,\frac{dt}t,$$
which implies that
$$C_{(\Psi)}\int_a^b \Psi(t\sqrt L)t^2Le^{-t^2L}(f)\,\frac{dt}t
=\eta(a\sqrt L)(f)-\eta(b\sqrt L)(f).$$
Let, for all $x\in\rn$,
$$\cn_L^\ast(f)(x):=\sup_{t\in(0,\fz),\, |y-x|<5\sqrt nt}\lf(\lf|t^2Le^{-t^2L}f(y)\r|
+\lf|\eta(t\sqrt L)f(y)\r|\r).$$
Then, by Lemma \ref{l-lemma2}, we know that
\begin{equation}\label{11-2y}
\|\cn_L^\ast(f)\|_\vlp\ls \lf\|f_{L,\triangledown}^\ast\r\|_{\vlp}
\sim\|f\|_{H_{L,\max}^{p(\cdot)}(\rn)}.
\end{equation}

Now, following \cite[p.\,476]{sy15}, for $i\in\zz$, let $O_i:=\{x\in\rn:\ \cn_L^\ast(f)(x)\ge 2^i\}$.
Denote by $\{Q_{i,j}\}_{j\in\nn}$ the Whitney decomposition of $O_i$.
For each $i\in\zz$ and $j\in\nn$, let
$$\wh {O_i}:=\{(x,t)\in\rn:\ B(x,4\sqrt nt)\st O_i\}$$
and
$$\wz Q_{i,j}:=\{(y,t)\in\urn:\ y+3te_0\in Q_{i,j}\},$$
here and hereafter, $e_0:=(\overbrace{1,\,\dots,\,1}^{n\,{\rm times}})\in\rn$. 
Then it is easy to prove that
$\wh {O_i}\st \bigcup_{j\in\nn}\wz Q_{i,j}$ (see \cite[p.\,476]{sy15} for more details).
Observe that, for each fixed $i\in\zz$,
when $j\neq k$, $\wz Q_{i,j}\cap \wz Q_{i,k}=\emptyset$. It follows that
$$\urn =\bigcup_{i\in\zz}\wh O_i
=\bigcup_{i\in\zz}\wh {O_i}\backslash \wh{O_{i+1}}
=\bigcup_{i\in\zz}\bigcup_{j\in\nn}T_{i,j},$$
where $T_{i,j}:=\wz Q_{i,j}\cap(\wh{O_i}\backslash \wh{O_{i+1}})$. Thus,
\begin{equation*}
f=\sum_{i\in\zz}\sum_{j\in\nn}C_{(\Psi)}\int_0^\fz
\Psi(t\sqrt L)\lf(\chi_{T_{i,j}}t^2Le^{-t^2L}(f)\r)\,\frac{dt}t
=:\sum_{i\in\zz}\sum_{j\in\nn}\lz_{i,j}\az_{i,j}
\end{equation*}
converges in $L^2(\rn)$ due to the fact that $f\in L^2(\rn)$ (see \cite[(3.11)]{sy15}),
where, for any $i\in\zz$ and $j\in\nn$, $\lz_{i,j}:=2^i\|\chi_{Q_{i,j}}\|_{\vlp}$
and $a_{i,j}:=L^M(b_{i,j})$ with
$$b_{i,j}:=\frac{C_{(\Psi)}}{\lz_{i,j}}\int_0^\fz\Psi(t\sqrt L)
\lf(\chi_{T_{i,j}}t^2Le^{-t^2L}(f)\r)\,\frac{dt}t.$$
By an argument similar to that used in \cite[pp.\,477-479]{sy15}, we find that there
exists a positive constant $\wz C$ such that,
for each $i\in\zz$ and $j\in\nn$,
$\wz Ca_{i,j}$ is a $(p(\cdot),\fz,M)_L$-atom associated with the cube $30Q_{i,j}$.
Moreover, by Lemma \ref{l-bigsball}, Remark \ref{r-hlmo} and \eqref{11-2y}, we conclude that
\begin{align*}
&\ca\lf(\{\lz_{i,j}\}_{j\in\nn},\{30Q_{i,j}\}_{j\in\nn}\r)\\
&\hs\ls\lf\|\lf\{\sum_{i\in\zz,\,j\in\nn}\lf[\frac{\lz_{i,j}\chi_{Q_{i,j}}}
{\|\chi_{Q_{i,j}}\|_\vlp}\r]^{p_-}\r\}^{\frac 1{p_-}}\r\|_\vlp
\ls \lf\|\lf\{\sum_{i\in\zz,\,j\in\nn}\lf[2^i\chi_{Q_{i,j}}\r]^{p_-}
\r\}^{\frac 1{p_-}}\r\|_{\vlp}\\
&\hs\sim \lf\|\lf\{\sum_{i\in\zz}\lf[2^i\chi_{O_{i}}\r]^{p_-}
\r\}^{\frac 1{p_-}}\r\|_{\vlp}
\sim \lf\|\lf\{\sum_{i\in\zz}\lf[2^i\chi_{O_{i}\backslash O_{i+1}}
\r]^{p_-}\r\}^{\frac 1{p_-}}\r\|_{\vlp}\\
&\hs\sim\|\cn_L^\ast(f)\|_\vlp
\ls\|f\|_{H_{L,\max}^{p(\cdot)}(\rn)}<\fz,
\end{align*}
which implies that
$f\in H_{L,\rm{at},M}^{p(\cdot),\fz}(\rn)$ and hence \eqref{11-25u} holds true.
This finishes the proof of \eqref{11-25z}.

Finally, by Lemma \ref{l-lemma2} and the definitions of
$H_{L,\max}^{p(\cdot),\phi,a}(\rn)$ and $H_{L,\max}^{p(\cdot),\cf}(\rn)$,
we immediately find that
\begin{equation*}
\lf[H_{L,\max}^{p(\cdot),\cf}(\rn)\cap L^2(\rn)\r]
\st \lf[H_{L,\max}^{p(\cdot),\phi,a}(\rn)\cap L^2(\rn)\r],
\end{equation*}
which, together with \eqref{11-25y}, \eqref{11-25z} and Remark \ref{r-atom}, implies
that, for any $q\in(1,\fz]$ and $M\in(\frac n{2}[\frac1{p_-}-1],\fz)\cap\nn$,
$$\lf[H_{L,\rm{at},M}^{p(\cdot),q}(\rn)\cap L^2(\rn)\r]
=\lf[H_{L,\max}^{p(\cdot),\phi,a}(\rn)\cap L^2(\rn)\r]
=\lf[H_{L,\max}^{p(\cdot),\cf}(\rn)\cap L^2(\rn)\r].$$
From this, Remark \ref{r-dense} and a density argument, we further deduce that the spaces
$H_{L,\rm{at},M}^{p(\cdot),q}(\rn)$, $H_{L,\max}^{p(\cdot),\phi,a}(\rn)$
and $H_{L,\max}^{p(\cdot),\cf}(\rn)$ coincide with equivalent quasi-norms,
which completes the proof of Theorem \ref{t-max-c}.
\end{proof}

\section{Proof of Theorem \ref{t-max-rad}\label{s-2.3}}
\hskip\parindent
In this section, we give the proof of Theorem \ref{t-max-rad}, via beginning with
establishing the following conclusion.

\begin{proposition}\label{p-max}
Let $L$ be as in Theorem \ref{t-max-rad} and $p(\cdot)\in C^{\log}(\rn)$.
Then there exists a positive constant $C$ such that, for all $f\in L^2(\rn)$,
\begin{equation}\label{max-f2z}
\lf\|f_{L,\triangledown}^\ast\r\|_{\vlp}
\le C\lf\|f_{L,+}^\ast\r\|_{\vlp}.
\end{equation}
\end{proposition}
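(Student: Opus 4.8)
The plan is to deduce \eqref{max-f2z} from its pointwise analogue for a suitably truncated version of $f_{L,\triangledown}^\ast$, and then to remove the truncation by a Fatou-type argument in $\vlp$. Observe first that $f_{L,+}^\ast(x)\le f_{L,\triangledown}^\ast(x)$ for every $x\in\rn$ (take $y=x$ in the supremum defining the latter), so once \eqref{max-f2z} is proved we will also have $\|f_{L,\triangledown}^\ast\|_\vlp\sim\|f_{L,+}^\ast\|_\vlp$. Fix $r\in(0,p_-)$ and a large integer $N=N(n,p(\cdot),\mu)$, and introduce the truncated non-tangential maximal function $f_{L,\triangledown}^{\ast,\ez,N}$ (this is \eqref{11-26x} below), essentially the supremum of $|e^{-t^2L}(f)(y)|$ over $(y,t)\in\Gamma(x)$ with $\ez<t<1/\ez$, weighted by the slowly varying factor $[t/(t+\ez)]^N(1+\ez|y|)^{-N}$. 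Using Assumption \ref{a-a2} together with $f\in L^2(\rn)$, the map $(y,t)\mapsto e^{-t^2L}(f)(y)=\int_\rn K_{t^2}(y,w)f(w)\,dw$ is bounded on the admissible region by a constant depending only on $\ez$ and $\|f\|_{L^2(\rn)}$, so $f_{L,\triangledown}^{\ast,\ez,N}$ is finite everywhere; moreover $f_{L,\triangledown}^{\ast,\ez,N}$ increases pointwise to $f_{L,\triangledown}^\ast$ as $\ez\downarrow0$, since the weights increase to $1$ and the admissible region exhausts $\Gamma(x)$. By the Fatou property of $\|\cdot\|_\vlp$, it therefore suffices to prove $\|f_{L,\triangledown}^{\ast,\ez,N}\|_\vlp\ls\|f_{L,+}^\ast\|_\vlp$ with the implicit constant independent of $\ez$.

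The core of the argument (the content of Lemma \ref{l-max4} below) is the pointwise bound $f_{L,\triangledown}^{\ast,\ez,N}(x)\ls[\cm((f_{L,+}^\ast)^r)(x)]^{1/r}$ for all $x\in\rn$, with constant independent of $\ez$ and $f$. Two ingredients feed into it. First, combining the Gaussian bound of Assumption \ref{a-a2} with the H\"older regularity \eqref{11-25} of Assumption \ref{a-a3} via a standard $\min$-interpolation, one obtains, for $|y_1-y_2|\le|y_2-x|/4$, a kernel estimate of the form $|K_t(y_1,x)-K_t(y_2,x)|\ls t^{-n/2}(|y_1-y_2|/\sqrt t)^{\mu/2}(1+|y_2-x|/\sqrt t)^{-(N+n+1)}$, i.e.\ H\"older continuity in the first variable with fast off-diagonal decay. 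Second, writing $e^{-t^2L}=e^{-\frac{t^2}{2}L}\circ e^{-\frac{t^2}{2}L}$ and putting $g:=e^{-\frac{t^2}{2}L}(f)$, one has $|g|\le f_{L,+}^\ast$ pointwise, while $g(w)$ is also controlled, at scale comparable to $t$, by $f_{L,\triangledown}^{\ast,\ez,N}$ evaluated at points near $w$.

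To prove the pointwise bound I fix $x$ and choose $(y,t)$ in the admissible region with the weighted value at $(y,t)$ close to $f_{L,\triangledown}^{\ast,\ez,N}(x)$, and compare $e^{-t^2L}(f)$ at $y$ with its values on $B(y,\dz t)$, noting that $B(y,\dz t)\st B(x,2t)$ since $|y-x|<t$. From $e^{-t^2L}(f)(z)-e^{-t^2L}(f)(y)=\int_\rn[K_{t^2/2}(z,w)-K_{t^2/2}(y,w)]g(w)\,dw$ and the kernel estimate above, one gets, for $z\in B(y,\dz t)$, a bound of the shape
\begin{equation*}
\lf|e^{-t^2L}(f)(z)-e^{-t^2L}(f)(y)\r|
\ls\dz^{\mu/2}\int_\rn\frac{1}{t^n}\lf(1+\frac{|y-w|}{t}\r)^{-(N+n+1)}|g(w)|\,dw .
\end{equation*}
A dichotomy then finishes the argument. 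If $\dz$ is a small universal constant and the right-hand side is at most $\frac12|e^{-t^2L}(f)(y)|$, then $|e^{-t^2L}(f)(z)|\ge\frac12|e^{-t^2L}(f)(y)|$ on $B(y,\dz t)$; averaging the $r$-th power over $B(y,\dz t)\st B(x,2t)$ and using $|e^{-t^2L}(f)|\le f_{L,+}^\ast$ yields $|e^{-t^2L}(f)(y)|\ls\dz^{-n/r}[\cm((f_{L,+}^\ast)^r)(x)]^{1/r}$, as wanted. In the complementary case the displayed integral dominates $|e^{-t^2L}(f)(y)|$; here one must estimate that integral using the second control on $g$ together with the aperture-change Lemma \ref{l-lemma0} (applied to an enlarged-aperture, or tangentially weighted, truncated maximal function), so as to again return $[\cm((f_{L,+}^\ast)^r)(x)]^{1/r}$ rather than the larger quantity $\cm(f_{L,+}^\ast)(x)$. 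Handling this complementary branch is the main obstacle of the proof; it is exactly the step in which the H\"older regularity \eqref{11-25} is used in an essential way, and where the present variable-exponent argument must depart from the approach of \cite{YYs15}, which relied on $t^{p(\cdot)}$ being a uniformly Muckenhoupt weight (see Remark \ref{r-jia1}).

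Finally, granting the pointwise bound, since $r\in(0,p_-)$ the exponent $p(\cdot)/r$ belongs to $C^{\log}(\rn)$ and satisfies $[p(\cdot)/r]_-=p_-/r>1$, so $\cm$ is bounded on $L^{p(\cdot)/r}(\rn)$ by Remark \ref{r-vlp-m}; using the identity $\|h\|_\vlp^r=\||h|^r\|_{L^{p(\cdot)/r}(\rn)}$ we obtain
\begin{equation*}
\lf\|f_{L,\triangledown}^{\ast,\ez,N}\r\|_\vlp
\ls\lf\|\lf[\cm\lf((f_{L,+}^\ast)^r\r)\r]^{1/r}\r\|_\vlp
=\lf\|\cm\lf((f_{L,+}^\ast)^r\r)\r\|_{L^{p(\cdot)/r}(\rn)}^{1/r}
\ls\lf\|(f_{L,+}^\ast)^r\r\|_{L^{p(\cdot)/r}(\rn)}^{1/r}
=\lf\|f_{L,+}^\ast\r\|_\vlp ,
\end{equation*}
uniformly in $\ez$; letting $\ez\downarrow0$ and invoking the Fatou property yields \eqref{max-f2z}. (Proposition \ref{p-max}, combined with Theorem \ref{t-max-c} and the trivial inclusion $H_{L,\max}^{p(\cdot)}(\rn)\st H_{L,\rad}^{p(\cdot)}(\rn)$, then gives Theorem \ref{t-max-rad}.)
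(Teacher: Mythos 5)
Your overall scaffold (truncate, prove a pointwise bound, bound the maximal operator on $L^{p(\cdot)/r}$, remove the truncation by Fatou) matches the paper's, but the pivotal step is misstated and not closed. You attribute to Lemma~\ref{l-max4} a \emph{global} pointwise bound $f_{L,\triangledown}^{\ast,\ez,N}(x)\ls[\cm((f_{L,+}^\ast)^r)(x)]^{1/r}$ for \emph{all} $x\in\rn$. Lemma~\ref{l-max4} does not say this: it proves the bound only for $x$ in the good set
$E=\{x:\ M_L^{\lz,\ez,N}(f)(x)\le\gz\, f_{L,\triangledown}^{\ast,\ez,N}(x)\}$,
and the hypothesis $x\in E$ is exactly what allows the absorption ``${\rm I}(x,\wz x,y_0,r,t_0)\ls r^{\mu s}\gz\,f_{L,\triangledown}^{\ast,\ez,N}(x)\cdot[\dots]^{-N}\le\frac14 f_{L,\triangledown}^{\ast,\ez,N}(x)\cdot[\dots]^{-N}$'' to succeed; without it, the difference is only controlled by the (possibly much larger) quantity $M_L^{\lz,\ez,N}(f)(x)$. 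The ``complementary branch'' you flag as the main obstacle is precisely the set $E^\complement$, and you do not in fact give an argument there; the gesture toward ``the second control on $g$ and the aperture-change lemma'' does not produce the needed pointwise estimate, and indeed the paper never proves one.

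What the paper does instead is handle $E^\complement$ at the \emph{norm} level rather than pointwise. By Lemma~\ref{l-max3} (which controls $M_L^{\lz,\ez,N}(f)$ by $\cm$ applied to $f_{L,\triangledown}^{\ast,\ez,N}$ itself) and Remark~\ref{r-vlp-m}, one gets $\|f_{L,\triangledown}^{\ast,\ez,N}\chi_{E^\complement}\|_\vlp\le\frac{C_1}{\gz}\|f_{L,\triangledown}^{\ast,\ez,N}\|_\vlp$; choosing $\gz$ large, this term is absorbed into the left-hand side, leaving only the contribution from $E$, where Lemma~\ref{l-max4} applies. That absorption step requires a priori finiteness of $\|f_{L,\triangledown}^{\ast,\ez,N}\|_\vlp$, which is the whole point of Lemma~\ref{l-max5}(i) (one must choose $N$ large depending on $f$ to make the truncated maximal function lie in $\vlp$); and the subsequent passage from the truncated bound to the untruncated one (Lemma~\ref{l-max5}(ii) and then the proof of the Proposition itself) repeats the absorption at the untruncated scale, using Lemma~\ref{l-max5}(ii) to guarantee $\|f_{L,\triangledown}^\ast\|_\vlp<\fz$ before absorbing. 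Your version omits all of this bookkeeping because it presupposes a pointwise inequality on $E^\complement$ that is not established. So the gap is real: either supply a genuine argument for the pointwise bound on $E^\complement$ (which the paper deliberately avoids, and which I do not see how to do), or replace it with the paper's two-layer norm-absorption scheme (Lemmas~\ref{l-max3}--\ref{l-max5}), which also forces you to address the a priori finiteness issues your sketch currently skips.
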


To prove Proposition \ref{p-max}, we need several auxiliary estimates.

\begin{lemma}\label{l-max2}
Let $p(\cdot)\in C^{\log}(\rn)$ and $\lz\in(n/p_-,\fz)$.
Then there exists a positive constant $C$ such that, for any measurable function
$F$ on $\urn$,
\begin{equation}\label{max-f2x}
\lf\|N_\lz^1(F)\r\|_\vlp\le C\|M_1(F)\|_{\vlp},
\end{equation}
where $N_\lz^1(F)$ and $M_1(F)$ are as in \eqref{11-26y}, respectively, \eqref{11-26z}.
\end{lemma}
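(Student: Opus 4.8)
The cleanest route is to observe that Lemma \ref{l-max2} is precisely the estimate \eqref{max-f1-x}, obtained inside the proof of Lemma \ref{l-lemma0}, specialized to $\az_1=\az_2=1$; so in principle nothing new is needed. For completeness I would nonetheless record the short self-contained argument, which is the variable-exponent version of the classical Calder\'on--Torchinsky trick: dominate $N_\lz^1(F)$ pointwise by a power of a Hardy--Littlewood maximal function of a power of $M_1(F)$, and then invoke the boundedness of $\cm$ on a variable Lebesgue space with a suitably dilated exponent.

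First I would fix $x\in\rn$ and a measurable function $F$ on $\urn$. For arbitrary $t\in(0,\fz)$ and $y\in\rn$, the definition of $M_1$ in \eqref{11-26z} gives $|F(y,t)|\le M_1(F)(z)$ for every $z\in B(y,t)$ (take the competitor $(w,s)=(y,t)$). Raising to the power $n/\lz$, averaging over $B(y,t)$, using the inclusion $B(y,t)\st B(x,|x-y|+t)$ and the identity $|B(x,|x-y|+t)|/|B(y,t)|=(1+|x-y|/t)^{n}$, I obtain
\begin{align*}
|F(y,t)|^{n/\lz}
&\le\frac1{|B(y,t)|}\int_{B(y,t)}\lf[M_1(F)(z)\r]^{n/\lz}\,dz\\
&\le\lf(1+\frac{|x-y|}{t}\r)^{n}\cm\lf(\lf[M_1(F)\r]^{n/\lz}\r)(x).
\end{align*}
Raising both sides to the power $\lz/n$ and multiplying by $(1+|x-y|/t)^{-\lz}$ cancels the geometric factor exactly, so that
$$|F(y,t)|\lf(1+\frac{|x-y|}{t}\r)^{-\lz}\le\lf\{\cm\lf(\lf[M_1(F)\r]^{n/\lz}\r)(x)\r\}^{\lz/n}.$$
Taking the supremum over all $t\in(0,\fz)$ and $y\in\rn$ as in \eqref{11-26y} then yields the pointwise bound
$$N_\lz^1(F)(x)\le\lf\{\cm\lf(\lf[M_1(F)\r]^{n/\lz}\r)(x)\r\}^{\lz/n}.$$

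To finish, I would pass to $\vlp$-quasi-norms. By the homogeneity of the Luxemburg quasi-norm, $\|g^{\lz/n}\|_{\vlp}=\|g\|_{L^{(\lz/n)p(\cdot)}(\rn)}^{\lz/n}$ for every non-negative measurable $g$. Since $\lz>n/p_-$, the exponent $q(\cdot):=(\lz/n)p(\cdot)$ still lies in $C^{\log}(\rn)$ and has lower index $q_-=(\lz/n)p_->1$, so $\cm$ is bounded on $L^{q(\cdot)}(\rn)$ by Remark \ref{r-vlp-m} (and $[M_1(F)]^{n/\lz}\in L^1_{\loc}(\rn)$ whenever $M_1(F)\in\vlp$, so the maximal function is well defined, the case $\|M_1(F)\|_{\vlp}=\fz$ being trivial). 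Combining these facts with the pointwise estimate above gives
$$\lf\|N_\lz^1(F)\r\|_{\vlp}\ls\lf\|\cm\lf(\lf[M_1(F)\r]^{n/\lz}\r)\r\|_{L^{q(\cdot)}(\rn)}^{\lz/n}\ls\lf\|\lf[M_1(F)\r]^{n/\lz}\r\|_{L^{q(\cdot)}(\rn)}^{\lz/n}=\|M_1(F)\|_{\vlp},$$
which is \eqref{max-f2x}. There is no genuine obstacle here: the only points requiring (routine) care are that the dilated exponent $(\lz/n)p(\cdot)$ remains globally $\log$-H\"older continuous with lower index above $1$, so that the variable-exponent maximal inequality of Remark \ref{r-vlp-m} applies, and the power-scaling identity for the Luxemburg quasi-norm; both are immediate from the definitions.
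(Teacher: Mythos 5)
Your argument is correct and is essentially identical to the paper's: both proofs establish the pointwise bound $N_\lz^1(F)(x)\le\{\cm([M_1(F)]^{n/\lz})(x)\}^{\lz/n}$ by averaging $[M_1(F)]^{n/\lz}$ over a ball and comparing its measure to that of an enclosing ball centered at $x$, and then conclude via Remark \ref{r-vlp-m} applied to the dilated exponent $(\lz/n)p(\cdot)$, which has lower index exceeding $1$ precisely because $\lz>n/p_-$. Your observation that the statement is also the $\az_1=\az_2=1$ specialization of \eqref{max-f1-x} inside the proof of Lemma \ref{l-lemma0} is apt, and the extra explicit mention of the Luxemburg-norm power-scaling identity is a harmless elaboration of a step the paper leaves implicit.
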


\begin{proof}
To prove this lemma, it suffices to show that, for all $x\in\rn$,
\begin{equation}\label{max-f2y}
N_\lz^1(F)(x)\le \lf\{\cm\lf(\lf[M_1(F)\r]^{n/\lz}\r)(x)
\r\}^{\lz/n},
\end{equation}
where $\cm$ denotes the Hardy-Littlewood maximal function defined in Remark \ref{r-1.7x}.
Indeed, if \eqref{max-f2y} is proved, then, by Remark \ref{r-vlp-m} and the fact that
$\lz\in(n/p_-,\fz)$, we find that \eqref{max-f2x} holds true.

Next we show \eqref{max-f2y}. By the definition of $M_1(F)$,
we know that, for any $t\in(0,\fz)$, $x,\ y\in\rn$ and $z\in B(x-y,t)$,
$\lf|F(x-y,t)\r|\le M_1(F)(z)$.
From this and the fact that $B(x-y,t)\st B(x,|y|+t)$,
we deduce that
\begin{align*}
\lf|F(x-y,t)\r|^{n/\lz}
&\le\frac1{|B(x-y,t)|}\int_{B(x,|y|+t)}\lf[M_1(F)(z)
\r]^{n/\lz}\,dz\\
&\le \lf(1+\frac{|y|}t\r)^n\cm\lf(\lf[M_1(F)
\r]^{n/\lz}\r)(x),
\end{align*}
which further implies that \eqref{max-f2y} holds true. This finishes the proof of
Lemma \ref{l-max2}.
\end{proof}

For any $\ez,\ N\in(0,\fz)$, $f\in L^2(\rn)$ and $x\in\rn$, let
\begin{equation*}
f_{L,+}^{\ast,\ez,N}(x):=\sup_{t\in(0,\fz)}\lf|e^{-t^2L}(f)(x)\r|
\frac{t^N}{[(t+\ez)(1+\ez|x|)]^N},
\end{equation*}
\begin{equation}\label{11-26x}
f_{L,\triangledown}^{\ast,\ez,N}(x):=\sup_{t\in(0,1/\ez),\,|x-y|<t}
\lf|e^{-t^2L}(f)(y)\r|
\frac{t^N}{[(t+\ez)(1+\ez|y|)]^N}
\end{equation}
and, for all $\lz\in(0,\fz)$,
\begin{equation*}
M_{L}^{\lz,\ez,N}(f)(x):=\sup_{t\in(0,1/\ez),\,y\in\rn}
\lf|e^{-t^2L}(f)(y)\r|\lf(1+\frac{|x-y|}t\r)^{-\lz}\frac{t^N}{[(t+\ez)(1+\ez|y|)]^N}.
\end{equation*}

By an argument similar to that used in the proof of \eqref{max-f2y}, we obtain the
following conclusion, the details being omitted.

\begin{lemma}\label{l-max3}
Let $L$ be as in Theorem \ref{t-max-rad} and $p(\cdot)\in C^{\log}(\rn)$.
Suppose that $\lz\in(0,\fz)$ and $\phi\in\cs(\rr)$ is an
even function with $\phi(0)=1$. Then it holds true that,
for all $\ez,\ N\in(0,\fz)$, $f\in L^2(\rn)$ and $x\in\rn$,
\begin{equation*}
M_{L}^{\lz,\ez,N}(f)(x)\le
\lf\{\cm\lf(\lf[f_{L,\triangledown}^{\ast,\ez,N}\r]^{n/\lz}\r)(x)
\r\}^{\lz/n}.
\end{equation*}
\end{lemma}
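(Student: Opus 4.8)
\textbf{Proof proposal for Lemma \ref{l-max3}.}
The plan is to reduce Lemma \ref{l-max3} to the pointwise estimate \eqref{max-f2y} already obtained in the proof of Lemma \ref{l-max2}, applied to one concrete measurable function on $\urn$. Given $f\in L^2(\rn)$ and $\ez,\,N\in(0,\fz)$, I would define $F$ on $\urn$ by setting, for all $(y,t)\in\urn$,
\begin{equation*}
F(y,t):=e^{-t^2L}(f)(y)\,\frac{t^N}{[(t+\ez)(1+\ez|y|)]^N}\,\chi_{(0,1/\ez)}(t).
\end{equation*}
By Assumption \ref{a-a2}, the map $(y,t)\mapsto e^{-t^2L}(f)(y)$ is (jointly) measurable on $\urn$, and the remaining factor is a bounded measurable function of $(y,t)$; hence $F$ is measurable on $\urn$, and Lemma \ref{l-max2}, in particular its pointwise ingredient \eqref{max-f2y}, legitimately applies to $F$.

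Next I would unwind the definitions. Since $F(y,t)=0$ whenever $t\geq1/\ez$, the suprema in \eqref{11-26y} and \eqref{11-26z} that define $N_\lz^1(F)$ and $M_1(F)$ effectively range only over $t\in(0,1/\ez)$, so a direct comparison with \eqref{11-26x} and with the definition of $M_{L}^{\lz,\ez,N}(f)$ gives
\begin{equation*}
N_\lz^1(F)(x)=M_{L}^{\lz,\ez,N}(f)(x)
\quad\text{and}\quad
M_1(F)(x)=f_{L,\triangledown}^{\ast,\ez,N}(x)
\qquad\text{for all }x\in\rn.
\end{equation*}
Substituting these two identities into \eqref{max-f2y} then yields precisely
$$M_{L}^{\lz,\ez,N}(f)(x)\le\lf\{\cm\lf(\lf[f_{L,\triangledown}^{\ast,\ez,N}\r]^{n/\lz}\r)(x)\r\}^{\lz/n},$$
which is the assertion. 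Equivalently, one may just repeat verbatim the short covering-ball argument from the proof of \eqref{max-f2y}: for $z\in B(x-y,t)$ one has $|F(x-y,t)|\le f_{L,\triangledown}^{\ast,\ez,N}(z)$, and averaging this over $B(x-y,t)\subset B(x,|y|+t)$ and recognizing the Hardy-Littlewood maximal function of $[f_{L,\triangledown}^{\ast,\ez,N}]^{n/\lz}$ gives the claim.

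I do not expect a genuine obstacle here. The only points needing a little care are the joint measurability of $F$ on $\urn$ (so that Lemma \ref{l-max2} is applicable) and the bookkeeping that the truncation $t\in(0,1/\ez)$ built into $f_{L,\triangledown}^{\ast,\ez,N}$ and $M_{L}^{\lz,\ez,N}(f)$ is faithfully encoded by the cut-off $\chi_{(0,1/\ez)}(t)$ in $F$; once these are in place the statement is an immediate specialization of \eqref{max-f2y}, which is presumably why the paper records it as ``an argument similar to that used in the proof of \eqref{max-f2y}''.
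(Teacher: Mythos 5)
Your reduction is correct and is essentially what the paper intends by ``an argument similar to that used in the proof of \eqref{max-f2y}'': with $F(y,t):=e^{-t^2L}(f)(y)\,\frac{t^N}{[(t+\ez)(1+\ez|y|)]^N}\,\chi_{(0,1/\ez)}(t)$ one has $N_\lz^1(F)=M_L^{\lz,\ez,N}(f)$ and $M_1(F)=f_{L,\triangledown}^{\ast,\ez,N}$, and the pointwise step \eqref{max-f2y} (which only uses $\lz\in(0,\fz)$, not $\lz>n/p_-$) gives the claim. This matches the paper's approach in both substance and detail.
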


Moreover, we have the following lemma.

\begin{lemma}\label{l-max4}
Let $L$ be as in Theorem \ref{t-max-rad} and $p(\cdot)\in C^{\log}(\rn)$.
For any $\gz,\ \lz,\ \ez,\ N\in(0,\fz)$ and $f\in L^2(\rn)$, let
$$E:=\lf\{x\in\rn:\ M_{L}^{\lz,\ez,N}(f)(x)
\le\gz f_{L,\triangledown}^{\ast,\ez,N}(x)\r\}.$$
Then there exists a positive constant $C$, independent of $\ez,\ N$ and $f$,
such that, for all $x\in E$,
\begin{equation}\label{max-f4}
f_{L,\triangledown}^{\ast,\ez,N}(x)
\le C\lf\{\cm\lf(\lf[f_{L,+}^\ast\r]^{n/\lz}\r)(x)\r\}^{\lz/n}.
\end{equation}
\end{lemma}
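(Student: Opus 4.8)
The plan is to adapt the Calder\'on subordination trick used in the proof of \cite[Theorem 2.1.4(b)]{Gra2}: exploiting that $x\in E$, one propagates a pointwise lower bound for $|e^{-t_0^2L}(f)|$ from a single near-extremal point $y_0$ to a whole ball $B(y_0,\sz t_0)$, then dominates $|e^{-t_0^2L}(f)|$ on that ball by $f_{L,+}^\ast$ and averages. Concretely, fix $x\in E$ (the case $f_{L,\triangledown}^{\ast,\ez,N}(x)=\fz$ being handled by applying the same reasoning to an arbitrary level in place of the right-hand side below). By the definition of $f_{L,\triangledown}^{\ast,\ez,N}(x)$ as a supremum, choose $t_0\in(0,1/\ez)$ and $y_0\in\rn$ with $|x-y_0|<t_0$ such that
\begin{equation*}
\lf|e^{-t_0^2L}(f)(y_0)\r|\,\frac{t_0^N}{[(t_0+\ez)(1+\ez|y_0|)]^N}>\frac12\, f_{L,\triangledown}^{\ast,\ez,N}(x);
\end{equation*}
since the weight here is at most $1$, this gives $f_{L,+}^\ast(y_0)\ge|e^{-t_0^2L}(f)(y_0)|>\frac12 f_{L,\triangledown}^{\ast,\ez,N}(x)$.

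The crux is the following oscillation estimate: there exists $\sz\in(0,1)$, depending only on $\gz$, $\lz$, $n$, $\mu$ and the constants in Assumptions \ref{a-a2} and \ref{a-a3} (hence \emph{not} on $\ez$, $N$ or $f$), such that, for all $z\in B(y_0,\sz t_0)$,
\begin{equation*}
\lf|e^{-t_0^2L}(f)(y_0)-e^{-t_0^2L}(f)(z)\r|\le\frac14\, f_{L,\triangledown}^{\ast,\ez,N}(x)\,\frac{[(t_0+\ez)(1+\ez|y_0|)]^N}{t_0^N}.
\end{equation*}
To prove this I would factor $e^{-t_0^2L}=e^{-(t_0^2/2)L}\circ e^{-(t_0^2/2)L}$ and write the left-hand side as $\int_\rn[\wz K(y_0,w)-\wz K(z,w)]\,e^{-(t_0^2/2)L}(f)(w)\,dw$, where $\wz K$ is the kernel of $e^{-(t_0^2/2)L}$. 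The kernel difference $\wz K(y_0,w)-\wz K(z,w)$ is estimated by combining the H\"older regularity of Assumption \ref{a-a3} with the Gaussian bound of Assumption \ref{a-a2}, which yields a bound of the form $C t_0^{-n}(|y_0-z|/t_0)^\mu\exp\{-|y_0-w|^2/(ct_0^2)\}$, while the factor $e^{-(t_0^2/2)L}(f)(w)$ is controlled, via $x\in E$, by $M_{L}^{\lz,\ez,N}(f)(x)\le\gz f_{L,\triangledown}^{\ast,\ez,N}(x)$ together with the weight defining $M_{L}^{\lz,\ez,N}$. Splitting $\rn$ into $B(y_0,t_0)$ and the dyadic annuli $\{2^{k-1}t_0\le|w-y_0|<2^kt_0\}$ ($k\in\nn$), estimating $1+\ez|w|$ and $1+|x-w|/t_0$ on each piece in terms of $2^k$ and $1+\ez|y_0|$, and absorbing the resulting series into the Gaussian decay, one reaches the displayed bound after shrinking $\sz$.

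Granting the oscillation estimate, for every $z\in B(y_0,\sz t_0)$ we combine it with the lower bound for $|e^{-t_0^2L}(f)(y_0)|$ to obtain $|e^{-t_0^2L}(f)(z)|\ge\frac14 f_{L,\triangledown}^{\ast,\ez,N}(x)$, hence $f_{L,+}^\ast(z)\ge\frac14 f_{L,\triangledown}^{\ast,\ez,N}(x)$ on that ball. Raising this to the power $n/\lz$, averaging over $B(y_0,\sz t_0)$, and using $|x-y_0|<t_0$ so that $B(y_0,\sz t_0)\st B(x,2t_0)$ with $|B(x,2t_0)|\sim\sz^{-n}|B(y_0,\sz t_0)|$, we get
\begin{equation*}
\lf[\frac14\, f_{L,\triangledown}^{\ast,\ez,N}(x)\r]^{n/\lz}\le\frac{1}{|B(y_0,\sz t_0)|}\int_{B(y_0,\sz t_0)}\lf[f_{L,+}^\ast(w)\r]^{n/\lz}\,dw\ls\cm\lf(\lf[f_{L,+}^\ast\r]^{n/\lz}\r)(x),
\end{equation*}
and \eqref{max-f4} follows by raising both sides to the power $\lz/n$.

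The step I expect to be the main obstacle is the oscillation estimate, essentially because every constant has to be kept independent of the truncation parameters $\ez$ and $N$: this forces a careful choice of the comparison radius $\sz t_0$ and a delicate accounting of the ratios of the weights $[(t+\ez)(1+\ez|\cdot|)]^N$ evaluated at $y_0$, at $z$ and at a far-off point $w$, as well as the derivation of the Gaussian-decay form of the H\"older bound in Assumption \ref{a-a3}. Once this is in hand, the remaining steps are the routine part of the Calder\'on trick.
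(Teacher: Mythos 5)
Your proposal follows essentially the same route as the paper's proof of Lemma \ref{l-max4}: pick a near-extremal $(y_0,t_0)$ for $f_{L,\triangledown}^{\ast,\ez,N}(x)$, establish the oscillation estimate by factoring $e^{-t_0^2L}=e^{-(t_0^2/2)L}\circ e^{-(t_0^2/2)L}$, interpolating the H\"older regularity of Assumption \ref{a-a3} against the Gaussian bound of Assumption \ref{a-a2}, splitting into a central ball and dyadic annuli, and controlling the inner factor via $M_L^{\lz,\ez,N}(f)(x)\le\gz f_{L,\triangledown}^{\ast,\ez,N}(x)$; then shrink the comparison radius so that $|e^{-t_0^2L}(f)|\gs f_{L,\triangledown}^{\ast,\ez,N}(x)$ on a ball near $y_0$, and average. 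This is exactly what the paper does in \eqref{max-f4u}--\eqref{max-f4w}, so the two arguments coincide.
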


\begin{proof}
Let $x$ be a given point of $E\st \rn$. Then, by the definition of $f_{L,\triangledown}^{\ast,\ez,N}(x)$,
we easily know that there exists $(y_0,t_0)\in\urn$ such that $t_0\in(0,1/\ez)$,
$|x-y_0|<t_0$ and
\begin{equation}\label{max-f4x}
f_{L,\triangledown}^{\ast,\ez,N}(x)\le 2\lf|e^{-t_0^2L}(f)(y_0)\r|
\frac{t_0^N}{[(t_0+\ez)(1+\ez|y_0|)]^N}.
\end{equation}

We claim that, for any $s\in(0,1)$, $r\in(0,\fz)$ and $\wz x\in B(y_0,rt_0)$,
\begin{align}\label{max-f4y}
{\rm I}(x,\wz x,y_0,r,t_0)
:&=\lf|e^{-t_0^2L}(f)(\wz x)-e^{-t_0^2L}(f)(y_0)\r|\\
&\ls r^{\mu s}M_{L}^{\lz,\ez,N}(f)(x)\lf[\frac{t_0}{(t_0+\ez)(1+\ez|y_0|)}\r]^{-N},\noz
\end{align}
where $\mu$ is as in Assumption \ref{a-a3}.
If this claim holds true, then,
by choosing $r$ small enough, we find that, for any $\wz x\in B(y_0,rt_0)$,
\begin{align*}
{\rm I}(x,\wz x,y_0,r,t_0)
&\ls r^{\mu s}\gz f_{L,\triangledown}^{\ast,\ez,N}(x)
\lf[\frac{t_0}{(t_0+\ez)(1+\ez|y_0|)}\r]^{-N}\\
&\le \frac14f_{L,\triangledown}^{\ast,\ez,N}(x)
\lf[\frac{t_0}{(t_0+\ez)(1+\ez|y_0|)}\r]^{-N},
\end{align*}
which, combined with \eqref{max-f4x}, implies that, for any $\wz x\in B(y_0,rt_0)$,
\begin{equation*}
\lf|e^{-t_0^2L}(f)(\wz x)\r|
\ge \frac14f_{L,\triangledown}^{\ast,\ez,N}(x)
\lf[\frac{t_0}{(t_0+\ez)(1+\ez|y_0|)}\r]^{-N}
\ge \frac14f_{L,\triangledown}^{\ast,\ez,N}(x).
\end{equation*}
Therefore, for all $x\in\rn$, we have
\begin{align*}
\lf[f_{L,\triangledown}^{\ast,\ez,N}(x)\r]^{n/\lz}
&\ls \frac1{|B(y_0,rt)|}\int_{B(y_0,rt_0)}\lf|e^{-t_0^2L}(f)(\wz x)\r|^{n/\lz}\,d\wz x\\
&\ls \lf(\frac{1+r}{r}\r)^n\frac1{|B(x,(1+r)t_0)|}
\int_{B(x,(1+r)t_0)}\lf|e^{-t_0^2L}(f)(\wz x)\r|^{n/\lz}\,d\wz x\\
&\ls \cm\lf(\lf[f_{L,+}^\ast\r]^{n/\lz}\r)(x)
\end{align*}
with $\cm$ as in Remark \ref{r-1.7x}, namely, \eqref{max-f4} holds true.

To complete the proof of Lemma \ref{l-max4}, it remains to show \eqref{max-f4y}.
By the semigroup property of $\{e^{-tL}\}_{t>0}$, we know that
\begin{align}\label{max-f4u}
{\rm I}(x,\wz x,y_0,r,t_0)
&=\lf|\int_\rn\lf[K_{{t_0^2}/2}(\wz x,z)-K_{{t_0^2}/2}(y_0,z)\r]
e^{-{t_0^2}L/2}(f)(z)\,dz\r|\\
&\le {\rm I}_0+\sum_{k=3}^\fz{\rm I}_k,\noz
\end{align}
where
$${\rm I}_0:=\int_{B(y_0,4t_0)}
\lf|K_{{t_0^2}/2}(\wz x,z)-K_{{t_0^2}/2}(y_0,z)\r|
\lf|e^{-{t_0^2}L/2}(f)(z)\r|\,dz$$
and, for each $k\in\{3,4,\dots\}$,
$${\rm I}_k:=\int_{U_k(y_0,t_0)}
\lf|K_{{t_0^2}/2}(\wz x,z)-K_{{t_0^2}/2}(y_0,z)\r|
\lf|e^{-{t_0^2}L/2}(f)(z)\r|\,dz$$
with $U_k(y_0,t_0):=B( y_0,2^kt_0)\backslash B(y_0,2^{k-1}t_0)$.

Observe that, for any $s\in(0,1)$, by Assumptions \ref{a-a1} and \ref{a-a2}, we find that,
for all $z\in\rn$,
\begin{align}\label{max-f4z}
\lf|K_{{t_0^2}/2}(\wz x,z)-K_{{t_0^2}/2}(y_0,z)\r|
\ls\frac1{t_0^n}\lf[\frac{|\wz x-y_0|}{t_0}\r]^{\mu s}
\lf[e^{-\frac{|\wz x-z|^2}{ct_0^2}}+e^{-\frac{|y_0-z|^2}{ct_0^2}}\r]^{1-s},
\end{align}
where $c$ and $\mu$ are as in Assumptions \ref{a-a2}, respectively, \ref{a-a3}.
By this and the fact that, for any $z\in B(y_0,4t_0)$,
$|x-z|\le5t_0$ and
$$\frac{1+\ez|z|}{1+\ez|y_0|}\le \frac{1+\ez|z-y_0|+\ez|y_0|}{1+\ez|y_0|}\ls 1,$$
we find that
\begin{align}\label{max-f4v}
{\rm I}_0
&\ls \int_{B(y_0,4t_0)}\frac1{t_0^n}\lf(\frac{|\wz x-y_0|}{t_0}\r)^{\mu s}
e^{-t_0^2L/2}(f)(z)\,dz\\
&\ls \frac{r^{\mu s}}{t_0^n}M_{L}^{\lz,\ez,N}(f)(x)
\int_{B(y_0,4t_0)}\lf(1+\frac{|x-z|}{t_0}\r)^\lz
\frac{[(t_0+\ez)(1+\ez|z|)]^N}{t_0^N}\,dz\noz\\
&\ls r^{\mu s}M_{L}^{\lz,\ez,N}(f)(x)
\frac{[(t_0+\ez)(1+\ez|y_0|)]^N}{t_0^N}.\noz
\end{align}

Next we deal with I$_k$ for all $k\in\{3,4,\dots\}$. Since $|\wz x-y_0|<t_0$,
it follows that, for any
$z\in U_k(y_0,t_0)$, $|\wz x-y_0|\le |y_0-z|/4$ and hence
$$|\wz x-z|\ge|z-y_0|-|y_0-\wz x|>|y_0-z|/2.$$
From this, \eqref{max-f4z} and the fact that,
for any $z\in U_k(y_0,t_0)$,
$$\frac{1+\ez|z|}{1+\ez|y_0|}\le 1+\frac{\ez|z-y_0|}{1+\ez|y_0|}\ls 2^k,$$
we deduce that
\begin{align}\label{max-f4w}
{\rm I}_k
&\ls \frac{r^{\mu s}}{t_0^n}\int_{U_k(y_0,t_0)}
e^{-\frac{|y_0-z|^2}{2ct_0^2}(1-s)}\lf|e^{-t_0^2L/2}(f)(z)\r|\,dz\\
&\ls \frac{r^{\mu s}}{t_0^n}M_L^{\lz,\ez,N}(f)(x)2^{\lz k}e^{-\beta2^{2k}}
\int_{U_k(y_0,t_0)}
\frac{[(t_0+\ez)(1+\ez|z|)]^N}{t_0^N}\,dz\noz\\
&\ls r^{\mu s}2^{k(\lz+n+N)}e^{-\beta2^{2k}}M_L^{\lz,\ez,N}(f)(x),\noz
\end{align}
where $\beta$ is a positive constant depending on $s$ and $c$.

Combining \eqref{max-f4u}, \eqref{max-f4v} and \eqref{max-f4w}, we conclude that
\eqref{max-f4y} holds true.
This finishes the proof of Lemma \ref{l-max4}.
\end{proof}

We also need the following technical lemma.

\begin{lemma}\label{l-max5}
Let $\ez\in(0,1)$ and $f\in L^2(\rn)$.

{\rm(i)} It holds true that there exists a positive constant $N$, depending on $f$, such that
$f_{L,\triangledown}^{\ast,\ez,N}\in\vlp$.

{\rm (ii)} If $f_{L,\triangledown}^\ast\in\vlp$, then
$f_{L,\triangledown}^\ast\in \vlp$.
\end{lemma}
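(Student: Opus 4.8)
The plan is to prove the two parts in turn, with (i) furnishing the a priori finiteness that powers the absorption argument in (ii). For (i), I would estimate $f_{L,\triangledown}^{\ast,\ez,N}$ separately on the ball $B(0,2/\ez)$ and on its complement. On $B(0,2/\ez)$ the truncation factor in \eqref{11-26x} is at most $1$ and the defining supremum runs over a subset of that of $f_{L,\triangledown}^\ast$, so $f_{L,\triangledown}^{\ast,\ez,N}\le f_{L,\triangledown}^\ast\le C\cm(f)$ pointwise by Remark \ref{r-1.7x}; since $\cm$ is bounded on $L^2(\rn)$ we get $\cm(f)\in L^2(\rn)$, and, because $p_+\le1<2$ and $|B(0,2/\ez)|<\fz$, the embedding of $L^2$ into $\vlp$ over a set of finite measure yields $\chi_{B(0,2/\ez)}f_{L,\triangledown}^{\ast,\ez,N}\in\vlp$. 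On $(B(0,2/\ez))^\complement$, if $|x|>2/\ez$ and $|x-y|<t<1/\ez$ then $|y|\ge|x|/2$, so the truncation factor is at most $(1+\ez|x|/2)^{-N}$; combining this with the crude bound $|e^{-t^2L}(f)(y)|\le\|K_{t^2}(y,\cdot)\|_{L^2(\rn)}\|f\|_{L^2(\rn)}\ls t^{-n/2}\|f\|_{L^2(\rn)}$ coming from Assumption \ref{a-a2} and with $t^N/(t+\ez)^N\le\min\{1,(t/\ez)^N\}$, one checks, for $N>n/2$, that $f_{L,\triangledown}^{\ast,\ez,N}(x)\ls\ez^{-n/2}\|f\|_{L^2(\rn)}(1+\ez|x|)^{-N}$ there. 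For $N$ large enough $(1+\ez|\cdot|)^{-N}\in\vlp$ (the log-H\"older decay of $p(\cdot)$ at infinity makes any $N$ with $Np_\fz>n$ admissible), so the complement part also lies in $\vlp$, and Remark \ref{r-vlp} combines the two pieces.

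For (ii) the plan is a Calder\'on-type absorption argument. Fix $\lz\in(n/p_-,\fz)$ and take $N$ as in (i). For $\gz\in(0,\fz)$ split $\rn=E\cup E^\complement$ with $E$ as in Lemma \ref{l-max4}. On $E$, Lemma \ref{l-max4} gives $f_{L,\triangledown}^{\ast,\ez,N}(x)\ls\{\cm([f_{L,+}^\ast]^{n/\lz})(x)\}^{\lz/n}$, while on $E^\complement$ the definition of $E$ together with Lemma \ref{l-max3} gives $f_{L,\triangledown}^{\ast,\ez,N}(x)<\gz^{-1}M_{L}^{\lz,\ez,N}(f)(x)\le\gz^{-1}\{\cm([f_{L,\triangledown}^{\ast,\ez,N}]^{n/\lz})(x)\}^{\lz/n}$. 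Taking $\vlp$-quasi-norms, using the $\underline{p}$-subadditivity from Remark \ref{r-vlp} and the boundedness of $g\mapsto\{\cm(g^{n/\lz})\}^{\lz/n}$ on $\vlp$ (rewrite it as $\cm$ acting on $L^{p(\cdot)\lz/n}$, whose lower index exceeds $1$ since $\lz>n/p_-$, and apply Remark \ref{r-vlp-m}), I would arrive at
\begin{equation*}
\|f_{L,\triangledown}^{\ast,\ez,N}\|_\vlp^{\underline{p}}\le C_0\|f_{L,+}^\ast\|_\vlp^{\underline{p}}+C_0\,\gz^{-\underline{p}}\,\|f_{L,\triangledown}^{\ast,\ez,N}\|_\vlp^{\underline{p}},
\end{equation*}
with $C_0$ independent of $\ez$, $N$, $\gz$ and $f$. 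Choosing $\gz$ so large that $C_0\gz^{-\underline{p}}\le1/2$ and using the finiteness $\|f_{L,\triangledown}^{\ast,\ez,N}\|_\vlp<\fz$ from (i), one absorbs the last term and gets $\|f_{L,\triangledown}^{\ast,\ez,N}\|_\vlp\ls\|f_{L,+}^\ast\|_\vlp$ uniformly in $\ez$ (and $N$). Finally, as $\ez\downarrow0$ the factor $t^N/[(t+\ez)(1+\ez|y|)]^N$ increases to $1$ and the constraint $t<1/\ez$ exhausts $(0,\fz)$, whence $f_{L,\triangledown}^{\ast,\ez,N}\uparrow f_{L,\triangledown}^\ast$ pointwise; the Fatou property of $\vlp$ then yields $\|f_{L,\triangledown}^\ast\|_\vlp\ls\|f_{L,+}^\ast\|_\vlp$, so $f_{L,+}^\ast\in\vlp$ forces $f_{L,\triangledown}^\ast\in\vlp$.

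The hard part will be the absorption step in (ii): it is legitimate only because (i) already guarantees $\|f_{L,\triangledown}^{\ast,\ez,N}\|_\vlp<\fz$, and it forces $\gz$ to be taken above the fixed operator norm coming from the Fefferman--Stein bound for $\cm$ on $L^{p(\cdot)\lz/n}$; one must verify that this norm, and all the constants hidden in Lemmas \ref{l-max3} and \ref{l-max4}, are uniform in $\ez$ and $N$, which is exactly what makes the closing limit $\ez\to0$ survive. The remaining points are routine: bookkeeping with the quasi-norm exponent $\underline{p}=\min\{1,p_-\}$ throughout, and, in (i), making precise the local embedding $L^2(B(0,2/\ez))\hookrightarrow\vlp$ and the choice of $N$ ensuring $(1+\ez|\cdot|)^{-N}\in\vlp$.
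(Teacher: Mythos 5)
Your proof of part (ii) follows the paper's absorption scheme essentially verbatim: split on the exceptional set $E$ from Lemma~\ref{l-max4}, control $E^\complement$ via Lemma~\ref{l-max3} and the boundedness of $\cm$ on $L^{p(\cdot)\lz/n}(\rn)$, absorb the small term (legitimate thanks to the a~priori finiteness from (i)), and pass to the limit $\ez\downarrow0$ by Fatou. You also correctly flag the two pitfalls the paper implicitly relies on: the uniformity of all constants in $\ez$ (and $N$), and the fact that the citation ``Lemma~\ref{l-max5}'' inside the paper's own proof of (ii) must be read as a forward reference to part~(i) supplying the finiteness needed for absorption. (Incidentally, the hypothesis in the published statement of (ii) plainly has a typo --- it should read $f_{L,+}^\ast\in\vlp$; your reading is the intended one.)

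Part (i), however, is where you depart from the paper. The paper's proof dominates $|e^{-t^2L}(f)(y)|$ by $\vz_t\ast|f|(y)$ with $\vz$ a Gaussian and then invokes the Schwartz-kernel estimate \cite[Theorem~2.3.20]{Gra1}, which produces a polynomial weight $(1+|y|)^m$ with an order $m$ (and an $l$) \emph{depending on} $f$; this is why their $N$ must be taken large in a way that depends on $f$. Your route is more elementary: a crude $L^2$ bound $|e^{-t^2L}(f)(y)|\ls t^{-n/2}\|f\|_{L^2(\rn)}$ from the Gaussian kernel and Cauchy--Schwarz on the complement of $B(0,2/\ez)$, and on $B(0,2/\ez)$ the trivial majorization $f_{L,\triangledown}^{\ast,\ez,N}\le f_{L,\triangledown}^\ast\ls\cm(f)\in L^2(\rn)$ plus the local embedding $L^2$ into $\vlp$ over a set of finite measure (using $p_+\le1<2$). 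Two small remarks: the geometric estimate $|y|\ge|x|/2$ on the complement and the bound $t^{-n/2}t^N/(t+\ez)^N\ls\ez^{-n/2}$ for $N>n/2$ both check out; and for the tail you may prefer the paper's device $(1+\ez|x|)^{-N}\ls\ez^{-N}[\cm(\chi_{B(0,1)})(x)]^{N/n}$ together with Remark~\ref{r-vlp-m}, which gives membership in $\vlp$ for any $N>n/p_-$ without having to track the behaviour of $p(\cdot)$ at infinity, although your $Np_\fz>n$ criterion is also correct under the global $\log$-H\"older condition. The net gain of your approach is that the exponent $N$ needs only to be large relative to $n$ and $p(\cdot)$, not to $f$; only the constant $C_{(f)}$ depends on $f$, through $\|f\|_{L^2(\rn)}$. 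Since part~(ii) only needs some $N$ for which the truncated maximal function is a priori finite, either version suffices, but yours is a genuine simplification of (i).
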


\begin{proof}
We first prove (i).
Let $\vz(x):=e^{-x^2/c}$ for all $x\in\rn$, where $c$ is as in Assumption \ref{a-a2}.
Then $\vz\in\cs(\rn)$ and, by Assumption \ref{a-a2}, we know that, for all $y\in\rn$,
\begin{equation*}
\lf|e^{-t^2L}(f)(y)\r|
\ls \int_{\rn}\frac1{t^n}e^{-\frac{|y-z|^2}{ct^2}}|f(z)|\,dz
\sim \vz_t\ast(|f|)(y),
\end{equation*}
where, for $t\in(0,\fz)$, $\vz_t(\cdot):=t^{-n}\vz(\frac{\cdot}{t})$,
which, combined with \cite[Theorem 2.3.20]{Gra1}, implies that there exist a
positive constant $C_{(f)}$ and integers $m$ and $l$, depending on $f$, such that,
for all $y\in\rn$,
\begin{align*}
\lf|e^{-t^2L}(f)(y)\r|
&\le C_{(f)}\sum_{\beta\in\zz_+^n,|\beta|\le l}\sup_{z\in\rn}(|y|^m+|z|^m)
|(\partial^\beta\vz_t)(z)|\\
&\le C_{(f)}\frac{(1+|y|)^m}{\min\{t^n,t^{n+l}\}}(1+t^m)
\sum_{\beta\in\zz_+^n,|\beta|\le l}
\sup_{z\in\rn}(1+|z/t|^m)|(\partial^\beta\vz)(z/t)|\\
&\le C_{(f)}(1+\ez|y|)^m\ez^{-m}(1+t^m)(t^{-n}+t^{-n-l}).
\end{align*}
From this, we further deduce that, for all $t\in(0,1/\ez)$ and $|y-x|<t$,
\begin{align*}
\lf|e^{-t^2L}(f)(y)\r|\frac{t^N}{[(t+\ez)(1+\ez|y|)]^N}
\le C_{(f)} \frac1{(1+\ez|y|)^{N-m}}\frac{1+\ez^{-m}}{\ez^{m+N/2}}
\lf(\ez^{n-N/2}+\ez^{n+l-N/2}\r),
\end{align*}
where $N$ is chosen large enough such that $N>\max\{2(n+l),m+n/p_-\}$,
which, together with the fact that $\ez\in(0,1)$ and
$1+\ez|y|\ge\frac 12(1+\ez|x|)$, implies that
\begin{equation*}
f_{L,\triangledown}^{\ast,\ez,N}(x)
\le C_{(f)}\frac1{(1+\ez|x|)^{N-m}}\frac{1+\ez^{-m}}{\ez^{m+N-n}}.
\end{equation*}
Observe that, for all $x\in\rn$,
\begin{equation*}
(1+\ez|x|)^{m-N}\le \ez^{m-N}(1+|x|)^{m-N}
\ls \ez^{m-N}\lf[\cm(\chi_{B(0,1)})(x)\r]^{(N-m)/n},
\end{equation*}
where $\cm$ denotes the Hardy-Littlewood maximal function defined in Remark \ref{r-1.7x}.
By this and Remark \ref{r-vlp-m}, we conclude that
\begin{align*}
\lf\|f_{L,\triangledown}^{\ast,\ez,N}\r\|_{\vlp}
\ls \lf\|\cm(\chi_{B(0,1)})\r\|_{L^{\frac{N-m}np(\cdot)}}^{(N-m)/n}
\ls \|\chi_{B(0,1)}\|_{\vlp}<\fz,
\end{align*}
where the implicit positive constants depend on $f,\ n,\ N$ and $\ez$.
Therefore, $f_{L,\triangledown}^{\ast,\ez,N}\in\vlp$.

Next, we show (ii).
For any $\lz\in(n/p_-,\fz)$ and $\gz\in(0,\fz)$,
let $E$ be as in Lemma \ref{l-max4}.
Then, by Lemma \ref{l-max3} and Remark \ref{r-vlp-m}, we conclude that
\begin{equation*}
\lf\|f_{L,\triangledown}^{\ast,\ez,N}\chi_{E^\complement}\r\|_\vlp
\le \frac1\gz\lf\|M_{L}^{\lz,\ez,N}(f)\r\|_\vlp
\le \frac{C_1}\gz\lf\|f_{L,\triangledown}^{\ast,\ez,N}\r\|_\vlp,
\end{equation*}
which, combined with Remark \ref{r-vlp}, implies that
\begin{align*}
\lf\|f_{L,\triangledown}^{\ast,\ez,N}\r\|_\vlp^{p_-}
&\le \lf\|f_{L,\triangledown}^{\ast,\ez,N}\chi_E\r\|_\vlp^{p_-}
+\lf(\frac{C_1}{\gz}\r)^{p_-}\lf\|f_{L,\triangledown}^{\ast,\ez,N}\r\|_\vlp^{p_-}
\end{align*}
with the positive constant $C_1$ independent of $f$.
By this, Lemma \ref{l-max5} and choosing $\gz:=2^{1/p_-}C_1$, we find that
\begin{equation*}
\lf\|f_{L,\triangledown}^{\ast,\ez,N}\r\|_\vlp
\le 2^{1/p_-}\lf\|f_{L,\triangledown}^{\ast,\ez,N}\chi_E\r\|_\vlp.
\end{equation*}
From this, Lemma \ref{l-max4} and Remark \ref{r-vlp-m}, we deduce that
\begin{align}\label{11-24x}
\lf\|f_{L,\triangledown}^{\ast,\ez,N}\r\|_\vlp
\ls \lf\|\lf\{\cm\lf([f_{L,+}^\ast]^{n/\lz}\r)\r\}^{\lz/n}\r\|_\vlp
\ls \|f_{L,+}^\ast\|_\vlp
\end{align}
with the implicit positive constants independent of $\ez$.
Notice that, for any $x\in\rn$,
\begin{equation}\label{11-24}
f_{L,\triangledown}^{\ast,\ez,N}(x)\ge \frac{2^{-N}}{(1+\ez|x|)^N}
\sup_{t\in(0,1/\ez)}\lf(\frac t{t+\ez}\r)^N
\sup_{|y-x|<t}\lf|e^{-t^2L}(f)(y)\r|
\end{equation}
and that the right hand side of \eqref{11-24} increases to
$2^{-N}f_{L,,\triangledown}^\ast(x)$ as $\ez\to0^+$, namely, $\ez\in(0,\fz)$ and
$\ez\to0$. Thus, it follows, from the Fatou lemma (see \cite[Theorem 2.61]{cfbook})
and \eqref{11-24x}, that
\begin{align}\label{11-24y}
\lf\|f_{L,\triangledown}^\ast\r\|_{\vlp}
\le 2^N\liminf_{\ez\to0^+}\|f_{L,\triangledown}^{\ast,\ez,N}\|_\vlp
\ls \|f_{L,+}^\ast\|_\vlp,
\end{align}
which implies that $f_{L,\triangledown}^{\ast}\in\vlp$ and hence
completes the proof of Lemma \ref{l-max5}.
\end{proof}

\begin{remark}
Due to \eqref{11-24y}, Proposition \ref{p-max} seems to be proved.
However, this is not the case, since the implicit
positive constant in \eqref{11-24y} depends on $N$ and hence on $f$, which is not allowed in
Proposition \ref{p-max}.
\end{remark}

Indeed, we prove Proposition \ref{p-max} by an argument similar to
that used in the proof \eqref{11-24y} and the observation that, if
$\|f_{L,+}^\ast\|_{\vlp}$ is finite, then $\|f_{L,\triangledown}^\ast\|_\vlp$
is also finite.

For an even function $\phi\in\cs(\rr)$ with $\phi(0)=1$, let, for any $\lz\in(0,\fz)$
and $x\in\rn$,
$$M_{L,\phi}^\lz(f)(x):=
N_\lz^1(\phi(t\sqrt L)(f))=\sup_{t\in(0,\fz),\, y\in\rn}\lf|\phi(t\sqrt L)(f)(y)\r|
\lf(1+\frac{|x-y|}t\r)^{-\lz}.$$
Particularly, when $\phi:=e^{-|\cdot|^2}$, we denote $M_{L,\phi}^\lz(f)$
simply by $M_L^\lz(f)$.

\begin{proof}[Proof of Proposition \ref{p-max}]
For any $\lz\in(n/p_-,\fz)$ and $\gz\in(0,\fz)$, let
$$F:=\lf\{x\in\rn:\ M_L^\lz(f)(x)\le \gz f_{L,\triangledown}^\ast(x)\r\}.$$
Then, by Lemma \ref{l-max2}, we find that
\begin{align*}
\lf\|f_{L,\triangledown}^\ast\chi_{F^\com}\r\|_\vlp
\le \frac1\gz\|M_L^\lz(f)\|_\vlp
\le \frac{C_2}\gz\lf\|f_{L,\triangledown}^\ast\r\|_\vlp,
\end{align*}
where $C_2$ is a positive constant independent of $f$.
Notice that
$$\|f_{L,\triangledown}^\ast\|_\vlp^{p_-}
\le \|f_{L,\triangledown}^\ast\chi_F\|_\vlp^{p_-}
+\|f_{L,\triangledown}^\ast\chi_{F^\com}\|_\vlp^{p_-}.$$
From this and Lemma \ref{l-max5}(ii), together with choosing $\gz:=2^{1/p_-}C_2$,
we deduce that
\begin{equation}\label{11-24z}
\|f_{L,\triangledown}^\ast\|_\vlp
\le 2^{1/p_-}\|f_{L,\triangledown}^\ast\chi_F\|_\vlp.
\end{equation}
On the other hand, by an argument similar to that used in the proof of
Lemma \ref{l-max4}, we conclude that, for all $x\in F$,
\begin{equation*}
f_{L,\triangledown}^\ast(x)\ls \lf\{\cm\lf([f_{L,+}^\ast]^{n/\lz}\r)(x)\r\}^{\lz/n}
\end{equation*}
with $\cm$ as in Remark \ref{r-1.7x}, which, combined with \eqref{11-24z} and Remark \ref{r-vlp-m}, implies that
\eqref{max-f2z} holds true. This finishes the proof of Proposition \ref{p-max}.
\end{proof}

We end this section by proving Theorem \ref{t-max-rad}.

\begin{proof}[Proof of Theorem \ref{t-max-rad}]
To show Theorem \ref{t-max-rad}, by Remark \ref{r-dense} and the definitions of
$H_{L,\max}^{p(\cdot)}(\rn)$ and $H_{L,\rad}^{p(\cdot)}(\rn)$,
we only need to prove that
\begin{equation}\label{11-25v}
\lf[H_{L,\rad}^{p(\cdot)}(\rn)\cap L^2(\rn)\r]
\st \lf[H_{L,\max}^{p(\cdot)}(\rn)\cap L^2(\rn)\r],
\end{equation}
since the inverse inclusion is obvious.

Let $f\in H_{L,\rad}^{p(\cdot)}(\rn)\cap L^2(\rn)$.
Then, by Proposition \ref{p-max}, we find that
\begin{equation*}
\|f\|_{H_{L,\max}^{p(\cdot)}(\rn)}=
\|f_{L,\triangledown}^\ast\|_\vlp \ls\|f_{L,+}^\ast\|_\vlp
\sim \|f\|_{H_{L,\rad}^{p(\cdot)}(\rn)}<\fz,
\end{equation*}
which implies that $f\in H_{L,\max}^{p(\cdot)}(\rn)\cap L^2(\rn)$
and hence \eqref{11-25v} holds true.
This finishes the proof of Theorem \ref{t-max-rad}.
\end{proof}

\bigskip

\noindent  Ciqiang Zhuo and Dachun Yang

\medskip

\noindent  School of Mathematical Sciences, Beijing Normal University,
Laboratory of Mathematics and Complex Systems, Ministry of
Education, Beijing 100875, People's Republic of China

\smallskip

\noindent {\it E-mails}: \texttt{cqzhuo@mail.bnu.edu.cn} (C. Zhuo)

\hspace{0.98cm} \texttt{dcyang@bnu.edu.cn} (D. Yang)

\end{document}